\definecolor{colorcita}{RGB}{21,86,130}
\definecolor{colorref}{RGB}{5,10,177}
\definecolor{colorweb}{RGB}{177,6,38}
\newtheorem{theorem}{Theorem}[section]
\newtheorem{lemma}[theorem]{Lemma}
\newtheorem{corollary}[theorem]{Corollary}
\theoremstyle{definition}
\newtheorem{remark}[theorem]{Remark}
\newtheorem{definition}[theorem]{Definition}
\numberwithin{equation}{section}
\newcommand{\N}{\mathbb{N}}
\newcommand{\C}{\mathbb{C}}
\newcommand{\R}{\mathbb{R}}
\newcommand{\E}{\mathbf{E}}
\renewcommand{\P}{\mathbf{P}}
\newcommand{\pp}{\mathbf{P}_n^{\beta_n}}
\newcommand{\lbld}{D_{BL}^-}
\newcommand{\ubld}{D_{BL}^+}
\newcommand{\bld}{D_{BL}}
\newcommand{\ww}{\mathcal{W}}
\newcommand{\sker}{\mathcal{S}}
\newcommand{\aker}{\mathcal{A}}
\newcommand{\eps}{\varepsilon}
\DeclareMathOperator{\V}{Var}
\DeclareMathOperator{\tr}{tr}
\DeclareMathOperator{\oo}{O}
\DeclareMathOperator{\so}{o}
\DeclareMathOperator{\ai}{Ai}
\DeclareMathOperator{\id}{id}
\DeclareMathOperator{\spa}{span}
\author{Yacin Ameur}
\address{Department of Mathematics \\ Lund University \\ 22100 Lund, Sweden}
\email{yacin.ameur@math.lu.se}
\author{Felipe Marceca}
\address{Department of Mathematics\\
	King's College London\\ WC2R 2LS, United Kingdom}
\email{felipe.marceca@kcl.ac.uk}
\author{Jos\'e Luis Romero}
\address{Faculty of Mathematics \\
	University of Vienna\\
	Oskar-Morgenstern-Platz 1,
	A-1090 and Acoustics Research Institute\\ Austrian Academy of Sciences\\ Dr. Ignaz Seipel-Platz 2,	AT-1010 Vienna, Austria}
\email{jose.luis.romero@univie.ac.at}
\title[The perfect freezing transition]
{Gaussian beta ensembles: the perfect freezing transition and its characterization in terms of Beurling-Landau densities}
\thanks{J. L. R. and F. M. gratefully acknowledge support from the Austrian Science Fund (FWF): 10.55776/Y1199. F. M. was also supported by the EPSRC: NIA EP/V002449/1.}
\subjclass[2010]{60K35, 82B26, 94A20, 31C20.}
\keywords{Gaussian ensemble, perfect freezing, separation, equidistribution, discrepancy, random quadrature.}
\begin{document}
\begin{abstract} The Gaussian $\beta$-ensemble is a real $n$-point configuration $\{x_j\}_1^n$ picked randomly with respect to the Boltzmann factor $e^{-\frac \beta 2 H_n}$, where $H_n=\sum_{i\ne j}\log\frac 1 {|x_i-x_j|}+n\sum_{i=1}^n\tfrac 12 x_i^2 .$ It is well known that the point process $\{x_j\}_1^n$ tends to follow the semicircle law $\sigma(x)=\tfrac {1} {2\pi}\sqrt{(4-x^2)_+}$ in certain average senses.
	
	A Fekete configuration (minimizer of $H_n$) is spread out in a much more uniform way in the interval $[-2,2]$ with respect to the regularization
	$\sigma_n(x)=\max\{\sigma(x),n^{-\frac 1 3}\}$ of the semicircle law. In particular, Fekete configurations are ``equidistributed'' with respect to $\sigma_n(x)$, in a certain technical sense of Beurling-Landau densities. 
	
	We consider the problem of characterizing sequences $\beta_n$ of inverse temperatures, which guarantee almost sure equidistribution as $n\to\infty$. We find that a necessary and sufficient condition is that $\beta_n$ grows at least logarithmically in $n$:
	$$\beta_n\gtrsim \log n.$$ We call this growth rate the \textit{perfect freezing regime}.
	Along the way,
	we give several further results on the distribution of particles when $\beta_n\gtrsim\log n$, for example on minimal spacing and discrepancies, and that with high probability a random sample solves certain sampling and interpolation problems for weighted polynomials. (In this context, Fekete sets correspond to $\beta\equiv \infty$.)
	
	The condition $\beta_n\gtrsim\log n$ was introduced in earlier works due to some of the authors in the context of two-dimensional Coulomb gas ensembles, where it is shown to be sufficient for equidistribution.
	Interestingly, although the technical implementation requires some considerable modifications, the strategy from dimension two adapts well to prove sufficiency also for one-dimensional Gaussian ensembles. On a technical level, we use estimates for weighted polynomials due to
	Levin, Lubinsky, Gustavsson and others. The other direction (necessity) involves estimates due to Ledoux and Rider on the distribution of particles which fall near or outside the boundary.
\end{abstract}

\maketitle

\section{Introduction}
\subsection{Background and preliminary discussion} In this article we study the standard Gaussian $\beta$-ensemble. This is the $n$-point process $\{x_j\}_{j=1}^n$ on $\mathbb{R}$ with
the joint PDF
\begin{align}
	\label{eqprob}
	p_n^{\beta}({x}_1, \ldots, {x}_n)= \frac{1}{Z_{{\beta},n}} \prod_{k< j} |{x}_k - {x}_j|^{\beta} e^{-n\frac{\beta}{2} \sum_{j=1}^n \frac  1 2 {x}_j^2}=
	\frac{1}{Z_{{\beta},n}} e^{-\frac{\beta}{2} H_n},
\end{align}
where $H_n$ is the Hamiltonian in Gaussian external potential $V(x)=\frac 1 2 x^2$, i.e.,
\begin{align}\label{ham1}H_n({x}_1, \ldots, {x}_n)= \sum_{k\ne j} \log\frac{1}{|{x}_k - {x}_j|} -  n \sum_{j=1}^n V(x_j),\qquad V(x)=\tfrac 1 2 x^2.\end{align}
We write $\P_n^\beta$ for the probability measure on $\R^n$ with density $p_n^\beta$ with respect to Lebesgue measure on $\R^n$.

It is useful to think of an $n$-point configuration $\{x_j\}_1^n$ as a system of point charges on $\R$. The first term on the right in \eqref{ham1} represents the interaction energy between pairs of particles, and the second term represents the energy coming from
interaction with the external field $nV$; the parameter $\beta>0$ has the meaning of an inverse temperature.
For $\beta=1,2,4$, we obtain respectively the classical Gaussian orthogonal, unitary and symplectic ensembles (GOE, GUE and GSE).

Energy-minimizers $\{{x}_j\}_1^n$, which render $H_n$ minimal, are known as (weighted) Fekete sets.
In a sense, such configurations correspond to the formal zero temperature limit, i.e., to $\beta=\infty$.
For the Gaussian potential, Fekete configurations are unique and coincide with the zeros of a suitably scaled $n$-th Hermite polynomial, see \cite{Ism,Meh}.

A natural counterpart of the discrete energy $H_n$ for continuous charge distributions (measures) $\mu$ on $\R$ is given by
\[I_V[\mu]=\iint_{\R^2} \log\frac{1}{|x-y|}\, d\mu(x) d\mu(y)+\int_\R V(x)\, d\mu(x),\]

A well-known theorem of potential theory (e.g.~\cite[Proposition 6.156]{De99} or \cite[Theorem~IV.5.1]{SaTo}) asserts that the semicircular probability density
\begin{align}\label{eq_sc}
	\sigma(x)= \tfrac{1}{2\pi} \sqrt{(4-x^2)_+},\qquad ((r)_+=\max\{r,0\}),
\end{align}
minimizes the energy $I_V[\mu]$ over all compactly supported Borel probability measures $\mu$ on $\mathbb{R}$.
In the context of log-gases, the support $[-2,2]$ of $\sigma$ is known as the \emph{droplet}.

For general reasons \cite{De99,SaTo} Fekete sets $\{x_j\}_1^n$ are contained in the droplet, and the probability
measures $\frac 1 n\sum_{j=1}^n \delta_{x_j}$ converge weakly to $\sigma$ as $n\to\infty$. In addition,
due to their characterization as zeros of Hermite polynomials, the position of Fekete points is known with $O(n^{-1})$ precision, see \cite{KrML}, and \cite{DKMLV,MNT,Gu}.

Now fix a finite $\beta>0$. Given a random sample $\{x_j\}_1^n$ from $\P_n^\beta$ we
denote the number of particles which fall in an interval $I$ by
\begin{align*}
	\#[I]=\#I\cap\{x_j\}_{j=1}^n.
\end{align*}
Also, we write $B(x,r)=(x-r,x+r)$ and denote the \emph{one-point intensity} by
\[R_n^\beta(x)=\lim_{\varepsilon\to 0} \frac{\E_n^\beta\#
	[B(x,\varepsilon)]}{2\varepsilon},\]
where $\E_n^\beta$ is expectation with respect to $\P_n^\beta$.

We then have the convergence (``Wigner semicircle law'')
$$\tfrac{1}{n}R_n^\beta\, dx\to d\sigma,\qquad  (n\to\infty)$$ in the weak sense of measures, see \cite[Theorem~2.1]{Jo}. In addition, the fluctuations about the equilibrium
\[\sum_{j=1}^n f(x_j)-n\int_{\mathbb{R}} f\,d\sigma,\]
have been well studied and are known to converge in distribution as $n\to\infty$
to a normal distribution with known expectation and variance if $f$ is smooth, \cite{Jo,BeLeSe,Sh,LaLeWe,F22}. These fluctuation results suggest that a random sample from $\P_n^\beta$ should tend to be, in some sense, more ``disorganized'' than Fekete sets. We shall now make this intuition more precise.

\subsection{Equidistribution and the perfect freezing regime} We shall introduce a suitable notion of equidistribution of a family of configurations,
which is strong enough to distinguish Fekete sets from Gaussian $\beta$-ensembles. In fact, we shall find that the threshold level is precisely the perfect freezing regime $\beta_n\gtrsim \log n$.

For this purpose, we start by defining the \textit{regularized semicircle density}
\begin{align}\label{eq_psi}
	\sigma_n(x)= \max\{\sigma(x),n^{-\frac 1 3}\}.
\end{align}
(The values of $\sigma_n(x)$ for $x$ outside of a vicinity of the droplet $[-2,2]$ will be irrelevant and $\sigma_n(x)$ could equally be redefined as zero there.)

The quantity \begin{equation}\label{munn}\mu_n(x)=\frac 1{n\sigma_n(x)}\end{equation} gives a suitable notion of microscopic scale in a vicinity of the droplet. We may think about $\mu_n(x)$ as an approximate
expected spacing between neighbouring particles near the point $x$.

Given an arbitrary real configuration $\{x_j\}_1^n$ and a point $x$ in the droplet, we blow up about $x$ on the $\mu_n$-scale and form the
rescaled configuration $\{\tilde{x}_j\}_1^n$ by
\begin{equation}\label{voxd}\tilde{x}_j=n\sigma_n(x)\cdot (x_j-x).\end{equation}
(This scaling is perhaps particularly natural from a point of view of the GUE, since then $n\sigma_n(x)$ is equivalent, up to constants, to the density $K_n(x,x)$. See \eqref{voxday} below.)

If $\{x_j\}_1^n$ are Fekete points, then $\{\tilde{x}_j\}_1^n$ in \eqref{voxd} are very well spread out on the varying scale $\mu_n(x)$. See Figure \ref{fig1}. (We thank S.~Byun for help with the figures.)

\begin{figure}[ht]
	\begin{center}
		\includegraphics[scale=0.11]{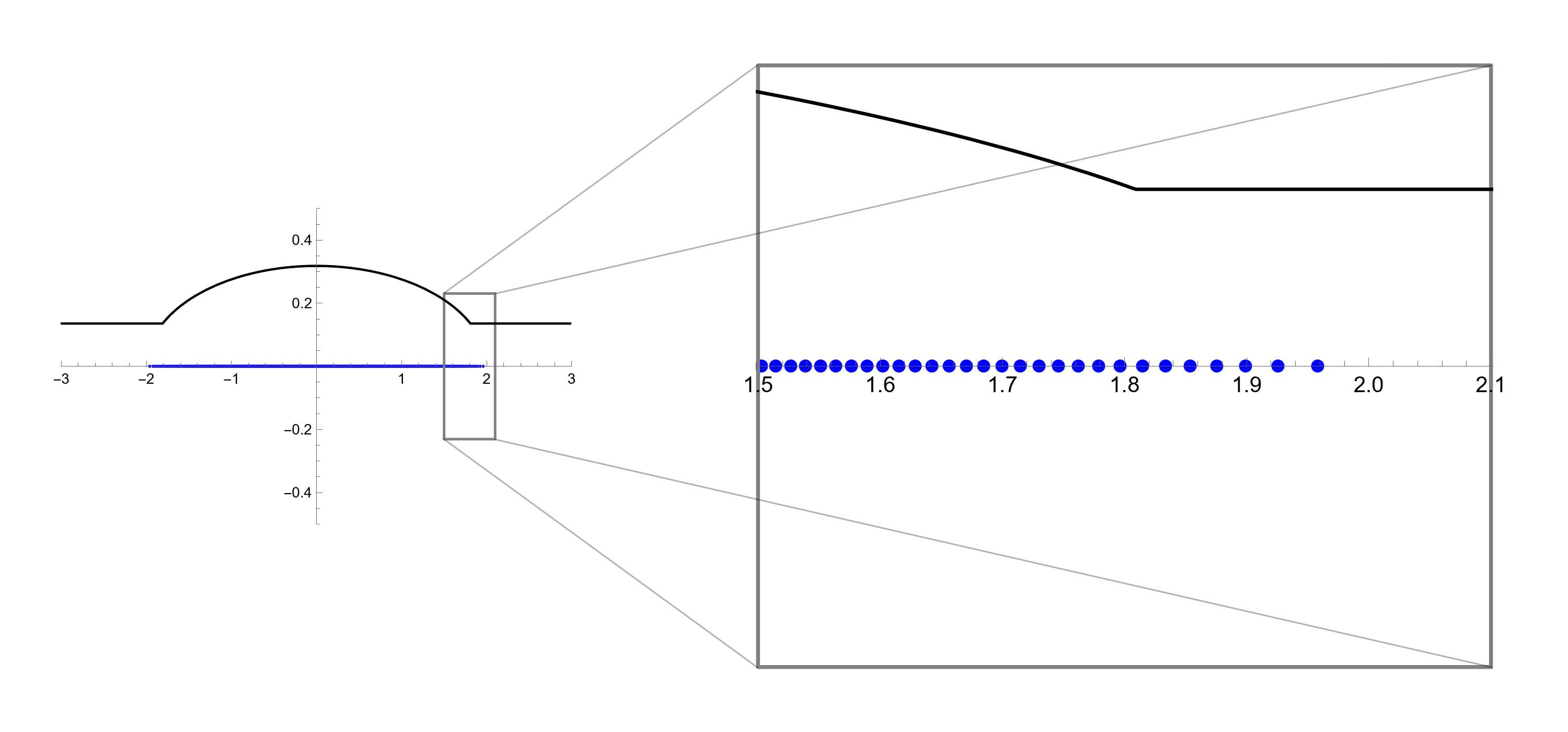}
	\end{center}
	\caption{A numerically generated Fekete set with $n=400$ points. The graph depicts the function
		$\sigma_n(x)$ for $n=400$.}
	\label{fig1}
\end{figure}
\begin{figure}[ht]
	\begin{center}
		\includegraphics[scale=0.11]{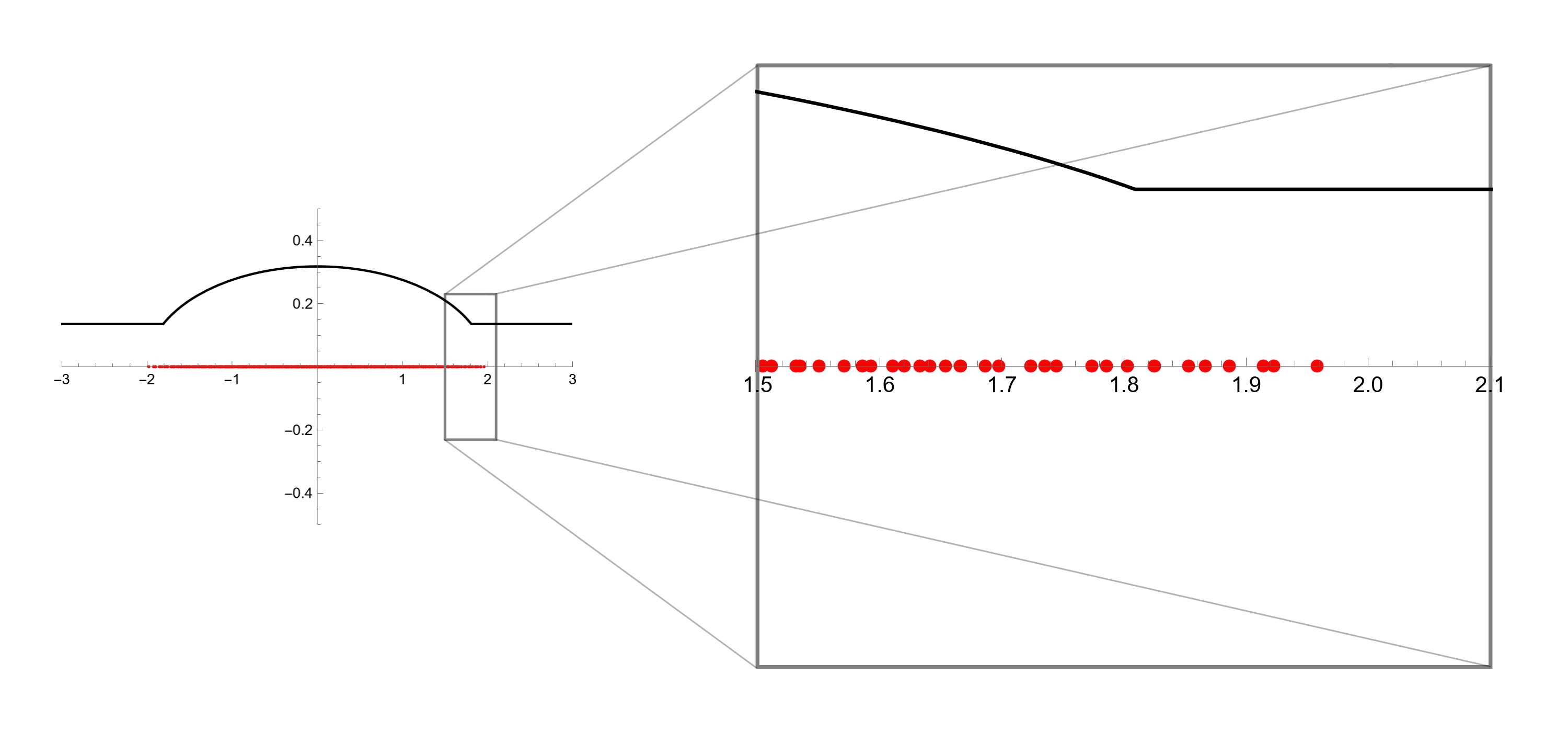}
	\end{center}
	\caption{A sample from the GUE, with $n=400$ points.}
	\label{fig2}
\end{figure}

Rescaled samples $\{\tilde{x}_j\}_1^n$ picked with respect to
$\P_n^\beta$ have also been well studied, but they behave more wildly than Fekete sets, see Remark \ref{sens0} as well as Figure \ref{fig2}.
On the other hand,
on \textit{mesoscopic scales} which are $n^\varepsilon$ times larger than $\mu_n$, things look much nicer, see e.g.
\cite{BaSu,BeLo,BoErYa,Gu,SoWo,Wo}.  We now briefly describe some aspects of this development, but first we fix some notation.

We shall work with \textit{families} of configurations
$$x=\{x^{(n)}\}_{n=1}^\infty$$
where each $x^{(n)}=\{x^{(n)}_j\}_{j=1}^n$ is an $n$-point configuration. A natural probability distribution on the set of families is given by the product measure
$$\mathbf{P}=\prod_{n=1}^\infty \mathbf{P}_n^{\beta_n},$$ where $\beta_n$ are positive numbers which may depend on $n$. To ease up the notation we usually write $x_j$ rather than $x_j^{(n)}$.

In the following, the term \textit{almost surely}
will always be understood with respect to the measure $\mathbf{P}$, except when the opposite is made explicit.

We now introduce our notion of Beurling-Landau densities, which is adapted to
the multifractal nature of families picked randomly with respect to the probability law $\mathbf{P}$.

\begin{definition} Let $(x^{(n)})_{n=1}^\infty$ be a family of $n$-point configurations and write $x^{(n)}=\{x^{(n)}_j\}_{j=1}^n$. Given an interval $I$ we write
	$$\#_n[I]=\#(I\cap\{x^{(n)}_j\}_{j=1}^n).$$
	Also let $x$ be a point with $-2\le x\le 2$ and $\varepsilon\ge0$.
	We define the following lower and upper Beurling-Landau densities of the family at $x$.
	\begin{enumerate}
		\item If $x$ is in the bulk, we define
		\begin{align}\label{blbulk}
			\begin{aligned}
				\bullet && \ubld(x,\varepsilon)&:=\limsup_{L\to \infty}\limsup_{n\to\infty}
				\frac{\#_n\big[B(x,L n^\varepsilon/n)\big]}{2L n^\varepsilon}
				\\ \bullet && \lbld(x,\varepsilon)&:=\liminf_{L\to \infty}\liminf_{n\to\infty}
				\frac{\#_n\big[B(x,L n^\varepsilon/n)\big]}{2L n^\varepsilon}
			\end{aligned}\qquad (-2<x<2).
		\end{align}
		\item If $x$ is at the edge we define
		\begin{align}\label{bledge}
			\begin{aligned}
				\bullet && \ubld(x,\varepsilon)&:=\limsup_{L\to \infty}\limsup_{n\to\infty}
				\frac{\#_n\big[B(x,(L n^\varepsilon/n)^{\frac 23})\big]}{L n^\varepsilon}
				\\ \bullet && \lbld(x,\varepsilon)&:=\liminf_{L\to \infty}\liminf_{n\to\infty}
				\frac{\#_n\big[B(x,(L n^\varepsilon/n)^{\frac 23})\big]}{L n^\varepsilon}
			\end{aligned}\qquad (x=\pm 2).
		\end{align}
	\end{enumerate}
	If both densities coincide we call the common value the Beurling-Landau density at scale $\varepsilon$ and write
	\[\bld(x,\varepsilon):=\ubld(x,\varepsilon)=\lbld(x,\varepsilon).\]
	For the \textit{microscopic scale} $\varepsilon=0$ we just write $\bld(x),\ubld(x),\lbld(x)$ for the respective Beurling-Landau densities:
	$$\ubld(x)=\ubld(x,0),\quad \lbld(x)=\lbld(x,0),\quad \bld(x)=\bld(x,0).$$
	If $\varepsilon>0$ we speak of a \textit{mesoscopic scale}. (In this case the parameter $L$ does not play a crucial role, but facilitates the exposition.)
\end{definition}

In the following the sample-size $n$ is always understood. To simplify the notation we will therefore drop the sub-index in the sequel, writing $\#[I]$ in place of $\#_n[I]$.

\begin{remark} To define Beurling-Landau densities, we zoom on $x$ by blowing up by the factor $n$ if $x$ is in the bulk and $n^{\frac 2 3}$ if $x\in \{\pm 2\}$.
	For the rescaled process, we count the number of points in an appropriate large interval (either $[-Ln^\varepsilon,Ln^\varepsilon]$ or $[-(Ln^\varepsilon)^{\frac 2 3},(Ln^\varepsilon)^{\frac 2 3}]$) and normalize suitably, by a factor proportional to $Ln^\varepsilon$.
	Finally we send $n\to\infty$, then $L\to\infty$. We shall find that $\bld$, when it exists, gives a suitable, \textit{global} measure of equidistribution for samples from Gaussian $\beta$-ensembles.
\end{remark}

We now survey some relevant results on mesoscopic and microscopic densities in the case of a fixed finite $\beta$ (i.e. independent of $n$).

For $x$ in the bulk $(-2,2)$ and $\eps>0$ arbitrarily small, the following mesoscopic result follows from \cite[Theorem~3.1]{BoErYa}:
\begin{align}\label{emes}
	\bld(x,\varepsilon)= \sigma(x) \qquad  \text{almost surely.}
\end{align}
A similar statement found in \cite{Wo} applies at the boundary:
\begin{align}\label{nemes}
	\bld(x,\varepsilon)=\frac 2 {3\pi}\qquad  \text{almost surely.}
\end{align}
However, \eqref{emes} and \eqref{nemes} are \textit{false} at the microscopic scale $\varepsilon=0$. For $x$ in the bulk it follows from \cite[Theorem~1]{HoVa} (details are given in Theorem \ref{necbl} below) that for every fixed $\beta>0$ and every $L>0$,
\[\limsup_{n\to \infty} \#[B(x,L/n)]= \infty \qquad  \text{almost surely,}\]
contradicting \eqref{emes} with $\eps=0$.
At the edge we can use results due to Ledoux and Rider (see Theorem \ref{necbl} below) to deduce that
$$\liminf_{n\to \infty}\#[B(\pm 2,(L/n)^{\frac 2 3})]=0\qquad  \text{almost surely}$$
for each (large) $L$. This contradicts \eqref{nemes} at $\eps=0$.

\begin{remark} \label{sens0} While the above facts show that it is impossible to make sense of microscopic Beurling-Landau densities for bona fide Gaussian $\beta$-ensembles, it is well-known that one can
	define limiting point-processes on the microscopic scale, namely $\text{Sine}_\beta$  in the bulk and  $\text{Airy}_\beta$  at the edge, see \cite{VaVi,RaRiVi}. These processes are in fact used in several instances below. We note that relevant discrepancy estimates for $\text{Sine}_\beta$ and  $\text{Airy}_\beta$ can be found in \cite{HoPa,Zh}.
\end{remark}

In what follows we will frequently encounter
the following condition: there exists a constant $c_0>0$ such that (for all large $n$)
\begin{align}\label{eq_lowtemp}
	\beta={\beta_n} \geq c_0 \log n.
\end{align}
We refer to it as the \textit{perfect freezing regime}.
Under this condition we shall find that as $n\to\infty$, the system $\{x_j\}_1^n$ exhibits a freezing behavior and inherits some properties of Fekete sets. Moreover, \eqref{eq_lowtemp} is the condition we need in order to
take the equidistribution results \eqref{emes} and \eqref{nemes} to the microscopic scale $\eps=0$.

The following remark gives some background on our choice of terminology.

\begin{remark} The condition \eqref{eq_lowtemp} appears in a two-dimensional setting in the paper \cite{Am}. More recently, in \cite{AmRom}, it is proved that \eqref{eq_lowtemp}
	implies global equidistribution for a large class of two-dimensional Coulomb-gas ensembles, and the condition is there conjectured to be ``sharp''. It is relevant to note that, in dimension two, a different kind of phase transition is
	believed to occur when $\beta$ increases beyond a value of approximately $\beta_f\approx 140$, see \cite{CSA} and comments in \cite[Section 7]{AmRom}. For $\beta>2$ certain oscillations of the 1-point density appear near the boundary.
	It is believed that, as $\beta$ increases beyond $\beta_f$, the oscillations start propagating towards the bulk at an increasing rate, where they eventually freeze up in the form of Fekete-configurations at $\beta=\infty$. Hence the assumed transition-value $\beta_f$ might be called an ``inverse freezing temperature''. On the other hand, equidistribution in the sense of Beurling-Landau densities gives a different kind of phase transition, which for lack of better was termed ``perfect freezing'' in \cite{AmRom}, to distinguish it from the supposed transition $\beta_f$. It is proved in \cite{AmRom} that the perfect freezing transition in dimension 2 (call it $\beta_p$) satisfies $\beta_p\lesssim \log n$, and it is conjectured that $\beta_p\asymp\log n$.
	In any event, we shall verify below that the perfect freezing regime \eqref{eq_lowtemp} is necessary and sufficient in order to achieve equidistribution in the context of one-dimensional Gaussian ensembles.
\end{remark}

We are now ready to state our main results on equidistribution.

\begin{theorem}[Sufficiency for equidistribution] \label{sufbl}
	If for some constant $c_0>0$
	$$\beta_n\ge c_0 \log n,$$ then almost surely the following statements hold simultaneously:
	\begin{enumerate}[label=(\roman*),ref=(\roman*), itemsep=1em]
		\item \label{bulk} (Bulk) For every $x\in (-2,2)$, the bulk Beurling-Landau density \eqref{blbulk} with $\varepsilon=0$
		exists and equals to the semi-circle density:
		\begin{align*}
			\bld(x)=\sigma(x),\qquad -2<x<2;
		\end{align*}
		Moreover, the limits are uniform in the following sense:
		\begin{align*}
			\lim_{L\to \infty}\sup_{\delta>0}\limsup_{n\to\infty}
			\sup_{x \in [-2+\delta,2-\delta]}\Big|\frac{\#\big[B(x,L/n)\big]}{2L }-\sigma(x)\Big|=0.
		\end{align*}
		\item \label{boun} (Boundary)  For $x=\pm 2$ the edge Beurling-Landau density \eqref{bledge} with $\varepsilon=0$
		exists and equals to
		\begin{align*}
			\bld(x)=\frac{2}{3\pi};
		\end{align*}
		\item \label{exte} (Exterior)
		There exists a constant $M\ge 1$ depending on $c_0$ such that,
		\begin{align*}
			\lim_{n\to\infty}
			\#\big[\,\R\setminus ((2+Mn^{-\frac 23})[-1,1])\,\big]=0.
		\end{align*}
	\end{enumerate}
\end{theorem}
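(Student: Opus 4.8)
We sketch a strategy; the statement has two rather different parts, the ``soft'' exterior bound \ref{exte} and the ``sharp-constant'' equidistribution statements \ref{bulk}--\ref{boun}.

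The plan is to obtain \ref{exte} directly from known tail estimates, and to deduce \ref{bulk}--\ref{boun} from the assertion that, in the perfect freezing regime \eqref{eq_lowtemp}, a random family $x=\{x^{(n)}\}$ is almost surely \emph{eventually} both a sampling set and an interpolating set, with constants depending only on $c_0$, for the spaces of weighted polynomials $\mathcal{P}_n=\{P(t)e^{-nt^2/4}\ :\ \deg P<n\}\subset L^2(\R)$. The reason this is the right reduction is that the critical Beurling--Landau density of $\mathcal{P}_n$ at a droplet point $x$ is $n\sigma_n(x)$, which after the normalisations built into \eqref{blbulk}--\eqref{bledge} produces exactly the target values $\sigma(x)$ and $\tfrac{2}{3\pi}$. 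I would begin with \ref{exte}: by the reflection symmetry of $\pp$ it is enough to bound the rightmost particle, and Ledoux--Rider type tail bounds for the largest particle of the Gaussian $\beta$-ensemble, combined with \eqref{eq_lowtemp}, give $\pp\big[\#[(2+Mn^{-2/3},\infty)]\ge 1\big]\le Ce^{-c\beta_nM^{3/2}}\le Cn^{-cc_0M^{3/2}}$ for $M$ in a suitable range, with even faster decay for $M$ large coming from the Gaussian field; choosing $M=M(c_0)$ with $cc_0M^{3/2}>1$ makes this summable in $n$, and Borel--Cantelli finishes since $\#$ is integer valued. From here on all estimates may be localised to an $\oo(n^{-2/3})$-neighbourhood of $[-2,2]$, where $\sigma_n\ge n^{-1/3}$ and $\mu_n\le n^{-2/3}$.

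Next I would establish two geometric half-statements for the random family. For \emph{separation}, the factor $\prod_{k<j}|x_k-x_j|^{\beta_n}$ in \eqref{eqprob} penalises near-collisions: a one-particle ``push'' estimate, integrating out all but two coordinates, gives $\pp\big[\exists\,i\ne j:\ |x_i-x_j|<\delta\,\mu_n(x_i)\big]\le n^{\oo(1)}(C\delta)^{\beta_n}$, with $\mu_n$ as in \eqref{munn}, and since $\beta_n\ge c_0\log n$ this is summable once $\delta=\delta(c_0)$ is small; Borel--Cantelli then yields $\delta>0$ with consecutive particles at distance $\ge\delta\mu_n$ for all large $n$, almost surely. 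For \emph{no long gaps}, conversely, a hole of length $C\mu_n(x)$ forces of order $C$ particles to be expelled and hence, by the uniform convexity of $V$ and the rigidity of the logarithmic energy --- equivalently, by known hole-probability bounds for the Gaussian $\beta$-ensemble in the bulk and their $\mathrm{Airy}_\beta$ analogues at the edge --- has probability $\le Ce^{-c\beta_nC^2}\le Cn^{-cc_0C^2}$; a union bound over a mesh of width comparable to $\mu_n$ (with $\oo(n)$ points) and Borel--Cantelli produce $C=C(c_0)$ such that, almost surely and eventually, every interval of length $C\mu_n$ meeting the droplet contains a particle. Thus, almost surely, the random family is eventually both $\delta\mu_n$-separated and $C\mu_n$-dense. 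Feeding this into the weighted-polynomial toolkit then yields \ref{bulk} and \ref{boun}: using the Christoffel-function asymptotics $\lambda_n(x)\asymp 1/(n\sigma_n(x))$ of Levin and Lubinsky (with the edge regime described by Gustavsson), the corresponding reproducing-kernel off-diagonal decay, local Remez/Nikolskii inequalities, and a construction of $\mathcal{P}_n$-bumps localised on the $\mu_n$-scale, the standard Marcinkiewicz--Zygmund argument upgrades ``$\delta\mu_n$-separated $+$ $C\mu_n$-dense'' to ``sampling $+$ interpolating for $\mathcal{P}_n$'' with frame and Riesz constants depending only on $c_0$ --- in particular not degenerating as $x\to\pm2$, which is the point of the regularisation $\sigma_n$ in \eqref{eq_psi}. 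The Landau-type necessary density inequalities for $\mathcal{P}_n$ --- sampling forces the local count to be at least, and interpolation at most, the critical value --- then give, along the random family and uniformly in $x$, $\big|\#[B(x,r)]-n\!\int_{B(x,r)}\sigma_n\big|\le C\log\!\big(2+n\sigma_n(x)r\big)$ with $C=C(c_0)$. Taking $r=L/n$ in the bulk, so that $n\!\int_{B(x,L/n)}\sigma=2L\sigma(x)(1+\so(1))$ and $C\log(2+L\sigma(x))=\so(L)$, sandwiches $\#[B(x,L/n)]/(2L)$ between $\sigma(x)\pm\oo(\log L/L)$ uniformly in $x\in(-2,2)$; letting $L\to\infty$ gives \ref{bulk} with the asserted uniformity. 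For \ref{boun} one repeats the argument at the Airy interval $B(\pm2,(L/n)^{2/3})$, using \ref{exte} to discard $\R\setminus(2+(L/n)^{2/3})[-1,1]$ once $L>M^{3/2}$, the matching critical density $n\!\int_{B(2,(L/n)^{2/3})}\sigma=\tfrac{2}{3\pi}L(1+\so(1))$, and the edge versions of the polynomial estimates.

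The main obstacle is the passage through the weighted-polynomial machinery. The probabilistic inputs --- the exterior bound, separation, and the absence of long gaps --- are one-dimensional analogues of the two-dimensional Coulomb-gas estimates of Ameur and Romero and should go through with essentially routine modifications. What requires genuine work is obtaining the weighted-polynomial estimates (Christoffel-function asymptotics, reproducing-kernel decay, local Remez/Nikolskii inequalities, bump constructions) \emph{uniformly} over the droplet and, above all, across the bulk-to-edge crossover, where the microscopic scale changes from $1/(n\sigma(x))$ to $n^{-2/3}$ and the limiting model passes from $\mathrm{Sine}_\beta$ to $\mathrm{Airy}_\beta$; this is where the quantitative results of Levin, Lubinsky and Gustavsson, together with non-trivial additional bookkeeping, enter, and it accounts for the ``considerable modifications'' relative to dimension two. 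A secondary point of care is keeping the constants in the Landau inequalities (hence the final error $\oo(\log L/L)$) uniform in $x$, which again rests on the uniformity of the polynomial estimates afforded by the regularisation $\sigma_n$.
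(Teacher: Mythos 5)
Your outline of part \ref{exte} (Ledoux--Rider tail bound plus Borel--Cantelli plus integer-valuedness of the count) matches the paper's treatment essentially word for word. The high-level reduction of \ref{bulk}--\ref{boun} to sampling-and-interpolation for weighted polynomials followed by a Landau-type density argument is also the right one, and your observation about what the rescalings in \eqref{blbulk}--\eqref{bledge} are tuned to is correct. However, the middle of your route deviates from the paper in a way that contains a real gap, not just a road less travelled.

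First, the paper never proves a ``no long gaps''/hole-probability estimate, and it does not obtain interpolation from a geometric density condition. It constructs interpolants explicitly from the random Lagrange polynomials $\ell_j$ of \eqref{eq_lp}, and bounds them uniformly via the Gibbs-measure identity of Lemma~\ref{lemmavar} (the change-of-measure trick $|\ell_1(x)|^\beta p_n^\beta(x_1,\ldots)=p_n^\beta(x,x_2,\ldots)$), boosted from $L^{\beta_n}$ to $L^\infty$ by Nikolskii (Theorem~\ref{tub}); sampling is then a second explicit kernel computation resting on Lemma~\ref{lemtech}. Your alternative --- a hole-probability bound at the $\mu_n$-scale uniformly across the bulk--edge crossover for $n$-dependent $\beta_n$, followed by a separation-plus-density-implies-MZ theorem for $\ww_n$ --- is not an established black box, and you would have to build both halves from scratch; the cited $\mathrm{Sine}_\beta$/$\mathrm{Airy}_\beta$ hole estimates are asymptotic in $\beta$ fixed and do not directly give the $O(n)$-term union bound you need with the required uniformity in $x$.

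Second, and more seriously, you claim to obtain ``sampling $+$ interpolating for $\mathcal{P}_n$'' for a single space of dimension $n$ from an $n$-point configuration. That would make the configuration a Riesz basis, i.e.\ a configuration with \emph{zero} local discrepancy (up to boundary effects), which is precisely what you are trying to \emph{prove} has merely logarithmic fluctuations; a generic separated-and-dense set at exactly critical density is not a Riesz basis. The paper sidesteps this by proving sampling only on the strictly smaller spaces $\ww_{\lfloor n\rho\rfloor}$ with $\rho<1$ and interpolation only on the larger $\ww_{\lceil n\rho\rceil}$ with $\rho>1$ (Theorem~\ref{theosam}), paying an $A/(1-\rho)^2$-type blow-up. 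The discrepancy bound of Theorem~\ref{propdisc} then comes from a Landau-type eigenvalue count for Toeplitz operators $T_{\rho n,\Omega_n}$ with quantitative spectral asymptotics from Lemmas~\ref{lemma_e1} and \ref{lemtrace2} (the Sine and Airy cases), followed by an optimization of $\gamma=|1-\rho|$ against the exponent in those asymptotics ($\gamma\sim L^{-1}$ in the bulk, $\gamma\sim(\log L/L)^{1/5}$ at the edge). Your sketch neither allows for the bandwidth margin $\rho$ nor invokes the Toeplitz spectral estimates that make the $\log$-type error term come out; without those ingredients the deduction from MZ to a sandwich of the count does not close. If you replace your ``sampling $+$ interpolating for the same space'' with the paper's asymmetric version for $\ww_{n\rho^\pm}$, trade your hole-probability step for the Lagrange-function construction, and add the Toeplitz eigenvalue counting, you recover the paper's argument.
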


We shall say that a sample is \textit{equidistributed} if it simultaneously obeys all three properties \ref{bulk},\ref{boun},\ref{exte}. Thus Theorem \ref{sufbl} says
that the condition $\beta_n\gtrsim\log n$ implies equidistribution of almost every random family. We remark that the third statement \ref{exte} immediately follows from \cite[Theorem~1]{LeRi} (see Theorem~\ref{lemmaprob}), whereas the first two statements \ref{bulk} and \ref{boun} are a consequence of more precise discrepancy estimates which we present in Theorem~\ref{propdisc}.

We have also a converse to Theorem \ref{sufbl}.

\begin{theorem}[Necessity for equidistribution] \label{necbl} The following statements hold.
	\begin{enumerate}[label=(\Roman*),ref=(\Roman*), itemsep=1em]
		\item \label{ZW} (Bulk) If the inverse temperature $\beta_n=\beta$ is fixed, then for every $x$ with $-2<x<2$ almost surely for every $L>0$,
		\[\limsup_{n\to \infty} \#\big[B(x,L/n)\big]=\infty
		\qquad \text{and} \qquad \liminf_{n\to \infty} \#\big[B(x,L/n)\big]=0.\]
		In particular,
		\[\ubld(x)=\infty
		\qquad \text{and} \qquad \lbld(x)=0
		\qquad \text{almost surely.}\]
		\item \label{ZE} (Boundary) If $\beta_n$ is not in the perfect freezing regime, that is if
		\begin{equation*}\liminf_{n\to\infty}\frac {\beta_n}{\log n}=0,\end{equation*}
		then for $x=\pm 2$ almost surely for every $L>0$,
		\[\liminf_{n\to \infty} \#\big[B(x,(L/n)^{\frac 23})\big]=0.\]
		In particular,
		\[\lbld(x)=0 \qquad \text{almost surely.}\]
		\item \label{ZZ} (Exterior) If $\beta_n$ is not in the perfect freezing regime, then almost surely for every $L>0$,
		\[\limsup_{n\to\infty}
		\#\big[\,\R\setminus ((2+Ln^{-\frac 23})[-1,1])\,\big]\ge 1 .\]
	\end{enumerate}
	In particular, if almost every sample is equidistributed, then necessarily $\beta_n\gtrsim \log n$.
\end{theorem}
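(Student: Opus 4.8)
The plan is to obtain each of \ref{ZW}, \ref{ZE}, \ref{ZZ} from the second Borel--Cantelli lemma. The structural point is that $\mathbf{P}=\prod_n\mathbf{P}_n^{\beta_n}$ is a product measure, so events indexed by distinct sample sizes $n$ are independent; consequently, for any sequence of events $A_n$ with $A_n$ depending only on the configuration $x^{(n)}$, the divergence $\sum_n\mathbf{P}(A_n)=\infty$ already forces $\mathbf{P}(A_n \text{ occurs infinitely often})=1$. Granting the three statements, the final assertion follows by contraposition: if $\beta_n$ does not satisfy $\beta_n\gtrsim\log n$, i.e.\ $\liminf_n\beta_n/\log n=0$, then \ref{ZE} yields $\lbld(\pm2)=0$ almost surely, which is incompatible with the equidistribution requirement $\bld(\pm2)=\tfrac2{3\pi}$ (and by \ref{ZZ} property \ref{exte} fails as well); thus almost no sample is equidistributed, and therefore equidistribution of almost every sample forces $\beta_n\ge c_0\log n$ for some $c_0>0$.

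For \ref{ZW} (bulk, $\beta_n\equiv\beta$ fixed), fix $x\in(-2,2)$, $L>0$ and $k\in\N$, and blow up at $x$ on the scale $1/n$. The rescaled configuration converges in distribution to (a scaled copy of) the $\mathrm{Sine}_\beta$ point process \cite{VaVi}; in particular $\mathbf{P}_n^{\beta}(\#[B(x,L/n)]\ge k)\to p_k$ and $\mathbf{P}_n^{\beta}(\#[B(x,L/n)]=0)\to q$, where $p_k$ is the probability that $\mathrm{Sine}_\beta$ has at least $k$ points, and $q$ the probability that it has no point, in a fixed interval of length comparable to $2L\sigma(x)$. Since $\mathrm{Sine}_\beta$ is a non-degenerate, almost surely infinite point process, $p_k>0$ for all $k$ and $q>0$ (a clean quantitative lower bound for $p_k$, i.e.\ overcrowding, is \cite[Theorem~1]{HoVa}). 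Hence both series $\sum_n\mathbf{P}_n^{\beta}(\#[B(x,L/n)]\ge k)$ and $\sum_n\mathbf{P}_n^{\beta}(\#[B(x,L/n)]=0)$ diverge, and Borel--Cantelli gives, almost surely, $\#[B(x,L/n)]\ge k$ infinitely often and $\#[B(x,L/n)]=0$ infinitely often. Intersecting over $k\in\N$ and over a sequence $L\to\infty$ and using monotonicity in $L$ yields $\limsup_n\#[B(x,L/n)]=\infty$ and $\liminf_n\#[B(x,L/n)]=0$ for every $L>0$, hence $\ubld(x)=\infty$ and $\lbld(x)=0$.

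For \ref{ZE} and \ref{ZZ} I would use the largest particle $x_{\max}=\max_j x_j$ (the left edge being symmetric). For \ref{ZZ} one has $\{x_{\max}>2+Ln^{-2/3}\}\subseteq\big\{\#[\R\setminus((2+Ln^{-2/3})[-1,1])]\ge1\big\}$ and estimates $\mathbf{P}_n^{\beta_n}(x_{\max}>2+Ln^{-2/3})$ from below by the Ledoux--Rider lower bound for the right tail of the largest eigenvalue of the Gaussian $\beta$-ensemble, which in the relevant range is $\gtrsim e^{-c\beta_n L^{3/2}}$. For \ref{ZE} one uses instead $\{x_{\max}\le2-(L/n)^{2/3}\}\subseteq\{\#[B(2,(L/n)^{2/3})]=0\}$ together with the Ledoux--Rider lower bound for the left tail, $\mathbf{P}_n^{\beta_n}(x_{\max}\le2-(L/n)^{2/3})\gtrsim e^{-c\beta_n L^{2}}$ (see \cite{LeRi}). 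In both cases the hypothesis $\liminf_n\beta_n/\log n=0$ is exactly what is needed: for the given $L$ it furnishes a subsequence of sample sizes along which $\beta_n$ is so small relative to $\log n$ that the above lower bounds are not summable, so that Borel--Cantelli (applied to the independent events indexed by that subsequence) gives $\#[B(2,(L/n)^{2/3})]=0$ infinitely often, resp.\ $x_{\max}>2+Ln^{-2/3}$ infinitely often. Intersecting over a sequence $L\to\infty$ and using monotonicity in $L$ gives the statements for every $L>0$, whence $\lbld(\pm2)=0$ and \ref{ZZ}.

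The step I expect to be the main obstacle is the quantitative input just invoked: pinning down the Ledoux--Rider large-deviation lower bounds for $x_{\max}$ with the correct joint dependence on $\beta_n$ and $L$ and over the admissible parameter range, and then extracting from the bare hypothesis $\liminf_n\beta_n/\log n=0$ a subsequence of sample sizes along which these probabilities genuinely fail to be summable. By comparison \ref{ZW} is soft, requiring only process convergence and non-degeneracy of $\mathrm{Sine}_\beta$; the only care needed there is to check that the distributional limit of $\#[B(x,L/n)]$ is a bona fide non-constant random variable for each fixed $L$, so that both the overcrowding probability and the hole probability are strictly positive.
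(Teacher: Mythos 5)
Your argument follows exactly the paper's route: part \ref{ZW} via convergence to $\mathrm{Sine}_\beta$ combined with the overcrowding bound from \cite{HoVa} and the gap-probability estimate of \cite{VaVi} (the paper invokes \cite[Theorem~5]{VaVi} for the latter; mere ``non-degeneracy'' of the limit process would not alone give positive hole probability), and parts \ref{ZE}, \ref{ZZ} via the Ledoux--Rider lower tail bounds $\mathbf{P}(x_{\max}\le 2-kn^{-2/3})\gtrsim C^{-\beta_n k^3}$ and $\mathbf{P}(x_{\max}\ge 2+kn^{-2/3})\gtrsim C^{-\beta_n k^{3/2}}$ from \cite[Theorem~4]{LeRi}, together with second Borel--Cantelli along a subsequence where $\beta_n/\log n\to 0$. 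One small correction: to upgrade the $\limsup_n\#[B(x,L/n)]=\infty$ in \ref{ZW} from a countable family to all $L>0$ by monotonicity, you must take a sequence $L_j\to 0$ (as the paper does with $L_j=1/j$), not $L_j\to\infty$; the latter direction is what you need for the $\liminf$ statements.
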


While the contexts differ, 
the above results might be regarded as a manifestation of the principle from \cite{CSA} that \textit{freezing is most naturally studied by looking closely at the edge}.

We mention that the proof of \ref{ZW} follows from overcrowding and gap results for the $\text{Sine}_\beta$ kernel \cite{HoVa,VaVi}, while \ref{ZE} and \ref{ZZ} are a consequence of lower tail bounds for $\max_j x_j$ from Ledoux and Rider \cite{LeRi}.

In the bulk case \ref{ZW} we do not know the optimal growth rate of $\beta_n$ implying that $D_{BL}(x)=\sigma(x)$ almost surely for each $x$ in the bulk. (We know that $\beta_n\gtrsim \log n$ is sufficient for $D_{BL}(x)=\sigma(x)$ in the bulk, and $\lim_n \beta_n\to \infty$ is necessary.)

\subsection{Key auxiliary results and definitions} Before continuing with further main results, it is convenient to display some key notions and facts, and to lay out some of our main strategy.

We start by stating \cite[Theorem~1]{LeRi} due to Ledoux and Rider.

\begin{theorem}[Ledoux-Rider's localization theorem, \cite{LeRi}]\label{lemmaprob}
	Let $\{x_j\}_1^n$ be a random sample from \eqref{eqprob}. There exists a universal constant $C>0$ such that for every $n\ge 1$, $0<\varepsilon\le 1$ and $\beta\geq 1$,
	\[\mathbf{P}_n^\beta\left(\left\{\max_{1\le j\le n}|{x}_j|>2+\varepsilon\right\}\right)\le C e^{-n \beta \varepsilon^{\frac 32}/C}.\]
	
	In particular, in the perfect freezing regime
	$\beta_n\ge c_0\log n$, for every $\tau>0$ there is a constant $M\ge 1$, depending on $\tau$ and $c_0$, such that
	\begin{align}\label{erl}
		\pp\left(\left\{\max_{1\le j\le n}|{x}_j|>2+Mn^{-\frac 23}\right\}\right)\le n^{-\tau}.
	\end{align}
\end{theorem}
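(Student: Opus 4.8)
The first inequality is \cite[Theorem~1]{LeRi}, and I would simply cite it: its proof is the one genuinely hard input here and I would treat it as a black box. Heuristically it reflects the following. Write $Q(x)=V(x)-2\int\log|x-y|\,d\sigma(y)$ for the effective potential; then $Q\equiv\ell$ on the droplet $[-2,2]$, while for $x>2$ one computes $Q'(x)=\sqrt{x^2-4}$, so that $Q(2+\varepsilon)-\ell\asymp\varepsilon^{3/2}$ for $0<\varepsilon\le1$. Since the density \eqref{eqprob} weights a configuration having one particle at $x$, relative to having it at the edge, by roughly $e^{-\frac{\beta n}{2}(Q(x)-\ell)}$ when the remaining particles follow $\sigma$, the chance that a given particle exceeds $2+\varepsilon$ is $\lesssim e^{-c\beta n\varepsilon^{3/2}}$, and a union bound over the $n$ particles (absorbing the polynomial prefactor into the constant) gives the stated form. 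Making this estimate uniform over the random positions of the other particles is precisely the content of \cite{LeRi}, carried out there via the tridiagonal model.

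Granting the first inequality, the ``in particular'' statement is a short computation. Fix $\tau>0$ and assume $\beta_n\ge c_0\log n$. For $n$ large enough that $Mn^{-2/3}\le1$ and $\beta_n\ge1$ (the latter holding as soon as $n\ge e^{1/c_0}$), apply the first inequality with $\varepsilon=Mn^{-2/3}$:
\begin{align*}
	\pp\Big(\Big\{\max_{1\le j\le n}|x_j|>2+Mn^{-2/3}\Big\}\Big)
	&\le C\exp\!\Big(-\tfrac1C\,n\beta_n\,(Mn^{-2/3})^{3/2}\Big)\\
	&=C\exp\!\Big(-\tfrac{M^{3/2}}{C}\,\beta_n\Big)\ \le\ C\,n^{-c_0M^{3/2}/C}.
\end{align*}
Choosing $M=M(\tau,c_0)\ge1$ so large that $c_0M^{3/2}/C\ge\tau+1$ makes the right-hand side at most $C\,n^{-\tau-1}\le n^{-\tau}$ once $n\ge C$, which proves \eqref{erl} for all $n$ beyond a threshold $N_0=N_0(\tau,c_0)$.

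The remaining (finitely many) values of $n$ are handled by pure bookkeeping. When $Mn^{-2/3}>1$ I would instead use the inclusion $\{\max_j|x_j|>2+Mn^{-2/3}\}\subseteq\{\max_j|x_j|>3\}$ together with the first inequality at $\varepsilon=1$, giving $Ce^{-n\beta_n/C}\le Ce^{-n/C}$, which is below $n^{-\tau}$ for all $n$ past a $\tau$-dependent threshold. The finitely many $n$ still not covered are absorbed by enlarging $M$ once more: for each fixed $n$, $\pp(\max_j|x_j|>T)\le\sum_j\pp(|x_j|>T)\to0$ as $T\to\infty$, and the probability in \eqref{erl} is nonincreasing in $M$, so enlarging $M$ to cover them does not spoil the bound already obtained for larger $n$. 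I do not expect any real obstacle in this part: the entire analytic difficulty is packaged inside \cite[Theorem~1]{LeRi}, and the only point demanding a little care in the deduction is respecting its hypotheses $0<\varepsilon\le1$ and $\beta\ge1$ — which is exactly what forces the small-$n$ discussion above.
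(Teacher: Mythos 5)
Your proposal is correct and takes the same route the paper does: the core estimate is \cite[Theorem~1]{LeRi}, cited as a black box, and the ``in particular'' clause follows by the substitution $\varepsilon=Mn^{-2/3}$ (so that $n\beta_n\varepsilon^{3/2}=\beta_n M^{3/2}\ge c_0 M^{3/2}\log n$) together with a choice of $M$ large, which is exactly the short computation the paper leaves to the reader. Your bookkeeping for the hypotheses $\varepsilon\le 1$, $\beta\ge 1$ and the finitely many small $n$ is also sound, in particular because the fallback bound at $\varepsilon=1$ gives a threshold independent of $M$, so the final enlargement of $M$ does not reopen the case distinction.
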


\begin{remark}
	In the perfect freezing regime, Theorem \ref{lemmaprob} allows us to focus our attention on a microscopic neighbourhood of the droplet. A two-dimensional counterpart appears in \cite{Am} (see also \cite{Th,AmTr}), where the result was termed ``localization theorem''. This latter result was later used systematically to study equidistribution in the paper \cite{AmRom}. Since Theorem~\ref{lemmaprob} will play a similar role, we term Theorem \ref{lemmaprob} a localization theorem as well.
\end{remark}

We next introduce some key objects, important for our proof of Theorem \ref{sufbl}.

We shall work frequently with the space of Gaussian weighted polynomials (of a \textit{real} variable $x$)
\begin{align}\label{eq_wp}
	\ww_n = \{f(x)=q(x)\cdot e^{-\frac 1 4nx^2}\,; \, q \in \mathbb{C}[x],\, \deg q \leq n-1\}.
\end{align}
We regard $\ww_n$ as a subspace of $L^2(\mathbb{R})$.

Given a configuration $\{x_j\}_{j=1}^n$ and an integer $j$ between $1$ and $n$ we define the associated (random) \textit{weighted Lagrange polynomials} $\ell_j\in \ww_n$ by
\begin{align}\label{eq_lp}
	\ell_j(x)=\Big(\prod_{k\neq j} \frac{x-x_k}{x_j-x_k} \Big) e^{-\frac 1 4 n(x^2-x_j^2)},\qquad x\in\R.
\end{align}
In what follows, a key fact is that in the perfect freezing regime, the Lagrange polynomials are almost surely eventually uniformly bounded.

\begin{theorem}\label{tub}
	In the perfect freezing regime \eqref{eq_lowtemp}, there is a constant $C\ge 1$ depending on $c_0$ such that
	\[\limsup_{n\to \infty}\max_{1\le j\le n} \|\ell_j\|_\infty \le C \qquad  \text{almost surely.}\]
\end{theorem}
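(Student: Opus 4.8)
The plan is to estimate a single weighted Lagrange polynomial $\ell_j$ by splitting the real line into a \emph{local} region near $x_j$ and a \emph{far} region, and to bound the product $\prod_{k\ne j}\frac{x-x_k}{x_j-x_k}$ separately on each region, using that in the perfect freezing regime the sample almost surely looks like a well-separated configuration at the correct microscopic scale. First I would record the deterministic structure: for $f=q\cdot e^{-\frac14 nx^2}\in\ww_n$ one has the reproducing-kernel/Christoffel-function machinery of Levin--Lubinsky, which gives pointwise bounds $|f(x)|\lesssim \big(n\sigma_n(x)\big)^{1/2}\|f\|_{L^2(\R)}$ and, more to the point, Markov--Bernstein and Nikolskii-type inequalities and a \emph{Marcinkiewicz--Zygmund} (sampling) inequality for $\ww_n$ at nodes that are $\mu_n$-separated. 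The key observation is that $\ell_j$ is the unique element of $\ww_n$ with $\ell_j(x_k)=\delta_{jk}$, so $\|\ell_j\|_\infty$ is controlled once we know that the nodes $\{x_k\}$ form a separated sampling set with good constants; this is exactly the kind of statement that should already be available (or be an immediate consequence of the discrepancy estimates referenced as Theorem~\ref{propdisc}) in the perfect freezing regime.

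Concretely, I would proceed as follows. Step 1: Use Theorem~\ref{lemmaprob} to restrict attention to the event that all particles lie in $[-2-Mn^{-2/3},2+Mn^{-2/3}]$, which holds with probability $\ge 1-n^{-\tau}$ for any $\tau$; by Borel--Cantelli this event holds for all large $n$ almost surely. Step 2: Establish, almost surely for large $n$, a microscopic separation lower bound of the form $|x_i-x_j|\gtrsim \mu_n(x_j)$ for neighbouring particles, together with an upper density bound $\#[B(x,R\mu_n(x))]\lesssim R$ — i.e. the configuration is a separated set at the $\mu_n$-scale with uniform constants. This separation step is where the hypothesis $\beta_n\gtrsim\log n$ enters decisively: for fixed $\beta$ the $\mathrm{Sine}_\beta$ overcrowding/collision phenomena (Theorem~\ref{necbl}\ref{ZW}) destroy it, but with $\beta_n\ge c_0\log n$ the relevant collision/overcrowding probabilities are summable in $n$, so Borel--Cantelli applies. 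Step 3: With separation in hand, bound $|\ell_j|$: on the far region $|x-x_j|\ge \delta$ (for a fixed small $\delta$), the weight $e^{-\frac14 n(x^2-x_j^2)}$ together with the Gaussian decay of weighted polynomials of degree $\le n-1$ forces $|\ell_j(x)|$ to be small — here one uses the classical fact that a weighted polynomial in $\ww_n$ attains its sup on (a neighbourhood of) $[-2,2]$, i.e. infinite–norm is governed by values on the droplet. On the local region, the product $\prod_{k\ne j}\big|\frac{x-x_k}{x_j-x_k}\big|$ is estimated by comparing it to the analogous product for a perfectly regular (arithmetic-progression-like) array at spacing $\mu_n$, for which the Lagrange functions are uniformly bounded; the separation and density bounds from Step 2 let one control the deviation. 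Packaging these, $\max_j\|\ell_j\|_\infty\le C(c_0)$ for all large $n$ almost surely.

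The main obstacle, I expect, is Step 2 together with the ``comparison to a regular array'' inside Step 3. Obtaining one-sided separation $|x_i-x_j|\gtrsim\mu_n$ almost surely for \emph{all} $j$ simultaneously (not just in distribution for a fixed pair) requires a union bound over $\sim n$ pairs against a collision probability that must therefore beat $n^{-1}$ with room to spare — this is precisely what $\beta_n\ge c_0\log n$ buys, but making the quantitative collision estimate uniform near the edge (where $\mu_n$ degenerates to $n^{-2/3}$) and across all scales is delicate, and is the place where the Levin--Lubinsky/Gustavsson-type weighted-polynomial estimates, rather than soft arguments, are needed. Once one has a genuine Marcinkiewicz--Zygmund inequality $\sum_k |f(x_k)|^2\mu_n(x_k)\asymp \|f\|_{L^2}^2$ for $f\in\ww_n$ with constants independent of $n$ (which separation plus the upper density bound yield via the standard Plancherel--Pólya scheme), the bound on $\|\ell_j\|_\infty$ follows cleanly: write $\ell_j=\sum_k \ell_j(x_k)\,\text{(dual frame element)}$, or more directly estimate $\|\ell_j\|_{L^2}^2\lesssim \mu_n(x_j)$ from the lower MZ bound applied to $\ell_j$ itself, and then convert $L^2$ to $L^\infty$ via the Nikolskii inequality $\|f\|_\infty\lesssim (n\sigma_n(x_j))^{1/2}\|f\|_{L^2}$ localized near $x_j$, giving $\|\ell_j\|_\infty\lesssim 1$. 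So the logical skeleton is short; the work is entirely in the uniform, quantitative separation estimate and in importing the correct weighted-polynomial inequalities.
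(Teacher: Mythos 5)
Your proposal reverses the logical order of the paper's argument, and the reversal creates a genuine gap: you take separation (and Marcinkiewicz--Zygmund/sampling estimates) as inputs to the Lagrange bound, whereas in the paper these are \emph{consequences} of Theorem~\ref{tub}, not ingredients. The actual proof bounds $\|\ell_j\|_\infty$ directly and without any prior knowledge of the geometry of the sample. The probabilistic crux is the reweighting identity of Lemma~\ref{lemmavar}: since $|\ell_j(x)|^{\beta}\,p_n^{\beta}(x_1,\ldots,x_n)=p_n^{\beta}(x_1,\ldots,x_{j-1},x,x_{j+1},\ldots,x_n)$, one has the exact formula $\mathbf{E}_n^\beta\big[\mathbf{1}_\Lambda(x_j)\,\|\ell_j\|_\beta^\beta\big]=|\Lambda|$. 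Combining this with the Ledoux--Rider localization (to justify inserting $\mathbf{1}_\Lambda$), Chebyshev, a union bound over $j$, and the Nikolskii inequality $\|f\|_\infty\lesssim n^{1/p}\|f\|_p$ with $p=\beta_n$, gives $\max_j\|\ell_j\|_\infty\lesssim n^{(2+\tau)/\beta_n}$ with probability $1-O(n^{-\tau})$; this is $O(1)$ precisely when $\beta_n\ge c_0\log n$, and Borel--Cantelli finishes. This identity, which is where the $\beta$-ensemble structure enters, does not appear in your proposal.

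Concretely, your Step~2 is the weak link. You ask for an almost-sure separation bound $|x_i-x_j|\gtrsim\mu_n$, uniformly from bulk to edge, to be proved via a union bound over collision events; but no such direct quantitative estimate is supplied, and the paper in fact derives separation \emph{from} Theorem~\ref{tub} (Theorem~\ref{sepa}, via the Bernstein argument $1=|\ell_j(x_j)-\ell_j(x_k)|\le C\|\ell_j\|_\infty\,\mathrm{s}_n$), so in the present architecture you would be reasoning in a circle. Your alternative route also fails: you want $\|\ell_j\|_{L^2}^2\lesssim \mu_n(x_j)$ from a lower MZ/sampling inequality applied to $\ell_j$, but the sampling inequality that is provable (Theorem~\ref{theosam}\,\ref{itsam}) holds only on the strict subspaces $\ww_{\lfloor n\rho\rfloor}$ with $\rho<1$, while $\ell_j\in\ww_n$; only Fekete nodes give the $\rho=1$ isometry. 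The Bessel-type bound \eqref{bessel2}, which \emph{does} apply to $\ell_j$, gives the inequality in the unhelpful direction $\|\ell_j\|^2_{L^2}\gtrsim \mu_n(x_j)$.
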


We shall find that the uniform bound of Lagrange functions in Theorem \ref{tub} gives us what we need to apply a suitable variant of the Beurling-Landau circle of ideas, which is crucial for our proof of sufficiency for equidistribution (Theorem \ref{sufbl}) as well as of several more refined results (see Section \ref{sec_a} below).

Another key ingredient in our analysis is the reproducing kernel $K_n(x,y)$ for the space $\ww_n$. As is well-known, this is the canonical correlation kernel for the ($\beta=2$) GUE $n$-point process. In particular, the $1$-point intensity is just
$$R_n^{(\beta=2)}(x)=K_n(x,x).$$

For random Hermitian matrices (i.e. $\beta=2$) powerful Riemann-Hilbert techniques are available, leading to very strong asymptotic results, see \cite{De99,Kuijlaars} for nice introductions.

While it is not clear a priori that a good asymptotic knowledge of the $\beta=2$ kernel should be useful to study the distribution in the freezing regime $\beta_n\gtrsim \log n$, we shall see below that this is indeed the case.

\subsection{Separation, sampling and interpolation, and discrepancy estimates}
\label{sec_a}
In this subsection, we present a number of refined results on the nature of the distribution of
particles in the perfect freezing regime $\beta_n\ge c_0\log n$. In addition some of our results, for example on separation, carry over to give non-trivial information also for fixed $\beta$.

\subsubsection*{Separation}
Consider a random sample $\{x_j\}_1^n$ from \eqref{eqprob}.
The closest spacing between different points, or the \emph{separation}, is an important and well studied statistic. Let us quickly recast some known results.

For the $n$-Fekete set, the separation can be precisely estimated and the scaling order is exactly the microscopic scale $\mu_n(x)$ in \eqref{munn} ranging from $n^{-1}$ in the bulk to $n^{-\frac 23}$ at the boundary (see \cite[Corollary~13.4]{LeLu2}). For the GUE ($\beta=2$), the distance between two closest neighbors scales as $n^{-\frac 43}$ \cite{Vi}. This means that the GUE shows a strong repulsion in comparison to \emph{iid} $\sigma$-distributed random variables, where separation scales as $n^{-2}$. For general values of $\beta$, the exact separation order is conjectured to be $n^{-\frac{\beta+2}{\beta+1}}$ in \cite{FeTiWe}. In fact, this conjectured separation is rigorous
for the circular $\beta$-ensemble (C$\beta$E) when $\beta$ is an integer. The proof in \cite{FeWe} depends on Selberg's integral formula. (Cf.~also \cite[Exercise~14.6.5]{Fo}.)

In view of the cited conjecture, almost sure separation at the critical scale $n^{-1}$ is not expected for any fixed $\beta>0$. However, by naively extrapolating the conjectured separation $\approx n^{-\frac{\beta+2}{\beta+1}}$ to $n$-dependent betas, it could be guessed that the Fekete-like separation order $n^{-1}$ should be achievable in the perfect freezing regime. Indeed, we will verify this expected behavior.

After these preliminaries we define the normalized \textit{spacing} of a sample $\{x_j\}_1^n$ by
\begin{align}\label{eq_sep}
	\mathrm{s}_n := \min_{j\ne k} \left\{ n\sigma_n({x}_j)\cdot|{x}_j - {x}_k|\right\},
\end{align}
where $\sigma_n(x)$ is the regularized semicircle density given in \eqref{eq_psi}. From \cite[Theorem~1.1]{LeLu3} (see also Lemma~\ref{lspinoff}), it follows that
\begin{equation}\label{voxday}R_n^{(\beta=2)}(x)=K_n(x,x)\simeq n\sigma_n(x),\end{equation}
near the droplet. In other words, the spacings are normalized according to the varying background provided by the 1-point density corresponding to $\beta=2$.

\begin{theorem}[Separation]
	\label{sepa}
	There are universal constants $c,C>0$ such that for every $\beta\ge 1$, $\tau>0$ and all integers $n\ge 1$,
	\begin{align}\label{eq_sepaa}
		\mathbf{P}_n^\beta\left(\left\{\mathrm{s}_n < c n^{-\frac {2+\tau}\beta}\right\} \right)<C  (n^{-\tau}+ e^{- n \beta /C}).
	\end{align}
	In particular, for $\beta_n\ge c_0 \log n$ there is a positive constant $s_0=s_0(c_0)$ such that the sample is uniformly $s_0$-separated:
	\begin{align}
		\label{eq_sepa}
		\liminf_{n\to\infty} \mathrm{s}_n \ge s_0,\qquad \text{almost surely}.
	\end{align}
\end{theorem}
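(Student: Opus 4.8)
The plan is to prove the tail bound \eqref{eq_sepaa} directly from the joint density \eqref{eqprob}, and then deduce the almost-sure uniform separation \eqref{eq_sepa} by Borel--Cantelli. For the tail bound, fix a small scale $\delta = c n^{-(2+\tau)/\beta}$ and estimate the probability that some pair of particles is within normalized distance $\delta$. The event $\{\mathrm{s}_n < \delta\}$ means there are indices $j \ne k$ with $n\sigma_n(x_j)|x_j - x_k| < \delta$. By a union bound over the $\binom{n}{2}$ pairs it suffices to control, for a fixed pair, the probability that $|x_j - x_k| < \delta/(n\sigma_n(x_j))$. First I would use the localization Theorem \ref{lemmaprob} to restrict attention to the event that all particles lie in a fixed $O(1)$-neighbourhood of the droplet, say $[-3,3]$; on the complement the probability is already bounded by $Ce^{-n\beta\varepsilon^{3/2}/C}$ for a fixed small $\varepsilon$, which is of the stated form. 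On the good event, $\sigma_n(x_j) \ge n^{-1/3}$, so $\delta/(n\sigma_n(x_j)) \le \delta n^{-2/3}$; but it is cleaner to keep the $\sigma_n$-weight and observe that the relevant length scale is at most $\delta/(n\sigma_n(x_j))$, which over the region where $\sigma_n \asymp \sigma$ behaves like $\delta/(n\sigma(x_j))$.

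The core estimate is an integration argument against the Boltzmann factor. Writing $p_n^\beta$ as in \eqref{eqprob}, I would integrate out all variables except $x_j$ and $x_k$ and bound the marginal density of the pair $(x_j, x_k)$. The key point is that the factor $|x_j - x_k|^\beta$ in the density \emph{suppresses} coincidences: integrating $|x_j - x_k|^\beta$ over the strip $\{|x_j - x_k| < r\}$ (with $x_j$ ranging over an $O(1)$ set) produces a gain of order $r^{\beta+1}$. The remaining factors $\prod_{i \ne j,k} |x_i - x_j|^\beta |x_i - x_k|^\beta$ and the Gaussian weights, after pulling out the two distinguished variables, are comparable (up to a constant raised to a power controlled by the normalization) to the partition function $Z_{\beta,n}$ of the remaining $n-2$ particles together with harmless $O(C^n)$-type corrections coming from comparing $V$ at nearby points and from the ratio of normalizing constants; these are absorbed into the $e^{-n\beta/C}$ and constant prefactors. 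Concretely, after the union bound one gets a bound of the form
\begin{align*}
\mathbf{P}_n^\beta(\{\mathrm{s}_n < \delta\}) \le C n^2 \cdot C^n \cdot \Big(\tfrac{\delta}{n}\Big)^{\beta+1} n^{?} + Ce^{-n\beta/C},
\end{align*}
and choosing $\delta = c n^{-(2+\tau)/\beta}$ with $c$ small makes $n^2 (\delta/n)^{\beta+1}$ of order $n^{2 - (2+\tau) (\beta+1)/\beta - (\beta+1)} = n^{-\tau - \text{(positive)}} \le n^{-\tau}$, while the $C^n$ factor is killed by the genuine exponential decay that the full partition-function comparison provides once one works on the localized event (this is why the $e^{-n\beta/C}$ term appears). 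I expect the bookkeeping of the partition-function ratio $Z_{\beta,n-2}/Z_{\beta,n}$ and the exponential factors — making sure the spurious $C^n$ is actually beaten, rather than merely matched — to be the main obstacle, and the cleanest route is probably to mimic the standard Fekete-comparison: replace a near-colliding pair by a well-separated configuration and estimate the energy difference, so that the density at a near-collision is exponentially smaller than at a ``typical'' configuration.

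Finally, for the almost-sure statement \eqref{eq_sepa}: in the perfect freezing regime $\beta_n \ge c_0 \log n$, pick $\tau = 1$ (say) in \eqref{eq_sepaa}, so that with $s_0 = c$ chosen accordingly one has $\mathbf{P}_n^{\beta_n}(\{\mathrm{s}_n < c n^{-3/(c_0 \log n)}\}) \le C(n^{-1} + e^{-n c_0 \log n / C})$. Since $n^{-3/(c_0 \log n)} = e^{-3/c_0}$ is a constant, this says $\mathbf{P}_n^{\beta_n}(\{\mathrm{s}_n < s_0\}) \le C(n^{-1} + e^{-n c_0 \log n/C})$ for a fixed $s_0 = s_0(c_0) > 0$. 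Wait — $n^{-1}$ is not summable; to apply Borel--Cantelli directly I would instead take $\tau = 2$, giving a summable bound $O(n^{-2})$, and a correspondingly adjusted $s_0 = c e^{-4/c_0}$. Then $\sum_n \mathbf{P}_n^{\beta_n}(\{\mathrm{s}_n < s_0\}) < \infty$, so by the Borel--Cantelli lemma, $\mathbf{P}$-almost surely $\mathrm{s}_n \ge s_0$ for all sufficiently large $n$, which is exactly \eqref{eq_sepa}.
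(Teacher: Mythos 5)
Your proposal is a genuinely different strategy from the paper's, and it has a real gap that you yourself put your finger on: the partition-function bookkeeping is not carried out, and without it the argument does not close.

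Concretely, your plan is to union-bound over $\binom{n}{2}$ pairs, integrate out the colliding pair against the Boltzmann factor, and compare the resulting expression to $Z_{\beta,n}$. The obstruction is exactly the one you flag. After you fix $(j,k)$ and try to integrate $x_j$ over a strip of width $r$ around $x_k$, the remaining density contains $\prod_{i\neq j,k}|x_i-x_j|^\beta$, which for $x_j\approx x_k$ collapses onto $\prod_{i\neq j,k}|x_i-x_k|^\beta$ --- so you end up comparing against a configuration with a \emph{doubled charge} at $x_k$, not against $Z_{\beta,n-1}$ or $Z_{\beta,n-2}$. The ratio of these objects to $Z_{\beta,n}$ is not ``harmless $O(C^n)$''; it is precisely a free-energy difference, and estimating it for general $\beta$ with uniform constants (and with the position-dependent scale $\delta/(n\sigma_n(x_j))$) is as hard as the theorem itself. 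Your suggested repair (``replace a near-colliding pair by a well-separated configuration and estimate the energy difference'') is the Fekete-comparison heuristic, but at finite temperature one must also compare entropies/normalizations, which is where the argument stalls. You correctly identify this as ``the main obstacle'' but do not resolve it; as written, the proof does not go through.

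The paper takes a completely different route that sidesteps the normalization problem. The engine is the distributional identity for the random Lagrange polynomials $\ell_j$ (Lemma \ref{lemmavar}): $\mathbf{E}_n^\beta\bigl[\int_\R 1_\Lambda(x_j)|\ell_j(x)|^\beta\,dx\bigr]=|\Lambda|$, which follows from the exact relation $|\ell_1(x)|^\beta p_n^\beta(x_1,\dots,x_n)=p_n^\beta(x,x_2,\dots,x_n)$ and requires \emph{no} knowledge of $Z_{\beta,n}$. Combined with the localization Theorem \ref{lemmaprob} and Chebyshev, this yields $\mathbf{P}_n^\beta(\max_j\|\ell_j\|_\beta^\beta > n^{1+\tau})\lesssim n^{-\tau}+e^{-n\beta/C}$ (Lemma \ref{lemmabound}). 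Then a Nikolskii inequality upgrades the $L^\beta$ bound to $\|\ell_j\|_\infty\lesssim n^{(2+\tau)/\beta}$, and a Bernstein inequality (Theorem \ref{theop}, with the weight $\psi_n\simeq n\sigma_n$) applied to $1=|\ell_j(x_j)-\ell_j(x_k)|=|\int_{x_j}^{x_k}\ell_j'|$ gives $1\lesssim\|\ell_j\|_\infty\cdot\mathrm{s}_n$. This is deterministic at the level of the separation once $\|\ell_j\|_\infty$ is controlled, and it automatically handles the varying scale $n\sigma_n$ without distinguishing bulk and edge. Your Borel--Cantelli step at the end (taking $\tau=2$ for summability, yielding a constant $s_0=s_0(c_0)$ since $n^{(2+\tau)/\beta_n}$ is bounded when $\beta_n\geq c_0\log n$) is essentially the same as in the paper and is fine.

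If you want to pursue your direct approach, the missing piece is a quantitative bound on the conditional normalizing constant $\int_\R\prod_{i\geq 2}|x-x_i|^\beta e^{-n\beta V(x)/2}\,dx$ uniformly over configurations in the localized event; but you should note that this integral is exactly $\bigl(\int_\R|\ell_1(x)|^\beta dx\bigr)\cdot\prod_{i\ge 2}|x_1-x_i|^\beta e^{-n\beta V(x_1)/2}$, so estimating it is equivalent to estimating $\|\ell_1\|_\beta^\beta$ --- which is what the paper's Lemma \ref{lemmavar} does in expectation, cleanly and without the $C^n$ worries.
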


We stress that the normalization by $n\sigma_n$ in \eqref{eq_sep} allows us to study the microscopic scale \emph{uniformly along the droplet}, without having to distinguish between bulk and Airy regimes, and delivers \emph{global conclusions}. In contrast, results in the literature \cite{Fo,Vi} are usually local or not normalized (and thus provide relevant information for the bulk, but less so in the Airy regime).

Theorem \ref{sepa} plays a central role for our proof of equidistribution in the perfect freezing regime. In addition, the result has interesting implications at fixed finite $\beta$, as the following corollary shows.

\begin{corollary} Let us denote by $$\tilde{\mathrm{s}}_n=\min_{j\ne k}\{|x_j-x_k|\}$$ the non-normalized spacing of $\{x_j\}_1^n$ at fixed inverse temperature $\beta$.
	According to \eqref{eq_sepaa}, we have almost surely a lower bound of the form
	\begin{equation}\label{spo}\tilde{\mathrm{s}}_n\gtrsim n^{-\frac{\beta+2+\tau}{\beta}}.\end{equation}
\end{corollary}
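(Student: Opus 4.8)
The plan is to obtain \eqref{spo} as an immediate consequence of the quantitative separation estimate \eqref{eq_sepaa} in Theorem \ref{sepa}, combined with an elementary deterministic comparison between the normalized spacing $\mathrm{s}_n$ of \eqref{eq_sep} and the raw spacing $\tilde{\mathrm{s}}_n$, and the first Borel--Cantelli lemma applied to $\mathbf{P}=\prod_n\mathbf{P}_n^{\beta}$.

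First I would record the comparison. Since $\sigma_n(x)=\max\{\sigma(x),n^{-1/3}\}\le\max\{1/\pi,1\}=1$ for every $n\ge1$ and every $x\in\R$ (recall $\sup_x\sigma(x)=\sigma(0)=1/\pi$), choosing indices $j^\ast\neq k^\ast$ realizing $\tilde{\mathrm{s}}_n=|x_{j^\ast}-x_{k^\ast}|$ gives, for an arbitrary configuration,
\[
\mathrm{s}_n\;\le\; n\,\sigma_n(x_{j^\ast})\,|x_{j^\ast}-x_{k^\ast}|\;\le\; n\,\tilde{\mathrm{s}}_n,
\qquad\text{hence}\qquad
\tilde{\mathrm{s}}_n\;\ge\;\frac{\mathrm{s}_n}{n}.
\]
Then I would run the probabilistic step: fix $\tau>1$ and let $A_n=\{\mathrm{s}_n<c\,n^{-(2+\tau)/\beta}\}$ with $c,C$ the universal constants from Theorem \ref{sepa}. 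By \eqref{eq_sepaa},
\[
\sum_{n\ge1}\mathbf{P}_n^{\beta}(A_n)\;\le\;C\sum_{n\ge1}\bigl(n^{-\tau}+e^{-n\beta/C}\bigr)\;<\;\infty
\]
since $\tau>1$ and $\beta\ge1$ is fixed. Borel--Cantelli then yields that $\mathbf{P}$-almost surely only finitely many $A_n$ occur, i.e.\ eventually $\mathrm{s}_n\ge c\,n^{-(2+\tau)/\beta}$; combined with the comparison above this gives, almost surely and for all large $n$,
\[
\tilde{\mathrm{s}}_n\;\ge\;\frac{c}{n}\,n^{-(2+\tau)/\beta}\;=\;c\,n^{-\frac{\beta+2+\tau}{\beta}},
\]
which is \eqref{spo}.

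There is no real obstacle here, as this is a corollary; the only two points worth flagging are the \emph{direction} of the comparison $\mathrm{s}_n\le n\,\tilde{\mathrm{s}}_n$ (which uses solely the uniform upper bound $\sigma_n\lesssim1$, not any lower bound), and the fact that the Borel--Cantelli step forces $\tau>1$ for summability of the polynomial tail. In other words, along this route the sharpest almost sure conclusion one reads off from \eqref{eq_sepaa} is $\tilde{\mathrm{s}}_n\gtrsim n^{-(\beta+3)/\beta-\eps}$ for every $\eps>0$; pushing $\tau$ below $1$ would require a tail estimate sharper than \eqref{eq_sepaa}.
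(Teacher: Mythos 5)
Your proof is correct and is exactly the route the paper intends: \eqref{spo} follows from the quantitative tail bound \eqref{eq_sepaa}, the deterministic comparison $\mathrm{s}_n \le n\,\tilde{\mathrm{s}}_n$ (which, as you note, uses only $\sigma_n\le 1$), and Borel--Cantelli applied to $\mathbf{P}=\prod_n\mathbf{P}_n^{\beta}$. Your closing observation that this route requires $\tau>1$ for summability of the polynomial tail (so the cleanest almost-sure statement is $\tilde{\mathrm{s}}_n\gtrsim n^{-(\beta+3)/\beta-\eps}$ for every $\eps>0$) is a valid caveat that the paper's terse "according to \eqref{eq_sepaa}" glosses over.
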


Note that for large $\beta$ and small $\tau>0$, the exponent $\frac {\beta+2+\tau}\beta$ in \eqref{spo} is quite close to the conjectured optimal exponent $\frac{\beta+2}{\beta+1}$ of \cite{FeTiWe}.

\subsubsection*{Sampling and interpolation}
Next we discuss sampling and interpolation inequalities in $\ww_n$. It is convenient to first recall a few facts pertaining to Fekete sets.

Since Fekete points  $\{x^*_j\}_1^n$ coincide with the zeros of the $n$-th Hermite polynomial, it follows that they are the nodes of the following \emph{Gaussian quadrature formula} in $\ww_{2n}$:
\begin{align*}
	\int_{\mathbb{R}} f(x) dx=\sum_{j=1}^n \frac{f(x_j^*)}{K_n(x^*_j, x^*_j)}, \qquad f \in \ww_{2n}.
\end{align*}
In turn, this formula implies that Fekete points satisfy the following Gaussian isometry:
\begin{align}\label{eq_iso}
	\|f\|_2^2=\sum_{j=1}^n \frac{|f(x_j^*)|^2}{K_n(x_j^*,x_j^*)}, \qquad f \in \mathcal{W}_n.
\end{align}
Our next result provides the following sampling and interpolation relations, which show that random samples $\{x_j\}_1^n$ in the perfect freezing regime can likewise be used
to discretize the $L^2$-norm of weighted polynomials, in a probabilistic sense.

\begin{theorem}[Sampling and interpolation in the perfect freezing regime]\label{theosam}
	Draw $\{x_j\}_1^n$ randomly from \eqref{eqprob} under the perfect freezing condition \eqref{eq_lowtemp} and fix $\tau>0$.  There are positive
	constants $C$ (depending on $c_0$) and $A,M,s_0$  (depending on $c_0$ and $\tau$) such that for every $n \in \N$ with probability $\mathbf{P}_n^{\beta_n}$ $\ge 1- C(n^{-\tau}+ e^{- \beta_n n /C})$, the following properties hold simultaneously:	
	
	\begin{enumerate}[label=(\roman*),ref=(\roman*)]
		\item \label{eqwid} (Width):
		\begin{align*}
			\{x_j\}_{j=1}^n\subseteq (2+M n^{-2/3})[-1,1].
		\end{align*}
		\item \label{eqsepar} (Separation):
		\begin{align*}
			\mathrm{s}_n\ge s_0.
		\end{align*}
		\item \label{itint} (Interpolation): For each $1<\rho<2$ and each sequence of values $(a_j)_{j=1}^n\subseteq \C$
		there exists $f \in \ww_{\lceil n\rho \rceil}$ such that
		\[f(x_j) = a_j, \quad j = 1, \ldots , n,\]
		and
		\begin{align}
			\label{eqinter}
			\int_\R |f|^2 \le \frac{A}{ (\rho - 1)^2} \sum_{j=1}^n \frac{|a_j|^2}{K_n(x_j,x_j)}.
		\end{align}
		\item  \label{itsam} (Sampling): For each $0 < \rho< 1$, the following inequality holds
		\begin{align}
			\label{eqsam}
			\int_\R |f|^2 \le \frac{A}{ (1-\rho)^2} \sum_{j=1}^n \frac{|f(x_j)|^2}{K_n(x_j,x_j)}, \quad f \in \ww_{\lfloor n\rho \rfloor}.
		\end{align}
	\end{enumerate}
\end{theorem}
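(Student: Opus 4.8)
\emph{Set-up and plan.} The plan is to isolate a single event of the stated probability on which the assertions become a deterministic statement about the configuration, and then to prove that deterministic statement by the Beurling--Landau circle of ideas, adapting the two-dimensional strategy. Fix $\tau>0$ and let $G_n$ be the event on which \emph{simultaneously} (a) $\{x_j\}_1^n\subseteq(2+Mn^{-2/3})[-1,1]$; (b) $\mathrm{s}_n\ge s_0$; and (c) $\max_{1\le j\le n}\|\ell_j\|_\infty\le C$. By the Ledoux--Rider localization theorem (Theorem~\ref{lemmaprob}), (a) fails with $\pp$-probability at most $n^{-\tau}$ for a suitable $M=M(\tau,c_0)$; by the separation estimate \eqref{eq_sepaa} of Theorem~\ref{sepa}, (b) fails with probability at most $C(n^{-\tau}+e^{-\beta_n n/C})$ for a suitable $s_0=s_0(\tau,c_0)$; and the quantitative form of the argument behind Theorem~\ref{tub} gives (c) off an event of probability at most $C(n^{-\tau}+e^{-\beta_n n/C})$ for a suitable $C=C(c_0)$. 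Intersecting, $\pp(G_n)\ge 1-C(n^{-\tau}+e^{-\beta_n n/C})$, and on $G_n$ statements \ref{eqwid} and \ref{eqsepar} hold by definition. It remains to prove the deterministic implication: any $n$-point configuration obeying (a)--(c) satisfies \ref{itint} and \ref{itsam} with $A=A(\tau,c_0)$.

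\emph{Interpolation.} Given $1<\rho<2$ and values $(a_j)\subseteq\C$, I would look for $f\in\ww_{\lceil n\rho\rceil}$ of the form $f=\sum_j a_j\,\ell_j R_j$, where $R_j$ is a polynomial of degree $\le\lceil n\rho\rceil-n$ with $R_j(x_j)=1$; then $\ell_j R_j\in\ww_{\lceil n\rho\rceil}$ (the extra degrees go into $R_j$, the weight is unchanged) and $f(x_k)=a_k$ because $\ell_j(x_k)=\delta_{jk}$. Using the weighted-polynomial estimates of Levin--Lubinsky and Gustavsson I would choose $R_j$ so that $\ell_j R_j$ is a ``bump'' concentrated at $x_j$: uniformly bounded on the droplet (here (c) is used), with off-diagonal decay of $|\ell_j(x)R_j(x)|$ in the rescaled variable $n\sigma_n(x_j)(x-x_j)$ on the scale $(\rho-1)^{-1}$, and with $\|\ell_j R_j\|_2^2\lesssim \big((\rho-1)\,n\sigma_n(x_j)\big)^{-1}\simeq(\rho-1)^{-1}K_n(x_j,x_j)^{-1}$, invoking \eqref{voxday}. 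Property (b) then makes the Gram matrix $\big(\langle\ell_j R_j,\ell_k R_k\rangle\big)_{j,k}$ amenable to a Schur test whose off-diagonal row sums are $O\big((\rho-1)^{-1}\big)$ times the diagonal, so that $\int_\R|f|^2\lesssim(\rho-1)^{-2}\sum_j|a_j|^2/K_n(x_j,x_j)$, which is \eqref{eqinter}.

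\emph{Sampling.} For $0<\rho<1$ and $f\in\ww_{\lfloor n\rho\rfloor}$ I would first exploit that, by Mhaskar--Saff localization, such $f$ is effectively concentrated on the strictly smaller droplet $[-2\sqrt\rho,2\sqrt\rho]$ and decays off it, so no edge is seen and all relevant scales are $\simeq n^{-1}$. The inequality \eqref{eqsam} is equivalent, by Hilbert-space duality (the adjoint of the sampling operator on $\ww_{\lfloor n\rho\rfloor}$ with weights $K_n(x_j,x_j)^{-1}$), to the existence for every $g\in\ww_{\lfloor n\rho\rfloor}$ of an expansion $g=\sum_j c_j K_{\lfloor n\rho\rfloor}(\cdot,x_j)$ with $\sum_j|c_j|^2K_n(x_j,x_j)\lesssim(1-\rho)^{-2}\|g\|_2^2$; I would build such an expansion by running the bump-multiplier construction of the previous paragraph at the \emph{sub}critical degree $\lfloor n\rho\rfloor$, the slack $1-\rho$ now playing the role of $\rho-1$. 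Equivalently — and perhaps more transparently — one proves directly a Marcinkiewicz--Zygmund inequality, comparing $\int_{V_j}|f|^2$ with $|f(x_j)|^2|V_j|$ on Voronoi cells $V_j$ and controlling the error by a Bernstein--Nikolskii inequality for $\ww_{\lfloor n\rho\rfloor}$ whose constant involves $(1-\rho)^{-1}$; summing over $j$ gives \eqref{eqsam}.

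\emph{Main obstacle.} I expect the delicate point to be the sampling bound \eqref{eqsam}, and specifically the constant's dependence on $(1-\rho)^{-2}$. Properties (b) and (c) alone cannot suffice here: a configuration that is separated but clustered (with a large empty gap) admits unit-norm functions in $\ww_{\lfloor n\rho\rfloor}$ that are small at every $x_j$, so one also needs to know that in the perfect-freezing regime the sample has no macroscopically or mesoscopically large gaps. Establishing this ``no large gaps'' estimate with probability $\ge1-C(n^{-\tau}+e^{-\beta_n n/C})$ — e.g.\ via under-/over-crowding bounds for the $\mathrm{Sine}_\beta$ and $\mathrm{Airy}_\beta$ processes combined with a union bound over $O(n)$ windows, a union bound that closes precisely because $\beta_n\gtrsim\log n$ — is, I believe, the technical heart, to be adjoined to $G_n$ as a fourth defining property without changing the probability bound. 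The remaining difficulty, more bookkeeping than conceptual, is keeping all the weighted-polynomial estimates of Levin--Lubinsky and Gustavsson uniform through the bulk-to-edge transition, where the microscopic scale $\mu_n(x)$ interpolates between $n^{-1}$ and $n^{-2/3}$.
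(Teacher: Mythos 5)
Your set-up is the right one: isolate a good event $G_n$ carrying width, separation, and a uniform bound on the Lagrange polynomials (all with the stated probability, by Theorem~\ref{lemmaprob}, Theorem~\ref{sepa}, and Lemma~\ref{lemmabound}$+$Nikolskii), and then run a deterministic Beurling--Landau argument. That is indeed what the paper does. But two of your subsequent steps have genuine problems.

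\emph{Interpolation: a weight-bookkeeping error.} Your interpolant $f=\sum_j a_j\,\ell_j R_j$, with $R_j$ a \emph{plain} polynomial of degree $\le\lceil n\rho\rceil-n$, does not lie in $\ww_{\lceil n\rho\rceil}$. Recall $\ww_m=\{q(x)e^{-\frac14 m x^2}:\deg q\le m-1\}$; the Gaussian weight itself scales with $m$. Multiplying $\ell_j\in\ww_n$ by an unweighted polynomial leaves the weight $e^{-\frac14 n x^2}$ unchanged, so $\ell_j R_j$ has the form $(\text{poly of degree}\le\lceil n\rho\rceil-1)\cdot e^{-\frac14 n x^2}$, which is \emph{not} an element of $\ww_{\lceil n\rho\rceil}$. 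The paper's construction repairs exactly this: with $\rho=1+4\varepsilon$ it takes $R_j(x)=K_{n\varepsilon}(x_j,x)^4/K_{n\varepsilon}(x_j,x_j)^4$, a fourth power of a reproducing kernel of the lower-degree space $\ww_{n\varepsilon}$. Since $K_{n\varepsilon}(x_j,\cdot)\in\ww_{n\varepsilon}$, the factor $R_j$ carries a weight $e^{-n\varepsilon x^2}=e^{-\frac14(4n\varepsilon)x^2}$, so $\ell_j R_j$ has weight $e^{-\frac14(n+4n\varepsilon)x^2}=e^{-\frac14 n\rho\,x^2}$ and polynomial degree $\le n\rho-1$, i.e.\ it genuinely lands in $\ww_{n\rho}$. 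Once that is fixed, your norm estimate (bump localization at scale $(\rho-1)^{-1}K_n(x_j,x_j)^{-1}$, Schur test using separation and $\|\ell_j\|_\infty\le C$) is a viable alternative to the paper's pointwise Cauchy--Schwarz and kernel integration (Lemmas~\ref{lemmabessel}, \ref{remsep}, \ref{lemtech}).

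\emph{Sampling: the ``main obstacle'' you identify is a misconception, created by your choice of route.} You argue that separation and bounded Lagrange functions cannot suffice and that a new probabilistic ``no large gaps'' estimate must be adjoined to $G_n$. That is not needed: properties \ref{eqwid}, \ref{eqsepar} together with $\max_j\|\ell_j\|_\infty\le C$ already close the sampling bound deterministically. The paper's mechanism is the reproducing identity
\begin{equation*}
f(x)\,K_{n\varepsilon}(x,y)^2=\sum_{j=1}^n f(x_j)\,K_{n\varepsilon}(x_j,y)^2\,\ell_j(x),
\end{equation*}
valid for $f\in\ww_{\lfloor n\rho\rfloor}$ with $\rho=1-2\varepsilon$ because the left-hand side is, as a function of $x$, an element of $\ww_n$ (degree $\le n-1$, weight $e^{-\frac14 nx^2}$), and $\{\ell_j\}$ is its Lagrange basis. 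Estimating the right-hand side by $\|\ell_j\|_\infty\le C$, Cauchy--Schwarz, the Bessel bound of Lemma~\ref{lemmabessel} and the kernel estimates of Lemmas~\ref{remsep} and \ref{lemtech}, divided through by $K_{n\varepsilon}(x,x)^2$ and integrated over $[-2,2]$ (Theorem~\ref{theores}), gives \eqref{eqsam}. The empty-gap pathology you worry about is already excluded by $\|\ell_j\|_\infty\le C$ --- a macroscopic hole forces a Lagrange function to be large there --- although the reproducing-identity route never even encounters this issue. Your Voronoi/MZ or duality approach would indeed require a separate ``no gaps'' input, which is precisely why it is the harder path; it may be salvageable via $\mathrm{Sine}_\beta$/$\mathrm{Airy}_\beta$ gap bounds and a union bound, but the paper's argument avoids it entirely.

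The remaining uniformity through the bulk-to-edge transition, which you flag as bookkeeping, is handled in the paper by the restricted-range inequality (Theorem~\ref{theores}), the diagonal and off-diagonal kernel estimates (Lemmas~\ref{lspinoff}, \ref{remsep}, \ref{leker}) and the Bernstein--Nikolskii inequalities normalized by $\psi_n\simeq n\sigma_n$; you are right that this is the chief source of technicality.
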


We remark that \ref{eqsepar} follows from Theorem~\ref{sepa}, while \ref{eqwid} follows directly from \cite[Theorem~1]{LeRi} (see Theorem~\ref{lemmaprob} above), and is restated here for completeness.

The separation condition \ref{eqsepar} readily delivers the following Bessel bound complementing \eqref{eqsam}:
\begin{align}\label{eqsam2}
	\sum_{j=1}^n \frac{|f(x_j)|^2}{K_n(x_j,x_j)} \leq B \int_\R |f|^2, \quad f \in \ww_{n},
\end{align}
for a constant $B=B(c_0)$ --- cf. Lemma~\ref{lemmabessel} below. In the approximation theory literature, the conjunction of \eqref{eqsam} and \eqref{eqsam2} is often called a \emph{Marcinkiewicz-Zygmund property}. A special example is given by the
Gaussian isometry for Fekete points \eqref{eq_iso}. To compare, the sampling relation \ref{itsam} for the Gaussian $\beta_n$-ensemble  holds only on the smaller subspace $\ww_{\lfloor n\rho \rfloor}$ with $\rho<1$, and thus, in contrast to the Fekete case, the values
\begin{align}\label{eq_a}
	f(x_1), \ldots, f(x_n)
\end{align}
are redundant. Such redundancy is however small, as the interpolation relation \ref{itint} shows that the values \eqref{eq_a} can be freely prescribed for $f \in \ww_{\lceil n\rho \rceil}$ and $\rho>1$.

It is worth pointing out that the sampling inequality in Theorem \ref{theosam} bounds the norm of a weighted polynomial on the whole line by corresponding values taken near the droplet. This is in line with the restriction inequalities for weighted
real-variable polynomials such as \cite[Theorem~4.2~(a)]{LeLu2} (Theorem~\ref{theores}). In contrast, the sampling theorem in \cite{AmRom} provides control of the norm of a complex-variable weighted polynomial only near the droplet. (This points to yet another inherent difference between the one- and two-dimensional settings.)

\subsubsection*{Discrepancy}
We now turn to the discrepancy between the number of points and the equilibrium measure at microscopic scales. We remind that the semicircle density $\sigma(x)$ is given in \eqref{eq_sc}.

\begin{theorem}[Discrepancy in the perfect freezing regime]\label{propdisc}
	Draw $\{x_j\}_1^n$ randomly from \eqref{eqprob} under the perfect freezing condition \eqref{eq_lowtemp} and fix $\tau>0$, $L> 2$.
	There are positive
	constants $C$ (universal) and $D=D(c_0,\tau)$ such that  the following assertions hold:	
	\begin{enumerate}[label=(\roman*),ref=(\roman*), itemsep=1em]
		\item (Bulk) Given any $\delta>0$, there is an $n_0=n_0(\delta,L,\tau,c_0)\in\N$ such that for every $n\ge n_0$
		\begin{align}\label{eq_ppp1}
			\mathbf{P}_n^{\beta_n}\left(\left\{
			\sup_{x \in [-2+\delta,2-\delta]}
			\left|\#\big[B(x,L/n)\big]-2\sigma(x)L\right|>D\log^{2} L\right\}\right)\le C(n^{-\tau}+ e^{- \beta_n n /C}).
		\end{align}
		\item (Boundary) There is an $n_0=n_0(L,\tau,c_0)\in\N$ such that for every $n\ge n_0$ and $x= \pm 2$,
		\begin{align}\label{eq_ppp2}
			\mathbf{P}_n^{\beta_n}\left(\left\{ \left|\#\big[B(x,(L/n)^{\frac 23})\big]-\tfrac{2}{3\pi}L\right|> D L^{\frac 45}\log^{\frac 15} L\right\}\right)\le C(n^{-\tau}+ e^{- \beta_n n /C}).
		\end{align}
	\end{enumerate}
\end{theorem}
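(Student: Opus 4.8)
The plan is to establish both the upper and lower bounds on $\#[B(x,R)]$ --- with $R=L/n$ in the bulk and $R=(L/n)^{2/3}$ at the edge --- by the Beurling--Landau method, combining the sampling and interpolation relations of Theorem~\ref{theosam} with trace asymptotics for the reproducing kernel $K_N$ of $\ww_N$. First I would pass to the high-probability event $\Omega_n$ on which the conclusions of Theorem~\ref{theosam} hold, so that $\{x_j\}_1^n$ is $s_0$-separated, contained in $(2+Mn^{-2/3})[-1,1]$, a sampling set for $\ww_{\lfloor n\rho\rfloor}$ with constant $\lesssim(1-\rho)^{-2}$ for every $0<\rho<1$, and an interpolating set for $\ww_{\lceil n\rho\rceil}$ with constant $\lesssim(\rho-1)^{-2}$ for every $1<\rho<2$; by Theorem~\ref{theosam} and \eqref{eqsam2} this event has $\P_n^{\beta_n}$-probability $\ge 1-C(n^{-\tau}+e^{-\beta_n n/C})$. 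On $\Omega_n$ everything becomes a deterministic statement about sampling/interpolating configurations, which is where the work lies; this is the one-dimensional analogue of the strategy of \cite{AmRom}.

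The core is a quantitative version of Landau's comparison between a sampling (resp.\ interpolating) sequence and the concentration operator. For $N\in\N$ and an interval $I$ let $P_N$ be the orthogonal projection of $L^2(\R)$ onto $\ww_N$ and $\mathcal{T}_{N,I}=P_N\,\mathbf{1}_I\,P_N$ the associated time--band limiting operator, so that $0\le\mathcal{T}_{N,I}\le\mathrm{id}$ and $\tr\mathcal{T}_{N,I}=\int_I K_N(y,y)\,dy$. Using the sampling inequality on $\ww_{\lfloor n\rho\rfloor}$ together with the Bessel bound \eqref{eqsam2} and the off-diagonal decay of $K_N$, one shows $\#[I]\ge\tr\mathcal{T}_{\lfloor n\rho\rfloor,I}-E_n^-(\rho,I)$; dually, the interpolation inequality on $\ww_{\lceil n\rho\rceil}$ yields $\#[I]\le\tr\mathcal{T}_{\lceil n\rho\rceil,I}+E_n^+(\rho,I)$. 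The errors $E_n^\pm$ are controlled by the size of the ``plunge region'' of $\mathcal{T}_{N,I}$ and by the failure of $\{K_N(\cdot,x_j)/K_N(x_j,x_j)^{1/2}\}$ to be a tight frame; both are in turn dominated by the mass of $K_N$ off the diagonal of $I\times I$, namely $\int_I\!\int_{I^c}|K_N(y,y')|^2\,dy\,dy'$, plus a term growing like $(1-\rho)^{-2}$ from the non-tight frame constants.

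Next I would identify the main term. By the Christoffel-function and reproducing-kernel asymptotics of Levin--Lubinsky (Lemma~\ref{lspinoff} and \cite{LeLu2,LeLu3}), and the fact that the equilibrium measure attached to $\ww_{\lfloor n\rho\rfloor}$ is the semicircle on $[-2\sqrt\rho,2\sqrt\rho]$ scaled by $n$, one gets in the bulk $\tr\mathcal{T}_{\lfloor n\rho\rfloor,B(x,L/n)}=2\sigma(x)L+O((1-\rho)L)+o(1)$ uniformly for $x\in[-2+\delta,2-\delta]$ (taking $\rho$ close enough to $1$ that the smaller droplet still contains this interval), and similarly $\tr\mathcal{T}_{\lceil n\rho\rceil,B(\pm2,(L/n)^{2/3})}=\tfrac{2}{3\pi}L+(\text{bias})$ from the Airy-regime expansion of $K_N$ near the soft edge. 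The off-diagonal integral is then estimated through local universality of $K_N$: the rescaled kernel is the sine kernel in the bulk, whose square decays like $|y-y'|^{-2}$, giving a bound $\lesssim\log L$; at the edge the relevant rescaling is the Airy kernel and the window $B(\pm2,(L/n)^{2/3})$ has mesoscopic length in Airy coordinates ($\sim L^{2/3}$, well outside the $O(1)$ Airy window), so one must use uniform estimates interpolating between the Airy and sine regimes, and the slower decay accumulates to a larger power of $L$.

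Finally I would optimise. In the bulk, balancing the bias $O((1-\rho)L)$ against the plunge/frame error $\lesssim(1-\rho)^{-2}\log L$ by a suitable choice of $1-\rho$ yields the stated error $D\log^2 L$, after a net argument in $x$ (the count $\#[B(x,L/n)]$ changing by $O(1)$ under displacements $\ll\mu_n(x)$, thanks to separation), which gives assertion (i). At the edge the same scheme with the weaker off-diagonal estimate produces the exponent $DL^{4/5}\log^{1/5}L$ of assertion (ii). The conclusion then follows by intersecting $\Omega_n$ with the localization event of Theorem~\ref{lemmaprob}. I expect the quantitative bookkeeping in the last two steps to be the main obstacle: obtaining kernel estimates uniform from the bulk all the way to the soft edge, and tracking the dependence of every constant on $\rho$ so as to extract the precise exponents --- the edge case being genuinely harder precisely because the Airy kernel's slow decay, combined with the mesoscopic size of the counting window, is what degrades the clean $\log^2 L$ of the bulk to a power of $L$.
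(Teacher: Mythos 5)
Your overall blueprint — pass to the high-probability event of Theorem~\ref{theosam}, compare $\#[I]$ to the spectrum of the concentration operator $P_N\mathbf{1}_I P_N$, extract the main term from Christoffel-function/reproducing-kernel asymptotics, and optimise a bandwidth margin against a plunge error — is the same route the paper takes. But the bookkeeping you propose cannot produce the stated exponents, and the reason points to a missing key ingredient. You control the plunge by the off-diagonal mass $\int_I\int_{I^c}|K_N|^2 = \tr(\mathcal{T}-\mathcal{T}^2)\lesssim\log L$, combined multiplicatively or additively with a factor growing polynomially in $(1-\rho)^{-1}$. With a bias $\sim (1-\rho)L$ and a plunge error $\sim(1-\rho)^{-2}\log L$, the optimal $(1-\rho)$ balances at $(\log L/L)^{1/3}$ and the resulting bulk discrepancy is $L^{2/3}\log^{1/3}L$ — a \emph{power} of $L$, not $\log^2 L$. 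The crucial point you never invoke is Lemma~\ref{lemma_e1}, the refined spectral estimate from~\cite{KaRoDa}, which bounds the number of eigenvalues of the sine-kernel concentration operator lying in $(\eta,1-\eta)$ by $\log\big(\max\{\eta^{-1},(1-\eta)^{-1}\}\big)\log L$ — logarithmic, not polynomial, in the distance of $\eta$ to $\{0,1\}$. This logarithmic threshold dependence is precisely what allows the paper to take $\gamma=L^{-1}$ (so the bias is $O(1)$) while the plunge term grows only like $\log(\gamma^{-1})\log L \simeq \log^2 L$; with only the crude trace bound $\tr(\mathcal{T}-\mathcal{T}^2)\lesssim\log L$ and a $\max\{\eta^{-1},(1-\eta)^{-1}\}$ prefactor, one cannot reach $\log^2 L$.

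Your diagnosis of \emph{why} the edge case is worse is also off. You attribute $L^{4/5}\log^{1/5}L$ to slow Airy-kernel decay accumulating into a power of $L$ in the off-diagonal integral, but in fact $\tr(\aker-\aker^2) = \V\#_{\mathrm{Airy}}[B(0,L^{2/3})] = O(\log L)$ by Soshnikov, \emph{the same order as the bulk}. What degrades at the edge is not the off-diagonal $L^2$ mass but the absence of a Karnik-type refined spectral estimate for the Airy Toeplitz operator: Lemma~\ref{lemtrace2} in the paper falls back on the crude bound $\#\{k:\eta<\lambda_k(\aker)<1-\eta\}\le\max\{\eta^{-1},(1-\eta)^{-1}\}\tr(\aker-\aker^2)$, so the threshold factor is polynomial ($\gamma^{-4}$) rather than logarithmic, and balancing $\gamma L\sim\gamma^{-4}\log L$ forces $\gamma=(\log L/L)^{1/5}$ and yields $L^{4/5}\log^{1/5}L$. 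In short, your plan correctly identifies the Beurling-Landau scheme and the role of trace asymptotics, but it lacks the sharp sine-kernel eigenvalue-count input (Lemma~\ref{lemma_e1}) that is indispensable for the bulk exponent, and it misidentifies the source of the weaker edge exponent.
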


Fixing a value $\tau>1$ and taking limit for $n\to \infty$ we get the following corollary.
\begin{corollary}\label{cordisc}
	Let $L> 2$. Under the perfect freezing condition \eqref{eq_lowtemp} there are positive constants $C$ (universal) and $D=D(c_0)$ such that almost surely the following assertions simultaneously hold:	
	\begin{enumerate}[label=(\roman*),ref=(\roman*), itemsep=1em]
		\item (Bulk) For every $y_n\to x\in(-2,2)$,
		\[\limsup_{n\to\infty} \Big|\#\big[B(y_n,L/n)\big]-2\sigma(x)L\Big|\le D\log^{2} L.\]
		\item (Boundary) For $x= \pm 2$,
		\[\limsup_{n\to\infty} \Big|\#\big[B(x,(L/n)^{\frac 23})\big]-\tfrac{2}{3\pi}L\Big|\le DL^{\frac 45}\log^{\frac 15} L.\]
	\end{enumerate}
\end{corollary}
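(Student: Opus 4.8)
The plan is to deduce Corollary \ref{cordisc} from Theorem \ref{propdisc} by a Borel--Cantelli argument, after fixing once and for all a value $\tau=2$ (any $\tau>1$ works). With this choice the $\tau$-dependent constant $D=D(c_0,\tau)$ supplied by Theorem \ref{propdisc} becomes a constant depending only on $c_0$, as required. The only two points that need a little care are the summability of the exceptional probabilities in the perfect freezing regime and the upgrade from the fixed-$x$ conclusion of Theorem \ref{propdisc} to the moving-target statement $y_n\to x$; everything else is routine bookkeeping.

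\emph{Bulk part.} Fix $\delta>0$ and, for $n\ge n_0(\delta,L,\tau,c_0)$, consider the event
\[E_n^\delta:=\Big\{\sup_{x\in[-2+\delta,2-\delta]}\big|\#[B(x,L/n)]-2\sigma(x)L\big|>D\log^{2}L\Big\}.\]
By \eqref{eq_ppp1}, $\pp(E_n^\delta)\le C(n^{-2}+e^{-\beta_n n/C})$, and under $\beta_n\ge c_0\log n$ we have $e^{-\beta_n n/C}\le e^{-(c_0/C)n\log n}=n^{-(c_0/C)n}$, so $\sum_n\pp(E_n^\delta)<\infty$ (the finitely many terms with $n<n_0$ contribute at most $n_0-1$). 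Since $E_n^\delta$ depends only on the $n$-th coordinate, $\mathbf{P}(E_n^\delta)=\pp(E_n^\delta)$, and the first Borel--Cantelli lemma applied to $\mathbf{P}$ shows that almost surely only finitely many $E_n^\delta$ occur. Intersecting over the countable family $\delta\in\{1/k:k\in\N\}$ we obtain a single almost sure event $\Omega_0$ on which, for every $k$, there is a (sample-dependent) index $N_k$ with
\[\sup_{x\in[-2+1/k,\,2-1/k]}\big|\#[B(x,L/n)]-2\sigma(x)L\big|\le D\log^{2}L,\qquad n\ge N_k.\]
Now let $y_n\to x\in(-2,2)$ be any sequence (possibly depending on the sample), and pick $k$ with $x\in(-2+1/k,2-1/k)$; then $y_n$ lies in $[-2+1/k,2-1/k]$ for all large $n$, so on $\Omega_0$,
\begin{align*}
\big|\#[B(y_n,L/n)]-2\sigma(x)L\big|
&\le\big|\#[B(y_n,L/n)]-2\sigma(y_n)L\big|+2L\,\big|\sigma(y_n)-\sigma(x)\big|\\
&\le D\log^{2}L+2L\,\big|\sigma(y_n)-\sigma(x)\big|
\end{align*}
for all large $n$. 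Since $\sigma$ is continuous and $y_n\to x$, the last term tends to $0$, whence $\limsup_n\big|\#[B(y_n,L/n)]-2\sigma(x)L\big|\le D\log^{2}L$, which is assertion (i). Here it is essential that \eqref{eq_ppp1} provides a supremum over $x\in[-2+\delta,2-\delta]$ rather than a pointwise bound, as this is exactly what allows the target $y_n$ to move.

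\emph{Boundary part.} This is simpler: apply \eqref{eq_ppp2} with $\tau=2$ separately at each of the two points $x=\pm2$. The same summability and Borel--Cantelli reasoning yields an almost sure event on which, for each $x\in\{-2,2\}$, $\big|\#[B(x,(L/n)^{2/3})]-\tfrac{2}{3\pi}L\big|\le DL^{4/5}\log^{1/5}L$ for all large $n$, so the corresponding $\limsup$ is bounded by $DL^{4/5}\log^{1/5}L$, which is assertion (ii). Finally, intersecting the finitely many almost sure events from the boundary part with the countable intersection from the bulk part gives a single almost sure event on which (i) and (ii) hold simultaneously, completing the proof. There is no substantive obstacle beyond the two bookkeeping points flagged at the outset.
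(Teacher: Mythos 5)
Your proof is correct and follows essentially the same route the paper takes: the paper states the corollary immediately after Theorem \ref{propdisc} with the one-line remark ``fixing a value $\tau>1$ and taking limit for $n\to\infty$,'' and your Borel--Cantelli argument (with $\tau=2$, countable intersection over $\delta=1/k$, and use of the supremum form of \eqref{eq_ppp1} to handle the moving target $y_n\to x$ via the triangle inequality and continuity of $\sigma$) is exactly the bookkeeping that remark is leaving to the reader.
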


We stress two important features of  Theorem \ref{propdisc}.
Firstly, in \eqref{eq_ppp1} the number of points is controlled simultaneously for every interval of length $L/n$ away from the boundary. Secondly, Theorem \ref{propdisc} concerns the critical scales
($n^{-1}$ for the bulk and $n^{-\frac 23}$ for the boundary) while several related results for fixed $\beta<\infty$ involve mesoscopic scales \cite{BaSu,BoErYa,SoWo,Wo}.
However, it is pertinent to point to \cite[Theorem~1.1]{HoPa} which implies that
for fixed $\beta <\infty$, all fixed $x_0\in(-2,2)$ and all $\varepsilon>0$, we have
\begin{align}\label{sos}\lim_{L\to \infty} \lim_{n\to \infty}
	\mathbf{P}_n^\beta \left(\left\{\left|\frac{\left|\#[B(x_0,L/n)]-2\sigma(x_0)L\right|}{\log L} -\frac{2}{\sqrt{\beta}\pi}\right|>\varepsilon \right\}\right)=0,\end{align}
see also \cite{Zh}.

There are several important differences between the convergence in \eqref{sos} and the result in \eqref{eq_ppp1}. Firstly, \eqref{eq_ppp1} involves taking a supremum over $x\in [-2+\delta,2-\delta]$ and this is not done in \eqref{sos}.
Secondly,
the estimate \eqref{eq_ppp1} holds for fixed finite $L$, not just asymptotically as $L\to\infty$. Moreover, it follows from
Theorem~\ref{necbl} that the low probability event in
\eqref{sos} does occur almost surely for infinitely many values of $n$, and, indeed, the $\lim_n$ on \eqref{sos} is not $0$ for fixed $L$.

\subsection{Comments} We now comment on various more or less related issues, provide some additional details on how our strategy compares with earlier approaches in dimension two, and so on.

In recent years, much work has been done in the study of Gaussian $\beta$-ensembles for non-classical $\beta$ (see for example \cite{BaSu,LP22,LeRi,SoWo,Tr,VaVi}), to a great extent building on the tridiagonal matrix model from \cite{DuEd}. Universality results for $\beta$-ensembles with general potentials have been derived in \cite{BoErYa,BoErYa2,ErYa}. The distribution of Fekete points ($\beta=\infty$) in the Gaussian potential is well understood due to their characterization as zeros of Hermite polynomials \cite{DKMLV,KrML,MNT} (besides proving they are nodes of the Gaussian quadrature). The separation and discrepancy results that we present here show a similar pattern for finite but $n$-dependent $\beta \to \infty$. (The opposite end, the high-temperature crossover $\beta \to 0$ is studied in \cite{NaTr,Tr}.) We point out that for traditional $\beta$-ensembles, starting with the well known Wigner surmise, various different notions of spacings have been introduced and studied in great detail. For more about this we refer to \cite{AGK22,AMP,BoErYa2,De99,DH90,ErYa,Fo,GMW,Meh,SV15,Sh14,TiRiKa} and the references there. We also mention that the $\text{Sine}_\beta$ process was shown in \cite{Le16} to converge as $\beta \to \infty$ to a randomly translated lattice.

Our present work uses ideas developed in dimension two to deal with the distribution of Fekete points \cite{AmOC}. The strategy in \cite{AmOC} is modeled on important work due to Beurling and Landau in the context of Paley-Wiener spaces, which
has been adapted to prove equidistribution in several other settings, see \cite{MR222554,MR2929058} as well as \cite{MR2006561,Seip}, for example. The book \cite{BHS} covers many related questions about minimum energy points on manifolds.

The technique from \cite{AmOC} was applied in a random setting of low temperature Coulomb gas in the papers \cite{Am2,AmRom}; in particular the idea of defining Beurling-Landau densities of random samples is from \cite{AmRom}. The idea of using random Lagrange functions was inspired by an earlier construction in complex line bundles
\cite{MR3831027}.
We refer to \cite{AMP,AmRom,AS21,CSA,F22,Fo,Lew22} and the references for many more related results and other approaches in dimension two or higher.

At the surface, our approach for proving Theorem \ref{sufbl} is similar to \cite{AmRom}: we first prove sampling and interpolation inequalities by studying random Lagrange polynomials, and then leverage these to prove discrepancy estimates elaborating on a technique that can be traced back to Landau's work on bandlimited functions \cite{MR222554}. The particulars of the one and two dimensional cases are however quite different, for several reasons. In dimension two, the same microscopic scale $1/\sqrt{n}$ typically holds globally throughout the droplet, whereas in dimension one the microscopic scale typically varies from $1/n$ in the bulk to $n^{-\frac 2 3}$ at the boundary, which requires different techniques. To deal with this, we use Bernstein, Nikolskii and restriction inequalities  from the theory of orthogonal polynomials \cite{LeLu2}, as well as Plancherel-Rotach type asymptotics for Hermite polynomials from \cite{DKMLV,Sz}.
Another difference stems from the fact that \cite{AmRom} concerns weighted polynomials in one \emph{complex} variable which are estimated using local inequalities related to Cauchy's integral formula. These are not available in the present real-variable setting, and we resort to some rather subtle estimates for weighted Gaussian polynomials on the real line (such as Lemma~\ref{lemmabessel} below) that balance local and global quantities.

For the implementation of Landau's technique in the present setting we benefit from recent spectral estimates for certain Toeplitz operators
\cite{BoJaKa,israel15,KaRoDa,KaZhWaRoDa,Os} that are finer and less asymptotic than the ones in the classical work of Landau and Widom \cite{MR593228, MR169015}.

We expect that the results which are here derived for the Gaussian potential should be universal for a large class of potentials $V(x)$ on the real line.
In order to generalize to this case using our present techniques, we would require off-diagonal estimates for the kernel $K_n(x,y)$ similar to Lemma \ref{leker} below. These should follow from uniform  asymptotics for orthogonal polynomials with respect to general exponential weights (see for example \cite{DeKrMLVeZh2,MLMi}). One would also need a general version of Theorem \ref{lemmaprob}, which seems unknown (cf. \cite{ErXu}).

Another kind of natural question would be to study the perfect freezing problem in the regime between dimensions one and two. The model case
is here the collapsing elliptic $\beta$-ensemble from the paper \cite{AB21}.

\subsection{Organization}
The article is organized as follows.
\newline In Section~\ref{pnecbl} we provide necessary and sufficient conditions for equidistribution. We show how Theorem \ref{sufbl} follows from Theorem~\ref{propdisc} and prove Theorem \ref{necbl}.
\newline In Section \ref{secpre} we recall some background results. Lemma~\ref{remsep} provides diagonal estimates for the reproducing kernel. Theorem \ref{theop}, Corollary \ref{coroniko} and Theorem \ref{theores} are Bernstein, Nikolskii and restriction inequalities for weighted polynomials.
\newline In Section~\ref{secsep} we prove almost-sure uniform bounds of Lagrange functions, Theorem \ref{tub}, and our separation result, Theorem~\ref{sepa}.
\newline In Section~\ref{secsam} we derive sampling and interpolation for weighted polynomials at the configuration points, Theorem~\ref{theosam}. 
\newline In Section~\ref{secdis} we prove the main result on discrepancy, Theorem~\ref{propdisc}.
\newline In Section~\ref{seclem} we address off-diagonal estimates for the reproducing kernel (Lemma~\ref{leker}) and the technical Lemma~\ref{lemtech} that is key to the proof of Theorem~\ref{theosam}.

\subsection{Notation}
For a function $f$, we will use $f^{-1}$ to denote $1/f$ (rather than the inverse function). Many quantities depend on the sample-size $n$, but we often drop this dependence in the notation. For expressions $a$ and $b$ depending on $n,x,y,\varepsilon$, we write $a\lesssim b$ and $a \simeq b$ to indicate there is a constant $C>0$ independent of $n,x,y,\varepsilon$ (but possibly depending on other parameters) such that $a\le C b$ and that $a\lesssim b \lesssim a$ respectively. Finally, an averaged integral is denoted by
$$\fint_E f := \frac{1}{|E|}\int_E f(x)\, dx.$$

\section{Equidistribution} \label{pnecbl}

In this section we deduce sufficiency and necessity for equidistribution in terms of the condition $\beta_n\ge c_0\log n$, namely, Theorem~\ref{sufbl} and Theorem~\ref{necbl}. We start with Theorem~\ref{sufbl} and assume for now that Theorem~\ref{propdisc} is proven.

\begin{proof}[Proof of Theorem \ref{sufbl} assuming Theorem~\ref{propdisc}]
	For the first statement \ref{bulk}, we use \eqref{eq_ppp1} in Theorem~\ref{propdisc} with $L>2$, $\tau=2$ and $\delta=1/k$ with $k\in\N$. By the Borel-Cantelli lemma,
	\[\limsup_{n\to \infty} \sup_{x \in [-2+1/k,2-1/k]}
	\left|\#\big[B(x,L/n)\big]-2\sigma(x)L\right|\le D\log^{2} L \qquad\text{almost surely.}\]
	Since $k\in \N$ was arbitrary, almost surely the last inequality holds for every $k\in\N$ simultaneously. In particular,
	\[ \sup_{\delta>0}\limsup_{n\to\infty}
	\sup_{x \in [-2+\delta,2-\delta]}
	\left|\#\big[B(x,L/n)\big]-2\sigma(x)L\right|\le D\log^{2} L \qquad\text{almost surely.}\]
	Dividing the last expresion by $2L$ and taking $L\to\infty$,
	we get \ref{bulk}.
	The second and third statements follow analogously by a Borel-Cantelli argument applied to \eqref{eq_ppp2} and \eqref{erl} respectively.
\end{proof}

Regarding necessity, we begin with an auxiliary lemma that follows from~\cite{LeRi}.

\begin{lemma} \label{conju2} Let $\beta_n$ be a sequence of inverse temperatures that is not in the perfect freezing regime, that is,
	\begin{equation}\label{lid}\liminf_{n\to\infty}\frac {\beta_n}{\log n}=0.\end{equation}
	Then
	\begin{align}
		\label{contraba}\limsup_{n\to\infty} n^{\frac 23}(2-\max_{1\le j\le n}x_j)&=\infty\qquad \text{ almost surely,}
		\intertext{and}
		\label{contraba2}\limsup_{n\to\infty} n^{\frac 23}(\max_{1\le j\le n}x_j-2)&=\infty\qquad \text{ almost surely.}
	\end{align}	
\end{lemma}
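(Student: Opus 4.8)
The plan is to extract from the Ledoux--Rider localization theorem (Theorem~\ref{lemmaprob}) a matching \emph{lower} tail bound for $\max_j x_j$ and then run a Borel--Cantelli-type argument along a subsequence where $\beta_n / \log n \to 0$. More precisely, \cite{LeRi} contains not only the upper tail estimate quoted in Theorem~\ref{lemmaprob} but also a complementary bound showing that with probability bounded below, the largest particle deviates from $2$ by at least a constant multiple of $(\beta_n n)^{-2/3}$ times a slowly growing factor; symmetrically the smallest particle deviates below $-2$. The first step is to record this two-sided statement in the form we need: there is a universal $c>0$ such that for all $n$, $\beta\ge 1$ and $0<\varepsilon\le 1$,
\[
\mathbf{P}_n^\beta\!\left(\max_{1\le j\le n} x_j < 2-\varepsilon\right)\le C e^{-n\beta\varepsilon^{3/2}/C},
\qquad
\mathbf{P}_n^\beta\!\left(\max_{1\le j\le n} x_j > 2+\varepsilon\right)\le Ce^{-n\beta\varepsilon^{3/2}/C},
\]
the first of these being the (less standard) lower-deviation bound, which in \cite{LeRi} comes from estimating the partition function ratio after conditioning a single particle to lie to the right of $2-\varepsilon$.

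The second step is the subsequence construction. Since \eqref{lid} holds, pick $n_1 < n_2 < \cdots$ with $\beta_{n_k}/\log n_k \to 0$, and in fact, passing to a further subsequence, with $\sum_k \beta_{n_k}/\log n_k < \infty$ is not quite what we want --- instead we want the events to be \emph{likely}, so we should arrange the subsequence so fast that $\beta_{n_k} \log n_k \cdot \varepsilon_{n_k}^{3/2}$ stays bounded for a suitable $\varepsilon_{n_k}$ that still beats $n_k^{-2/3}$ by an unbounded factor. Concretely, for a parameter $t_k\to\infty$ to be chosen, set $\varepsilon_k = (t_k/(n_k\beta_{n_k}))^{2/3}$. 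Then $n_k^{2/3}\varepsilon_k = (t_k/\beta_{n_k})^{2/3}\to\infty$ because $\beta_{n_k}=o(\log n_k)$ and we may take $t_k$ growing slowly, e.g.\ $t_k = \sqrt{\log n_k}$ or even $t_k\equiv$ a large constant would already give divergence if $\beta_{n_k}$ is bounded; in the general case choose $t_k$ so that $t_k/\beta_{n_k}\to\infty$ while $t_k = o(\log n_k)$, which is possible since $\beta_{n_k}=o(\log n_k)$. With this choice the lower tail bound gives $\mathbf{P}_{n_k}^{\beta_{n_k}}(\max_j x_j < 2-\varepsilon_k)\le Ce^{-t_k/C}\to 0$, so along the subsequence the event $\{n_k^{2/3}(2-\max_j x_j)\ge n_k^{2/3}\varepsilon_k\}$ has probability tending to $1$; since $n_k^{2/3}\varepsilon_k\to\infty$, this already shows $\limsup_k n_k^{2/3}(2-\max_j x_j)=\infty$ \emph{in probability} along the subsequence, and then, because the coordinates for distinct $n$ are independent under $\mathbf{P}=\prod_n \mathbf{P}_n^{\beta_n}$, a second Borel--Cantelli argument (on a sub-subsequence on which these high-probability events are genuinely summable in their complements) upgrades this to the almost sure statement \eqref{contraba}. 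The bound \eqref{contraba2} follows identically from the upper tail bound, or equivalently by applying \eqref{contraba} to the reflected ensemble $x_j\mapsto -x_j$, which has the same law.

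The main obstacle I anticipate is \emph{sharpening the subsequence choice so that independence yields the almost sure conclusion}: a single high-probability event along $n_k$ only gives convergence in probability, so one must either (i) thin $(n_k)$ further so that $\sum_k \mathbf{P}_{n_k}^{\beta_{n_k}}(\max_j x_j\ge 2-\varepsilon_k)$-complements are summable, which is automatic once $e^{-t_k/C}$ is summable, i.e.\ take $t_k = C(k+\log k)$ and check this is still $o(\beta_{n_k})^{-1}\cdot$(something)\ldots --- here one must be careful that $t_k\to\infty$ fast while still $t_k/\beta_{n_k}\to\infty$, which forces a diagonal choice of $n_k$ growing fast enough --- or (ii) invoke a Borel--Cantelli / Kolmogorov zero-one law directly on the independent sequence of events $E_k=\{n_k^{2/3}(2-\max_j x_j)> R\}$ for each fixed threshold $R$, noting $\sum_k \mathbf{P}(E_k)=\infty$ (since $\mathbf{P}(E_k)\to 1$) and independence, so $\mathbf{P}(\limsup_k E_k)=1$; intersecting over $R\in\mathbb{N}$ gives \eqref{contraba}. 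Approach (ii) is cleaner and avoids delicate rate bookkeeping, so that is the route I would take. The only remaining care is to confirm that the lower-deviation estimate from \cite{LeRi} is valid in the stated range $\beta\ge 1$ (for smaller $\beta_n$ one can compare to $\beta=1$ by a monotonicity or crude bound, but since \eqref{lid} only constrains $\liminf$, we may as well restrict the subsequence to indices with $\beta_{n_k}\ge 1$ if infinitely many exist, and otherwise $\beta_{n_k}\to 0$ along a subsequence and even cruder bounds apply).
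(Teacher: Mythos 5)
There is a genuine gap, and it is the direction of the Ledoux--Rider bound. You correctly describe in words what is needed --- a bound showing that with probability bounded \emph{below}, the largest particle is substantially below $2$ --- but what you then write down is an \emph{upper} bound on that probability,
\[
\mathbf{P}_n^\beta\!\left(\max_{1\le j\le n} x_j < 2-\varepsilon\right)\le C e^{-n\beta\varepsilon^{3/2}/C},
\]
which says the opposite: the event $\{\max_j x_j < 2-\varepsilon\}$ is \emph{unlikely}. To prove that the normalized deviation $n^{2/3}(2-\max_j x_j)$ is almost surely large infinitely often, you need a \emph{lower} bound on the probability of exactly this event, so that the second Borel--Cantelli lemma (along independent $n$) applies. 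That is what \cite[Theorem~4]{LeRi} supplies and what the paper uses: for each $k\in\N$ there is $C>1$ with $\mathbf{P}_n^{\beta_n}(\max_j x_j \le 2 - k n^{-2/3}) \ge C^{-\beta_n k^3}$ for all large $n$ (note the exponent $\beta_n k^3$, not $n\beta_n\varepsilon^{3/2}$; the lower tail is heavier than the upper tail). Your proposal uses the wrong tool and, consequently, a contradictory deduction appears in Step~2: from $\mathbf{P}(\max_j x_j < 2-\varepsilon_k)\to 0$ you assert that $\{n_k^{2/3}(2-\max_j x_j)\ge n_k^{2/3}\varepsilon_k\}$ has probability tending to $1$; but this event is precisely $\{\max_j x_j \le 2-\varepsilon_k\}$, which you have just shown to have probability tending to $0$.

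With the corrected lower bound, your approach~(ii) is indeed essentially the paper's argument and is the right route: for each fixed threshold $k$, apply the second Borel--Cantelli lemma to the independent events $E_n=\{\max_j x_j\le 2-kn^{-2/3}\}$, using $\mathbf{P}(E_n)\ge C^{-\beta_n k^3}$ and the hypothesis $\liminf_n\beta_n/\log n=0$ to secure $\sum_n\mathbf{P}(E_n)=\infty$, then intersect over $k\in\N$. One caveat on the phrasing: when you write ``$\sum_k\mathbf{P}(E_k)=\infty$ (since $\mathbf{P}(E_k)\to1$)'', this is not the mechanism even in the fixed version --- the lower bound $C^{-\beta_n k^3}$ typically tends to $0$ when $\beta_n\to\infty$, and divergence of the sum has to come from the lower bound staying merely polynomially large along a suitable subsequence of $n$, not from the probabilities approaching~$1$. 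The statement \eqref{contraba2} then follows the same way from the companion lower bound in \cite[Theorem~4]{LeRi} for $\mathbf{P}_n^{\beta_n}(\max_j x_j\ge 2+kn^{-2/3})\ge C^{-\beta_n k^{3/2}}$, as you indicate; reflecting $x_j\mapsto -x_j$ does not help here, since equations \eqref{contraba} and \eqref{contraba2} concern opposite-sided deviations of the \emph{same} extreme statistic $\max_j x_j$, not $\min_j x_j$.
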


\begin{proof}
	
	By \cite[Theorem~4]{LeRi}, there is a constant $C>1$ such that for every $k\in\N$, and all sufficiently large $n$ (depending on $k$):
	\[\mathbf{P}_n^{\beta_n}\left(\left\{\max_{1\le j\le n} x_j \le 2-kn^{-\frac 23}\right\}\right)\ge  C^{-\beta_n k^3}.\]
	
	Since, by \eqref{lid}, $\sum_n C^{-\beta_n k^3}$ diverges, the second Borel-Cantelli lemma gives, almost surely,
	\[\max_{1\le j\le n} x_j \le 2-kn^{-\frac 23},\]
	for infinitely many $n$.
	In other words,
	\[\limsup_n n^{\frac 23}\big(2-\max_{1\le j \le n} x_j\big)\ge k \qquad \text{almost surely.}\]
	Since $k$ was arbitrary, we get \eqref{contraba}.
	
	The proof of \eqref{contraba2} is identical, this time using another part of \cite[Theorem~4]{LeRi}: there is a constant $C>1$ such that for every $k\in\N$, and $n$ sufficiently large (depending on $k$):
	\[\mathbf{P}_n^{\beta_n}\left(\left\{\max_{1\le j\le n} x_j \ge 2+kn^{-\frac 23}\right\}\right)\ge  C^{-\beta_n k^{3/2}}.\qedhere\]
\end{proof}

Now we turn to the proof of Theorem \ref{necbl}.

\begin{proof}[Proof of Theorem \ref{necbl}]
	
	We start with the bulk part \ref{ZW}.
	Assume that $\beta>0$ is \textit{independent of $n$} and fix a point $x_0$ with $-2<x_0<2$.
	We shall use the fact that the microscopic limit of the processes $\{x_j\}_1^n$ at $x_0$ is the $\text{Sine}_\beta$ process
	\cite{VaVi}, which, by \cite{HoVa}, has an arbitrary large number of points within a fixed interval with positive probability.
	
	Fix $L>0$, $k\in \N$ and denote the number of particles of the $\text{Sine}_\beta$ in an interval $[0,\lambda]$ by $N_\beta(\lambda)$. It follows from  \cite[Theorem~1]{VaVi} and \cite[Theorem~1]{HoVa} that
	\[\lim_{n\to\infty} \mathbf{P}_n^{\beta}\left(\left\{\#[B(x_0,L/n)]\ge k\right\}\right)=\mathbf{P}_{\text{Sine}_\beta}\Big(\Big\{N_\beta\big(2L \sqrt{4-x_0^2}\big)\ge k\Big\}\Big)>0.  \]
	Since samples for different values of $n$ are independent, by the Borel-Cantelli lemma for every $k\in \N$,
	\begin{align*}
		\limsup_{n\to \infty} \#[B(x_0,L/n)]\ge k \qquad  \text{almost surely.}
	\end{align*}
	Hence,
	\begin{equation}
		\label{rbc}\limsup_{n\to \infty} \#[B(x_0,L/n)]= \infty \qquad  \text{almost surely.}
	\end{equation}
	We now take $L_j=1/j$ with $j\in\N$ and use a monotonicity argument, to conclude that, almost surely, \eqref{rbc} holds simultaneously for every $L>0$, which proves the first equality in \ref{ZW}.
	
	Similarly, from  \cite[Theorem~5]{VaVi} for every $L>0$,
	\[\lim_{n\to\infty} \mathbf{P}_n^{\beta}\left(\left\{\#[B(x_0,L/n)]=0 \right\}\right)=\mathbf{P}_{\text{Sine}_\beta}\Big(\Big\{N_\beta\big(2L \sqrt{4-x_0^2}\big)=0 \Big\}\Big)>0,\]
	Analogous Borel-Cantelli and monotonicity arguments show that almost surely for every $L>0$,
	\begin{align*}
		\liminf_{n\to \infty} \#[B(x_0,L/n)]=0,
	\end{align*}
	which completes the proof of \ref{ZW}.
	
	Regarding the edge part \ref{ZE}, suppose that the condition $\beta_n\gtrsim \log n$ fails, that is,
	\begin{equation*}\liminf_{n\to\infty}\frac {\beta_n}{\log n}=0.\end{equation*}
	
	By \eqref{contraba} in Lemma \ref{conju2} we conclude that almost surely, for each $L$ and each $n_0$ there is $n\ge n_0$ such that $\#[B(2,(L/n)^{\frac 23})]=0$.
	Hence almost surely for every $L>0$, $$\liminf_{n\to\infty}\#[B(2,(L/n)^{\frac 23})]=0.$$
	This proves \ref{ZE} for $x=2$. The case $x=-2$ is completely symmetric.
	
	The proof of \ref{ZZ} is analogous to \ref{ZE}. From \eqref{contraba} in Lemma \ref{conju2}, almost surely for each $L$ and each $n_0$ there is $n\ge n_0$ such that $\#\big[[2+Ln^{-\frac 23},\infty)\big]\ge 1$. In other words,
	\[\liminf_{n\to\infty}\#\big[[2+Ln^{-\frac 23},\infty)\big]\ge 1.\qquad \text{almost surely.}\qedhere\]
\end{proof}

\section{Further auxiliary facts}\label{secpre}
We recall some well-known estimates for the reproducing kernel $K_n$ of the space $\mathcal{W}_n$ of weighted polynomials, given by
\[K_n(x,y)=\sum_{k=0}^{n-1}p_k(x)p_k(y),\]
where the functions $p_k$ are weighted and suitably scaled Hermite polynomials (see \eqref{eqherm} for the definition). 
Notice that the kernel $K_n$ is real-valued and symmetric: $K_n(x,y)=K_n(y,x)$.

It is convenient to define the following function, which is $n$ times a suitable modification of the regularized semicircle density defined in \eqref{eq_psi}:
\begin{equation}\label{ex_psi}\psi_n(x):=n\cdot \sqrt{\max\{1-|x|/2,n^{-\frac 23}\}}\simeq n\sigma_n(x).
\end{equation}
This function has an important relationship with the
diagonal values $K_n(x,x)$, as the following result from \cite[Theorem~1.1]{LeLu3} shows.

\begin{lemma}\label{lspinoff}
	For each $L>0$, there are constants $C_1,C_2>0$ such that for every $|x|\le 2+Ln^{-\frac 23}$ we have
	\begin{align*}
		C_1 \psi_n(x) \le K_n(x,x) \le C_2 \psi_n(x).
	\end{align*}
	Moreover, the right-hand side inequality holds for every $x\in \R$.
\end{lemma}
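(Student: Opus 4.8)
The plan is to identify $K_{n}(x,x)$ with (the reciprocal of) a Christoffel function for a Freud-type weight and then reduce to classical Hermite asymptotics by rescaling. Concretely, two elements $q_{1}e^{-nx^{2}/4},q_{2}e^{-nx^{2}/4}$ of $\ww_{n}$ pair in $L^{2}(\R)$ as $\int q_{1}q_{2}\,e^{-nx^{2}/2}\,dx$, so an orthonormal basis of $\ww_{n}$ is obtained by normalising the functions $x\mapsto H_{k}(\sqrt{n/2}\,x)\,e^{-nx^{2}/4}$, $0\le k\le n-1$, where $H_{k}$ are the classical (physicists') Hermite polynomials. Hence, writing $\tilde h_{k}$ for the $L^{2}(\R)$-normalised weighted Hermite functions,
\begin{equation*}
K_{n}(x,x)=\sum_{k=0}^{n-1}\tilde h_{k}(x)^{2},
\end{equation*}
and after the substitution $x=u\sqrt{2/n}$ this is exactly the diagonal Christoffel--Darboux kernel for the unit-scale Hermite weight at the point $u$, whose edge sits at $u=\pm\sqrt{2n}$, i.e.\ at $x=\pm 2$. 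The variational formula $K_{n}(x,x)=\max\{|f(x)|^{2}/\|f\|_{2}^{2}:f\in\ww_{n}\setminus\{0\}\}$ already recorded in the paper makes this identification with a Christoffel function immediate.

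First I would handle the bulk. For $|x|\le 2-\delta$, Plancherel--Rotach asymptotics for Hermite polynomials (see \cite{Sz}, or the uniform versions in \cite{DKMLV}) give $K_{n}(x,x)=\tfrac{n}{2\pi}\sqrt{4-x^{2}}\,\bigl(1+o(1)\bigr)$, and averaging out the oscillatory correction yields a matching lower bound; since $\sqrt{4-x^{2}}\simeq\sqrt{1-|x|/2}$ on $[-2,2]$, this gives $K_{n}(x,x)\simeq n\sqrt{1-|x|/2}=\psi_{n}(x)$ there. Next the edge: for $\bigl|\,|x|-2\,\bigr|\lesssim n^{-2/3}$ the Airy regime applies and $K_{n}(x,x)\simeq n^{2/3}$, while on this range $\max\{1-|x|/2,n^{-2/3}\}\simeq n^{-2/3}$, so again $K_{n}(x,x)\simeq n\cdot n^{-1/3}=\psi_{n}(x)$. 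Patching the two descriptions on the overlap $1-|x|/2\simeq n^{-2/3}$ — where both are valid and comparable — delivers $C_{1}\psi_{n}(x)\le K_{n}(x,x)\le C_{2}\psi_{n}(x)$ uniformly for $|x|\le 2+Ln^{-2/3}$. More economically, this entire step can simply be imported from \cite[Theorem~1.1]{LeLu3} (or from the Christoffel-function estimates of \cite{LeLu2}) after the rescaling above.

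For the global upper bound I would argue separately on the exterior $|x|\ge 2+Ln^{-2/3}$, where $\psi_{n}(x)=n^{2/3}$. Here the point is that the weighted Hermite functions $\tilde h_{k}$ are essentially supported on $[-2,2]$: by the Mhaskar--Rakhmanov--Saff infinite-finite range inequality $\|\tilde h_{k}\|_{\infty}$ is comparable to its value on $[-2,2]$, $\sup_{x\in\R}K_{n}(x,x)$ is attained up to constants near the edge and is $\simeq n^{2/3}$, and $K_{n}(x,x)$ decays (super-polynomially) as $x$ moves further into the exterior. Hence $K_{n}(x,x)\le C_{2}n^{2/3}=C_{2}\psi_{n}(x)$ for all $x\in\R$; this too is part of the statement of \cite[Theorem~1.1]{LeLu3}.

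The main obstacle is the edge analysis, and in particular the \emph{uniformity} of the comparison as $x$ crosses from the bulk into the Airy window: one must ensure that the implicit constants do not degenerate in the transition layer $1-|x|/2\simeq n^{-2/3}$, which requires either a genuinely uniform Plancherel--Rotach expansion valid simultaneously in all three regimes (as in \cite{DKMLV}) or the uniform Christoffel-function machinery of Levin--Lubinsky. The bulk and exterior contributions are comparatively soft. Since \cite{LeLu3} packages precisely the uniform statement needed, the cleanest route is to cite it after performing the elementary rescaling identifying $K_{n}$ with the relevant Christoffel--Darboux kernel, with the regime-by-regime argument above as a self-contained fallback.
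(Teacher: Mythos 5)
Your proposal is correct and matches the paper's route exactly: the paper does not prove the lemma from scratch but cites \cite[Theorem~1.1]{LeLu3}, exactly as you identify after the same rescaling that turns $\ww_n$ into a space of weighted Hermite polynomials with edges at $\pm 2$. One small slip in your fallback sketch for the exterior: you write that $\sup_{x\in\R}K_n(x,x)\simeq n^{2/3}$ is ``attained near the edge''; in fact the global supremum is $\simeq n$ and sits in the bulk (where $\psi_n\simeq n$), while it is only the supremum over the exterior region $\{|x|\ge 2-Cn^{-2/3}\}$ that is $\simeq n^{2/3}$, which is what your argument actually needs. This does not affect your main route through \cite{LeLu3}.
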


Note that in the perfect freezing regime, thanks to the localization in Theorem \ref{lemmaprob}, the functions $\psi_n(x)$ and $K_n(x,x)$ are equivalent on the range
of almost every $\{x_j\}_1^n$ when $n$ is large enough.

The following lemma provides some additional diagonal estimates for the kernel complementing Lemma~\ref{lspinoff}. (Here and in what follows $\sigma(x)$ is always the semicircle law \eqref{eq_sc}.)

\begin{lemma}
	\label{remsep}
	The following estimates hold:
	\begin{enumerate}[label=(\roman*),ref=(\roman*)]
		\item \label{remit2} For $L>0$, every $0<\varepsilon<1$ and $|x|\le 2+Ln^{-\frac 23}$ we have
		\[K_{n}(x,x)\lesssim \frac{1}{\varepsilon} K_{n\varepsilon}(x,x).\]
		\item \label{remit3} For $\alpha\in (0,2)$ and uniformly on $|x|<\alpha$,
		\[ \frac{K_n(x,x) }{n}=\sigma(x)\cdot (1+\so(1)).\]
	\end{enumerate}
\end{lemma}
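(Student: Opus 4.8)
The plan is to deduce both estimates from the Plancherel--Rotach asymptotics for Hermite polynomials together with the identification of $K_n(x,x)$ as $R_n^{(\beta=2)}(x)$, the one-point intensity of the GUE. Recall that with the scaling conventions of \eqref{eq_wp}, $K_n(x,x)=\sum_{k=0}^{n-1} h_{k,n}(x)^2$, where $\{h_{k,n}\}$ is the orthonormal basis of $\ww_n$ coming from (rescaled) Hermite functions; explicitly, writing $H_k$ for the standard Hermite polynomials and tracking the $e^{-\frac14 n x^2}$ weight and the dilation $x\mapsto \sqrt{n/2}\,x$ (or whatever normalization makes $V(x)=\tfrac12 x^2$ the relevant potential), one gets $K_n(x,x)$ as the Christoffel--Darboux kernel on the diagonal. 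I would first record this dictionary carefully so that the classical asymptotics from \cite{DKMLV,Sz} can be quoted verbatim. For part \ref{remit3}, in the bulk region $|x|<\alpha<2$ the Plancherel--Rotach asymptotics give $\tfrac1n K_n(x,x)=\sigma(x)(1+o(1))$ with the error uniform on compact subsets of $(-2,2)$ — this is essentially the statement that the GUE empirical density converges to the semicircle with an oscillatory correction of lower order, and the uniformity on $|x|\le\alpha$ is standard (see e.g.\ \cite{DKMLV,Kuijlaars}). So part \ref{remit3} is a direct citation once the normalization is pinned down.

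For part \ref{remit2}, the idea is to compare $K_n(x,x)$ with $K_{n\varepsilon}(x,x)$ (interpreting $n\varepsilon$ as $\lfloor n\varepsilon\rfloor$) by combining the upper and lower bounds of Lemma \ref{lspinoff}. On the one hand, Lemma \ref{lspinoff} gives $K_n(x,x)\le C_2\psi_n(x)$ for all $x\in\R$, and on the other hand $K_{n\varepsilon}(x,x)\ge C_1\psi_{n\varepsilon}(x)$ — but here one must be careful: the lower bound in Lemma \ref{lspinoff} is only asserted for $|x|\le 2+L(n\varepsilon)^{-2/3}$, so I would need to check that the hypothesis $|x|\le 2+Ln^{-2/3}$, being more restrictive, still falls inside the region where the lower bound for $K_{n\varepsilon}$ applies (it does, since $n^{-2/3}\le (n\varepsilon)^{-2/3}$ for $\varepsilon\le 1$, possibly after enlarging the constant $L$ in the citation of Lemma \ref{lspinoff} for the smaller parameter $n\varepsilon$). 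Then the claim reduces to the elementary inequality
\begin{align*}
\psi_n(x) \lesssim \frac1\varepsilon \psi_{n\varepsilon}(x),\qquad |x|\le 2+Ln^{-2/3},
\end{align*}
which from the explicit formula \eqref{ex_psi} becomes
\begin{align*}
n\sqrt{\max\{1-|x|/2,\,n^{-2/3}\}} \;\lesssim\; \frac1\varepsilon\, n\varepsilon\,\sqrt{\max\{1-|x|/2,\,(n\varepsilon)^{-2/3}\}}
= n\sqrt{\max\{1-|x|/2,\,(n\varepsilon)^{-2/3}\}},
\end{align*}
and this holds trivially (in fact with constant $1$) because $\max$ is monotone in its second argument and $n^{-2/3}\le (n\varepsilon)^{-2/3}$. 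So once the two halves of Lemma \ref{lspinoff} are lined up correctly, \ref{remit2} is immediate.

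I expect the only real point requiring care — rather than a genuine obstacle — to be the bookkeeping around the validity ranges of Lemma \ref{lspinoff} when its parameter is replaced by $n\varepsilon$ (one must either invoke Lemma \ref{lspinoff} with a larger value of $L$, or observe that $\{|x|\le 2+Ln^{-2/3}\}\subseteq\{|x|\le 2+L(n\varepsilon)^{-2/3}\}$ directly), and, for part \ref{remit3}, getting the normalization of Hermite asymptotics to match the conventions of \eqref{eq_wp} so that the leading term is exactly $n\sigma(x)$ and the error is uniform on $|x|<\alpha$. Neither involves new ideas; both are routine once the references \cite{DKMLV,Sz,Kuijlaars} are invoked with the correct scaling.
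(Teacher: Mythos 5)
Your proposal is correct and follows essentially the same route as the paper: part \ref{remit3} is a direct citation of bulk asymptotics for the diagonal of the Christoffel--Darboux kernel (the paper quotes a rescaled version of \cite[Theorem~1.25]{LeLu2} rather than invoking Plancherel--Rotach directly, but that is only a choice of reference), and part \ref{remit2} reduces exactly as you say to Lemma~\ref{lspinoff} plus a comparison between $\psi_n$ and $\psi_{n\varepsilon}$. Your one-line observation that $\psi_n(x)\le \tfrac{1}{\varepsilon}\psi_{n\varepsilon}(x)$ by monotonicity of the $\max$ in \eqref{ex_psi} is a slightly cleaner packaging of the paper's two-case computation ($|x|\le 2-2n^{-2/3}$ versus $2-2n^{-2/3}\le|x|\le 2+Ln^{-2/3}$), but the content is identical, and your checking of the validity range for the lower bound of Lemma~\ref{lspinoff} at parameter $n\varepsilon$ is exactly the point that needs care.
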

\begin{proof}
	The second statement \ref{remit3} is a rescaled version of \cite[Theorem~1.25]{LeLu2}.
	
	Regarding \ref{remit2}, from Lemma~\ref{lspinoff} we have if $|x|\le 2-2n^{-\frac 23}$ that
	\begin{align*}K_n(x,x) &\lesssim n \sqrt{1-|x|/2} \le \frac{1}{\varepsilon} n\varepsilon \sqrt{\max\{1-|x|/2,(n\varepsilon)^{-2/3}\}} \\
		&\lesssim \frac{1}{\varepsilon} K_{n\varepsilon}(x,x),\end{align*}
	while if $2-2n^{-\frac 23}\le |x|\le 2+Ln^{-\frac 23}$,
	\begin{align*}K_n(x,x) &\lesssim n^{1/3}\le \frac{1}{\varepsilon} (n\varepsilon)^{1/3}\le\frac{1}{\varepsilon} n\varepsilon \sqrt{\max\{1-|x|/2,(n\varepsilon)^{-2/3}\}}\\
		&\lesssim \frac{1}{\varepsilon} K_{n\varepsilon}(x,x). \qedhere\end{align*}
\end{proof}

Now we turn to three important tools in the theory of orthogonal polynomials: Bernstein, Nikolskii and restriction inequalities.

\begin{theorem}[Bernstein-type inequality]\label{theop}
	For every $1\le p \le \infty$ there is a constant $C>0$ such that for every $f\in\ww_n$,
	\begin{align*}
		\|f'/\psi_n\|_p \le C \|f\|_{p},
	\end{align*}
	where $\psi_n(x)$ is given in \eqref{ex_psi}.
\end{theorem}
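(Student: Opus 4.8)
The plan is to reduce the weighted inequality $\|f'/\psi_n\|_p \le C\|f\|_p$ to a known Bernstein inequality for weighted polynomials from the Levin--Lubinsky theory. First I would write $f(x) = q(x)e^{-\frac14 n x^2}$ with $\deg q \le n-1$, compute $f'(x) = \big(q'(x) - \frac12 n x\, q(x)\big)e^{-\frac14 n x^2}$, and observe that the problem is to bound $\|(q' - \frac12 n x q)e^{-\frac14 nx^2}/\psi_n\|_p$ by $\|q e^{-\frac14 nx^2}\|_p$. The natural approach is a rescaling: the Gaussian weight $e^{-\frac14 n x^2}$ is, after the substitution $x = \sqrt{2}\,t$ (or a similar affine normalization), a rescaled version of the Freud weight $e^{-t^2}$, for which Bernstein--Markov inequalities of exactly this shape are classical (see e.g. Levin--Lubinsky \cite{LeLu2}, and the function $\psi_n$ is, up to constants, the Levin--Lubinsky ``density'' $n\sigma_n$ governing the relevant metric — cf. \eqref{ex_psi}). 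Concretely, the known statement is that for polynomials $q$ of degree $\le n$ one has $\|(qW)'\, /\, \Psi_n\|_{L^p} \lesssim \|qW\|_{L^p}$ where $W = e^{-t^2}$ and $\Psi_n(t) \simeq n\sqrt{\max\{1-(t/a_n)^2, n^{-2/3}\}}$ with $a_n \simeq \sqrt n$ the Mhaskar--Rakhmanov--Saff number; unwinding the affine change of variables that sends the MRS interval to $[-2,2]$ turns $\Psi_n$ into precisely our $\psi_n$ from \eqref{ex_psi}.

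The key steps, in order, are: (1) record the affine substitution $x \mapsto x/\sqrt{2}$ (or whatever normalization matches the convention of the cited Bernstein inequality) that transforms $\ww_n$ into the standard space of Freud-weighted polynomials and tracks how the $L^p$ norm and the derivative transform (a Jacobian factor $n^{1/2}$ on $f'$, absorbed into the definition of $\psi_n$); (2) invoke the infinite-finite range / restriction philosophy so that the $L^p$ norm over $\R$ is comparable to the norm over a slightly enlarged MRS interval — this is where the regularized density $\psi_n$, rather than the bare $n\sigma(x)$, is forced upon us, since near and beyond the soft edge $|x| = 2$ the quantity $\sqrt{1-|x|/2}$ must be floored at $n^{-2/3}$; (3) apply the cited weighted Bernstein inequality on that interval; (4) transport the estimate back and check that the constant depends only on $p$, not on $n$.

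The main obstacle I expect is Step (2), the behaviour at the soft edge: on the bulk the inequality is essentially the classical Bernstein inequality for Freud weights and is routine, but the uniform control all the way up to $|x| \simeq 2 + O(n^{-2/3})$, including the tails $|x| > 2$ where the weighted polynomial decays super-exponentially, requires the full strength of the Levin--Lubinsky machinery (Mhaskar--Saff-type restriction estimates together with the matching lower bound $\psi_n \gtrsim n^{1/3}$ in the edge zone). The art is to see that the single function $\psi_n$ in \eqref{ex_psi} is exactly the right majorant that makes the inequality hold simultaneously in the bulk, the Airy-scale edge window, and the exponential tail, with one $n$-independent constant; modulo citing the appropriate theorem from \cite{LeLu2} (or its analogues for $\R$ rather than a finite interval), this is the heart of the matter and the rest is bookkeeping of the affine rescaling.
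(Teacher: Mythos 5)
Your proposal matches the paper's proof in its essential strategy: rescale the Gaussian weight $e^{-\frac14 n x^2}$ to the standard Freud weight $e^{-t^2}$ (via $t=\tfrac{\sqrt n}{2}x$, not $x=\sqrt 2\,t$, but you rightly hedge on the normalization) and invoke the Levin--Lubinsky Bernstein inequality; the paper's proof is precisely this one-line citation, namely \cite[Theorem 1.3]{LeLu} for $p=\infty$ and \cite[Theorem 1.1]{LeLu4} for finite $p$. Your step (2) about infinite--finite range restriction is not actually needed — the cited Bernstein inequalities already hold globally on $\R$ with the regularized Christoffel-type density built in — but this is an over-cautious detour rather than an error, and the rest of the plan is sound.
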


\begin{proof}
	The result follows from rescaling Bernstein's inequalities for the weight $e^{-x^2}$, namely, \cite[Theorem~1.3]{LeLu} when $p=\infty$ and \cite[Theorem~1.1]{LeLu4} when $1\le p<\infty$.
\end{proof}

\begin{corollary}[Nikolskii-type inequality]\label{coroniko}
	There is a constant $C>0$ such that for every $1\le p\le \infty$:
	\begin{align*}
		\|f\|_\infty \le C n^{\frac 1p} \|f\|_{p}, \quad f \in \ww_n.
	\end{align*}
\end{corollary}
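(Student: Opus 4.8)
The plan is to derive the Nikolskii inequality from the $L^\infty$ Bernstein inequality (Theorem~\ref{theop} with $p=\infty$) by a standard ``large interval'' argument. The case $p=\infty$ is trivial, so I fix $1\le p<\infty$ and a nonzero $f\in\ww_n$.

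First I would record the uniform bound $\psi_n(x)\le n$ for every $x\in\R$: since $\psi_n(x)=n\sqrt{\max\{1-|x|/2,n^{-2/3}\}}$, the quantity under the square root is at most $1$ when $|x|\le 2$, and equals $n^{-2/3}$ (so that $\psi_n(x)=n^{2/3}$) when $|x|\ge 2$. Feeding this into Theorem~\ref{theop} gives the crude pointwise derivative bound $|f'(x)|\le C n\|f\|_\infty$ for all $x\in\R$, where $C$ is the Bernstein constant for $p=\infty$. The point of passing through $\psi_n\le n$ on the whole line is that the rest of the argument then makes no distinction between bulk and edge and does not need to locate the supremum of $|f|$.

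Since $f=q\,e^{-nx^2/4}$ with $\deg q\le n-1$, the function $f$ is continuous and tends to $0$ at $\pm\infty$, so $\|f\|_\infty=|f(x_0)|$ for some $x_0\in\R$. I would set $\delta=1/(2C)$ and $I=[x_0-\delta/n,\,x_0+\delta/n]$, an interval of length $1/(Cn)$. For $x\in I$ the mean value theorem together with the bound on $f'$ yields $|f(x)|\ge|f(x_0)|-\tfrac{\delta}{n}\cdot Cn\|f\|_\infty=\tfrac12\|f\|_\infty$. Hence $\|f\|_p^p\ge\int_I|f|^p\ge 2^{-p}\|f\|_\infty^p\,|I|=2^{-p}\|f\|_\infty^p/(Cn)$, which rearranges to $\|f\|_\infty\le 2(Cn)^{1/p}\|f\|_p\le 2\max\{1,C\}\,n^{1/p}\|f\|_p$, a bound with constant independent of $p$ and $n$.

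There is essentially no serious obstacle; the only items requiring (minor) care are the uniform estimate $\psi_n\le n$ and the observation that on the $1/n$ scale the length of $I$ can be chosen independently of $n$ and $f$. As an aside, the special case $p=2$ also follows immediately from $|f(x)|^2\le K_n(x,x)\|f\|_2^2\le C_2\psi_n(x)\|f\|_2^2\le C_2 n\|f\|_2^2$ via Lemma~\ref{lspinoff}, but the Bernstein argument above has the advantage of covering all exponents $p$ at once.
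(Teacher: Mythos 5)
Your proposal is correct and is essentially identical to the paper's own proof: both deduce $\|f'\|_\infty\le Cn\|f\|_\infty$ from Theorem~\ref{theop} (via $\psi_n\le n$), then argue that $|f|$ remains at least $\tfrac12\|f\|_\infty$ on an interval of length $\simeq 1/n$ around its maximizer and integrate. The one cosmetic difference is that the paper does not pause to justify that $|f|$ attains its maximum; your observation that $f$ decays at $\pm\infty$ is a harmless addition.
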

\begin{proof}
	Since  $\psi_n\le n$, as a particular case of Theorem \ref{theop}, we have that $\|f'\|_\infty\le Cn \|f\|_\infty$.
	We now leverage this bound to go from the $\infty$-norm to the $p$-norm.
	Suppose that $|f|$ attains its maximum at $s\in\R$. For every $t\in\R$ such that $|s-t|\le 1/(2Cn)$ we have
	\begin{align*}|f(t)|&=|f(s)+f(t)-f(s)|\\
		&\ge |f(s)|-\|f'\|_\infty |t-s|\\
		&\ge \|f\|_\infty - \frac{Cn}{2Cn}\|f\|_\infty=\frac{1}{2}\|f\|_\infty.\end{align*}
	So we get
	\begin{gather*}
		\|f\|_\infty\le 2 (Cn)^{\frac 1p}\Big(\int_{s-1/(2Cn)}^{s+1/(2Cn)} |f(t)|^{p}\, dt\Big)^{\frac 1p}\le 2 C' n^{\frac 1p} \|f\|_{p}. \qedhere
	\end{gather*}
\end{proof}

The following restriction theorem simplifies many considerations near the boundary. (This is a point where the parallel two-dimensional theory is more subtle, see e.g. the estimates on weighted polynomials in \cite[Section III.6]{SaTo} as well as \cite{AmRom}.)

\begin{theorem}[Restricted range inequality]\label{theores}
	For every $f\in \ww_n$ we have
	\[\|f\|_2^2 \leq 2 \int_{-2}^{\,2}|f|^2.
	\]
\end{theorem}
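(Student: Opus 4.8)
The plan is to reduce the statement, via an affine change of variables, to a sharp \emph{restricted range inequality} of Mhaskar--Saff type for polynomials weighted by the standard Hermite weight, and then to invoke the precise quantitative form of such an inequality from \cite{LeLu2}. Concretely, I would substitute $x=2y/\sqrt n$: a generic $f\in\ww_n$ becomes $y\mapsto r(y)\,e^{-y^2}$ with $\deg r\le n-1$, the norm $\|f\|_2^2$ becomes a fixed multiple of $\int_\R|r(y)|^2e^{-2y^2}\,dy$, and the droplet $[-2,2]$ is mapped onto $[-\sqrt n,\sqrt n]$. The inequality to be proved is then $\int_\R|rW|^2\le 2\int_{-\sqrt n}^{\sqrt n}|rW|^2$ with $W(y)=e^{-y^2}$. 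The decisive structural fact is that $\sqrt n$ is exactly the Mhaskar--Rakhmanov--Saff number $a_n$ of $W$ for degree $n$, while $r$ has degree only $n-1$; equivalently, $[-2,2]$ is the equilibrium support of the external field $\tfrac14x^2$, which is why it is the natural cutoff.

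The key step is the sharp restricted range inequality itself: for a weighted polynomial $rW$ of degree at most $n-1$ the $L^2$ mass outside the MRS interval $[-a_n,a_n]$ is at most that inside it, so that $\int_\R|rW|^2\le 2\int_{[-a_n,a_n]}|rW|^2$. This is precisely \cite[Theorem~4.2(a)]{LeLu2} after the above rescaling — in fact \cite{LeLu2} works directly in the normalization of $\ww_n$ with droplet $[-2,2]$, so the change of variables is mere bookkeeping. The mechanism behind it (which I would not reproduce) belongs to the weighted potential theory of \cite{SaTo}: a Bernstein--Walsh/Remez-type bound $|q(x)|e^{-\frac n4x^2}\le e^{-n\,\mathfrak g(x)}\,\|q\,e^{-\frac n4(\cdot)^2}\|_{L^\infty[-2,2]}$ for $|x|\ge 2$, where $\mathfrak g>0$ is the exterior part of the equilibrium potential (of order $(|x|-2)^{3/2}$ near the edge and $\tfrac14x^2$ at infinity), combined with edge-type Nikolskii estimates on $[-2,2]$ in the spirit of Corollary~\ref{coroniko} and Theorem~\ref{theop}; since $\int_{|x|>2}e^{-2n\mathfrak g(x)}\,dx\asymp n^{-2/3}$, these balance to give the tail bound with the stated constant. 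An alternative route would expand $r$ in rescaled Hermite polynomials and use Plancherel--Rotach asymptotics \cite{DKMLV,Sz} to control directly the concentration of $\tilde H_0,\dots,\tilde H_{n-1}$ on $[-a_n,a_n]$, but this again only amounts to re-deriving the same quantitative input.

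The only genuine obstacle is sharpness. One cannot afford to enlarge the interval to $[-a_n(1+\delta),a_n(1+\delta)]$, because the gap between $a_{n-1}$ and $a_n=\sqrt n$ is of relative size only $\asymp 1/n$, far too small for the soft exponential tail bound $\int_{|x|\ge a_n(1+\delta)}|rW|^2\le e^{-cn\delta^{3/2}}\int_\R|rW|^2$ to be of any help; and a naive combination of a Bernstein--Walsh bound with the bulk Nikolskii inequality $\|f\|_{L^\infty}^2\lesssim n\|f\|_2^2$ loses a factor $n^{1/3}$. Obtaining the inequality with the \emph{precise} constant $2$ and at the \emph{exact} interval $[-2,2]$ is exactly what the refined estimates of \cite{LeLu2} provide, and this is what I would cite rather than attempt to re-prove.
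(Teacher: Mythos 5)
Your approach is exactly the paper's: after the affine rescaling $x=2y/\sqrt n$, the statement reduces to the sharp Levin--Lubinsky restricted range inequality for the Hermite weight $e^{-y^2}$ on the MRS interval $[-\sqrt n,\sqrt n]=[-a_n,a_n]$, which is what the paper cites (it invokes \cite[Theorem~4.1]{LeLu2} in the proof and refers to the same result as \cite[Theorem~4.2(a)]{LeLu2} in the surrounding discussion). Your extra remarks on why $[-2,2]$ cannot be enlarged and why the constant $2$ is not recoverable from a soft exterior bound are accurate context but do not change the argument, which in both cases is a one-step citation after rescaling.
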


\begin{proof}
	Rescaling \cite[Theorem~4.1]{LeLu2} applied to the weight $e^{-x^2}$ gives the result.
\end{proof}
\section{Uniform separation}
\label{secsep}
In this section we prove Theorems~\ref{tub} and \ref{sepa}.
We let $\{x_j\}_{j=1}^n$ be randomly drawn from \eqref{eqprob} and start by providing some estimates for the Lagrange polynomials $\ell_j$ defined in \eqref{eq_lp}. The following result is analogous to \cite[Lemma~2.5]{Am} and \cite[Lemma~3.1]{AmRom}. We provide a proof for convenience of the reader. Similar identities were used in \cite{MR3831027}.
\begin{lemma}\label{lemmavar}
	For $\Lambda\subseteq \R$ a measurable set, $\beta>0$ and $1\le j\le n$ we have
	\[\mathbf{E}_n^\beta\Big[\int_\R 1_{\Lambda}({x}_j)|\ell_j({x})|^{\beta} \, d{x} \Big]= |\Lambda|.\]
\end{lemma}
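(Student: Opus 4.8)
The identity to prove is
\[
\mathbf{E}_n^\beta\Big[\int_\R 1_{\Lambda}(x_j)\,|\ell_j(x)|^{\beta}\,dx\Big] = |\Lambda|,
\]
and the natural strategy is to expand both sides using the explicit joint density $p_n^\beta$ and recognize a change of variables. The plan is as follows. First I would write out the left-hand side as an $(n+1)$-fold integral: the expectation is an integral over $(x_1,\dots,x_n)$ against $p_n^\beta$, and inside there is an extra integral over the variable $x$ coming from $\int_\R|\ell_j(x)|^\beta\,dx$. The key observation is that the Vandermonde-type factor $\prod_{k<l}|x_k-x_l|^\beta$ appearing in $p_n^\beta$, when multiplied by $|\ell_j(x)|^\beta = \big|\prod_{k\ne j}\frac{x-x_k}{x_j-x_k}\big|^\beta e^{-\frac\beta2 n(x^2-x_j^2)}$, produces exactly the Vandermonde factor for the configuration obtained by \emph{replacing} $x_j$ with $x$. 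Indeed, the denominator $\prod_{k\ne j}|x_j-x_k|^\beta$ of $|\ell_j(x)|^\beta$ cancels precisely those factors of the original Vandermonde that involve $x_j$, while the numerator $\prod_{k\ne j}|x-x_k|^\beta$ reinstates them with $x_j$ swapped for $x$. Likewise the Gaussian weights match up: $e^{-\frac\beta2 n\cdot\frac12 x_j^2}$ from $p_n^\beta$ combines with the $e^{+\frac\beta4 n x_j^2}$ hidden in $|\ell_j(x)|^\beta$ and the factor $e^{-\frac\beta4 n x^2}$ to give $e^{-\frac\beta2 n\cdot\frac12 x^2}$.

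Carrying this out, after renaming $x$ as a new integration variable sitting in the $j$-th slot and the old $x_j$ variable as a free variable, the integrand becomes $1_\Lambda(x_j^{\mathrm{old}})\cdot p_n^\beta(x_1,\dots,x_{j-1},x,x_{j+1},\dots,x_n)$ — that is, the same density but evaluated at the swapped configuration, times the indicator in the now-decoupled old variable. More precisely I would set up the substitution so that the integral over the original $n$ variables plus the auxiliary one factors as
\[
\Big(\int_\R 1_\Lambda(t)\,dt\Big)\cdot\Big(\int_{\R^n} p_n^\beta(y_1,\dots,y_n)\,dy_1\cdots dy_n\Big) = |\Lambda|\cdot 1 = |\Lambda|,
\]
using that $p_n^\beta$ is a probability density. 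The normalization constant $Z_{\beta,n}$ never needs to be computed: it cancels because the same constant appears on both sides of the rearrangement, and the final inner integral is $1$ by definition of a probability density.

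The only genuinely delicate point is bookkeeping the absolute values and the sign/ordering conventions in the Vandermonde product: $p_n^\beta$ uses $\prod_{k<l}$ while the Lagrange polynomial uses $\prod_{k\ne j}$, so one must be careful that $|x_j - x_k|^\beta = |x_k - x_j|^\beta$ makes the pairing unambiguous and that no pair $(k,l)$ with $k,l\ne j$ is disturbed by the swap. I expect this to be the main (though entirely routine) obstacle; once the algebraic identity
\[
\Big(\prod_{k<l}|x_k-x_l|^\beta\Big)\cdot|\ell_j(x)|^\beta \cdot e^{-\frac{\beta n}{4}x^2+\frac{\beta n}{4}x_j^2}\cdot e^{-\frac{\beta n}{4}x_j^2} = \Big(\prod_{k<l}|x_k^{(j\to x)}-x_l^{(j\to x)}|^\beta\Big)\cdot e^{-\frac{\beta n}{4}\,\|\,\cdot\,\|^2\text{-type terms}}
\]
is verified at the level of the full density, Fubini–Tonelli (all integrands are nonnegative, so Tonelli applies without integrability worries) lets us exchange the order of integration freely and conclude.
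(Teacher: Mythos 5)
Your proposal is correct and is essentially the same argument the paper gives: the central observation $|\ell_j(x)|^\beta\, p_n^\beta(x_1,\ldots,x_n)=p_n^\beta(x_1,\ldots,x_{j-1},x,x_{j+1},\ldots,x_n)$, followed by Fubini--Tonelli to decouple the $x_j$-integral (which produces $|\Lambda|$) from the remaining $n$-fold integral of the probability density (which produces $1$). The only difference is cosmetic: the paper states the swap identity without spelling out the Vandermonde and Gaussian cancellations that you verify explicitly.
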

\begin{proof}
	Without loss of generality assume $j=1$.
	Notice that the density in \eqref{eqprob} obeys
	\[|\ell_1({x})|^{\beta}   p_n^{\beta}(x_1,\ldots,x_n)=  p_n^{\beta}(x,x_2,\ldots,x_n).\]
	By Fubini's theorem,
	\begin{align*}
		\mathbf{E}_n^\beta\Big[\int_\R 1_{\Lambda}({x}_1)|\ell_1({x})|^{\beta} \, d{x} \Big] &=
		\int_{\R^n} \int_\R 1_{\Lambda}({x}_1)|\ell_1({x})|^{\beta}   p_n^{\beta}(x_1,\ldots,x_n) \, d{x} dx_1\ldots dx_n
		\\ &=
		\int_\R 1_{\Lambda}({x}_j) d{x_j}  	\int_{\R^n} p_n^{\beta}(x,x_2,\ldots,x_n) \, dx dx_2,\ldots dx_n
		= |\Lambda|. \qedhere
	\end{align*}
\end{proof}
Second, we estimate the $\beta$-norm of the Lagrange polynomials.
\begin{lemma}\label{lemmabound}
	There is a universal constant $C>0$ such that for every $\tau>0$
	and every $\beta\ge 1$ we have
	\[\mathbf{P}_n^\beta\left(\left\{\max_{1\le j \le n} \|\ell_j\|_{\beta}^{\beta} > n^{1+\tau}\right\} \right)<C (n^{-\tau}+e^{-n \beta/C}).\]
\end{lemma}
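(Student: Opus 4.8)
The plan is to control $\max_j \|\ell_j\|_\beta^\beta$ via the first-moment identity in Lemma~\ref{lemmavar} combined with a union bound, after first cutting off the event where some particle escapes the droplet. First I would invoke Theorem~\ref{lemmaprob} (Ledoux--Rider) with $\varepsilon$ comparable to a constant, say $\varepsilon = 1$, to see that outside an event of probability $\le C e^{-n\beta/C}$ every particle satisfies $|x_j| \le 3$; on this good event we may integrate $|\ell_j|^\beta$ against $1_\Lambda$ with $\Lambda = [-3,3]$ without losing anything. So it suffices to bound $\mathbf{P}_n^\beta(\{\max_j \int_\R 1_{\{|x_j|\le 3\}}|\ell_j(x)|^\beta\,dx > n^{1+\tau}\})$.

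Next I would apply Lemma~\ref{lemmavar} with $\Lambda = [-3,3]$, which gives for each fixed $j$
\[
\mathbf{E}_n^\beta\Big[\int_\R 1_{\{|x_j|\le 3\}}|\ell_j(x)|^\beta\,dx\Big] = 6.
\]
By Markov's inequality, for each $j$,
\[
\mathbf{P}_n^\beta\Big(\Big\{\int_\R 1_{\{|x_j|\le 3\}}|\ell_j(x)|^\beta\,dx > n^{1+\tau}\Big\}\Big) \le \frac{6}{n^{1+\tau}}.
\]
A union bound over $j = 1,\dots,n$ then yields $\mathbf{P}_n^\beta \le 6 n^{-\tau}$ for the event that the maximum exceeds $n^{1+\tau}$ while all particles lie in $[-3,3]$. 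Combining this with the Ledoux--Rider bound on the complementary (escape) event gives the claimed estimate with a universal constant, after adjusting $C$; note one should also record the trivial range $n=1$ or small $n$ separately if needed, but the exponential term absorbs that.

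There is one point that needs a small argument rather than being purely routine: one must check that on the good event $\{|x_j| \le 3 \text{ for all } j\}$ the quantity $\|\ell_j\|_\beta^\beta = \int_\R |\ell_j(x)|^\beta\,dx$ is indeed comparable to (or bounded by a constant multiple of) $\int_\R 1_{\{|x_j|\le 3\}}|\ell_j|^\beta\,dx$ — i.e.\ that the indicator does not throw away mass. Since on the good event the indicator $1_{\{|x_j|\le 3\}}$ is identically $1$, this is immediate: the full integral equals the truncated one. So the only genuine ingredients are the localization theorem, the exact first-moment identity, Markov, and a union bound.

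The step I expect to be the mildest ``obstacle'' is simply bookkeeping the constants so that a single universal $C$ works for all $\beta \ge 1$ and all $n$: the Ledoux--Rider constant, the factor $6$ from the interval length, and the union bound factor $n$ all need to be folded together, and one has to make sure the choice $\varepsilon = 1$ (as opposed to a smaller constant) is legitimate in Theorem~\ref{lemmaprob}, which it is since the hypothesis there only requires $0 < \varepsilon \le 1$. No delicate estimate on weighted polynomials is needed here; the real work of the section lies in the subsequent deduction of Theorems~\ref{tub} and \ref{sepa} from this bound, not in Lemma~\ref{lemmabound} itself.
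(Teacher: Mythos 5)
Your proposal is correct and follows essentially the same route as the paper: localize via the Ledoux--Rider bound with $\varepsilon=1$ and $\Lambda=[-3,3]$, invoke the exact first-moment identity from Lemma~\ref{lemmavar}, apply Markov's inequality, and finish with a union bound over $j$. The paper phrases the union bound through exchangeability (reducing to $j=1$ and multiplying by $n$), but this is the same estimate, and your observation that the truncated integral equals the full one on the good event is precisely the step the paper implicitly relies on.
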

\begin{proof}
	Let $\Lambda=[-3,3]$ (any neighborhood of the droplet would work). By Theorem~\ref{lemmaprob},
	\[\mathbf{P}_n^\beta\left(\left\{\{{x}_j\}_1^n\nsubseteq \Lambda\right\}\right)=\mathbf{P}_n^\beta\left(\left\{\max_{1\le j\le n}|{x}_j|>2+1\right\}\right)\le C e^{-n \beta /C}.\]
	From this, Chebyshev's inequality and Lemma~\ref{lemmavar},
	\begin{align*}
		\mathbf{P}_n^\beta\left(\left\{\max_{1\le j \le n} \|\ell_j\|_{\beta}^{\beta} > n^{1+\tau} \right\}\right)
		&\le \mathbf{P}_n^\beta\left(\left\{\max_{1\le j \le n} \|\ell_j\|_{\beta}^{\beta} > n^{1+\tau}, \{{x}_j\}_1^n\subseteq \Lambda\right\}\right)\\
		&\quad  +
		\mathbf{P}_n^\beta\left(\left\{\{{x}_j\}_1^n\nsubseteq \Lambda\right\}\right)
		\\ &\le \mathbf{P}_n^\beta\left(\left\{\max_{1\le j \le n} 1_{\Lambda}({x}_j) \|\ell_j\|_{\beta}^{\beta} > n^{1+\tau}\right\} \right) + C e^{- n \beta /C}
		\\ &\le n \mathbf{P}_n^\beta\left(\left\{ 1_{\Lambda}({x}_1) \|\ell_1\|_{\beta}^{\beta} > n^{1+\tau}\right\} \right) + C e^{- n \beta /C}
		\\ &\le |\Lambda|n^{-\tau}+C e^{- n \beta /C}\le C'(n^{-\tau}+ e^{- n \beta /C}). \qedhere
	\end{align*}
\end{proof}

With this at hand we can prove Theorem~\ref{tub}.
\begin{proof}[Proof of Theorem~\ref{tub}]
	By Corollary \ref{coroniko}, for every $1\le j\le n$,
	\[\|\ell_j\|_{\infty}\le C n^{\frac 1{\beta_{n}}}\, \|\ell_j\|_{\beta_n}\]
	Taking $\tau=2$ in Lemma \ref{lemmabound} we have that
	\[\max_{1\le j \le n} \|\ell_j\|_{\infty}\le C n^{\frac 1{\beta_{n}}} \max_{1\le j \le n} \|\ell_j\|_{\beta_n}\le C  n^{\frac 4{\beta_{n}}}\le C e^{\frac 4{c_0}},\]
	with probability $\geq 1- C (n^{-2}+ e^{- n \beta_n /C})$. The desired conclusion thus follows from the Borel-Cantelli lemma applied to the measure $\mathbf{P}=\prod \pp$.
\end{proof}

Finally, the following observation will be helpful. 

\begin{remark}
	\label{lemma_a}
	Let $a \in \R$, $r>0$ and $x\in B(a,r/\psi_n(a))$, where $\psi_n$ is the function \eqref{ex_psi}. Then
	\[\psi_n(x) \leq \psi_n(a)\cdot \sqrt{1+\frac{r}{2}} .
	\]

	Indeed, the function $\psi_n^2$ is Lipschitz with constant $n^2/2$, since it is piece-wise linear with slopes equal to $0$ or $\pm n^2/2$.
	Hence,
	\begin{align*}
		\psi_n({x})^2&=\psi_n({x})^2-\psi_n(a)^2+\psi_n(a)^2
		\le \frac{n^2}{2}|{x}-a|+\psi_n(a)^2
		\\ & \le \frac{n^2 r}{2 \psi_n(a)}+\psi_n(a)^2
		\le  \frac{r}{2}\psi_n(a)^2+\psi_n(a)^2,
	\end{align*}
	where we used that $n^{2/3}\le \psi_n$. 
\end{remark}

We are now in position to prove the main result on separation.
\begin{proof}[Proof of Theorem \ref{sepa}] Consider a random sample $\{x_j\}_1^n$ from $\P_n^\beta$.

	Choose points $x_j$ and $x_k$ with $j\neq k$ so that the spacing $n\sigma_n({x}_j)\cdot |{x}_j-{x}_k|$ is minimal. Without loss of generality we assume ${x}_j\neq {x}_k$ since this occurs almost surely. We want to show that $s_n=n\sigma_n({x}_j)\cdot |{x}_j-{x}_k|$ is large with high probability. If $s_n\geq 1$, we are done. Assume that $s_n\leq 1$. From \eqref{ex_psi}, there is a universal constant $r>0$ such that
	\[\psi_n\le r n\sigma_n.\]
	Then, for any real ${x}$ between ${x}_j$ and ${x}_k$,
	\begin{align*}
		|x-x_j|\le \frac{1}{n\sigma_n(x_j)} \le \frac{1}{\psi_n(x_j)},
	\end{align*}
	and by Remark \ref{lemma_a},
	\begin{align*}
		\psi_n({x}) \leq \psi_n({x}_j)\sqrt{1+\tfrac{r}{2}}.
	\end{align*}
	By Theorem~\ref{theop} we get
	\begin{align}
		\label{eq_lagr}
		1 &=|\ell_j({x}_j)-\ell_j({x}_k)|=\Big|\int_{{x}_j}^{{x}_k}\ell_j'(t)\psi_n^{-1}(t)\psi_n(t)\, dt\Big|
		\\ &\le \sqrt{1+\tfrac{r}{2}}\, \|\ell_j'\psi_n^{-1}\|_\infty\,  \psi_n({x}_j)\cdot |{x}_j-{x}_k|\\
		&\le C \|\ell_j \|_{\infty}\cdot \mathrm{s}_n. \notag
	\end{align}
	By Corollary \ref{coroniko} and Lemma \ref{lemmabound}, we obtain with probability at least $$1- C'' (n^{-\tau}+ e^{- n \beta /C})$$ that
	\[1\le C \|\ell_j \|_{\infty}\cdot \mathrm{s}_n\le C' n^{\frac 1\beta} \|\ell_j \|_{\beta}\cdot \mathrm{s}_n \le C' n^{\frac {2+\tau}{\beta}}\cdot \mathrm{s}_n.\]
	This proves \eqref{eq_sepaa}. Finally \eqref{eq_sepa}, follows from \eqref{eq_lagr} and Theorem~\ref{tub}.
\end{proof}

\section{Sampling and interpolation}\label{secsam}
In this section we prove Theorem~\ref{theosam} on almost-sure sampling and interpolation.
We start by showing that a function in $\ww_n$ can be controlled by a local average and a global error term with an arbitrary trade-off parameter.
This will play the role of the Bernstein-type inequalities from the two dimensional setting. In addition, we prove Theorem \ref{tub} on uniform upper bounds of the Lagrange functions $\ell_j$.

\begin{lemma}[Bessel-type bound]\label{lemmabessel}
	Let $$\{x_j\}_1^n \subseteq[-2-Mn^{-2/3},2+Mn^{2/3}]$$ be a real configuration which is uniformly separated, i.e.,
	$$\mathrm{s}_n\geq s_0>0,$$
	where $\mathrm{s}_n$ is the normalized separation \eqref{eq_sep}.
	
	Then there exist constants $s,C>0$ depending only on $M$ and $s_0$ such that for every $\alpha\in (0,1)$, every $f\in \ww_n$ and every $J\subseteq \{1,\ldots,n\}$:
	\begin{align}
		\label{bessel}
		\sum_{j\in J} \frac{|f({x}_j)|^2}{K_n({x}_j,{x}_j)} \le \frac{1}{s\alpha} \sum_{j\in J}\int_{I_j}|f|^2 +s\alpha C \|f\|_2^2,
	\end{align}
	where $\{I_j\}_{j=1}^n$ are disjoint intervals given by
	\[I_j={x}_j+\frac{s}{K_n({x}_j,{x}_j)}[-1,1].\]
	In particular, there is a constant $C'>0$ such that
	\begin{align}
		\label{bessel2}
		\sum_{j=1}^n \frac{|f({x}_j)|^2}{K_n({x}_j,{x}_j)} \le C' \|f\|_2^2.
	\end{align}
\end{lemma}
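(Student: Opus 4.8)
The plan is to reduce the discrete sum $\sum_{j\in J}|f(x_j)|^2/K_n(x_j,x_j)$ to the integral $\sum_j \int_{I_j}|f|^2$ by comparing $|f(x_j)|^2$ with the average of $|f|^2$ over $I_j$, controlling the error by the derivative of $f$ via the Bernstein inequality (Theorem~\ref{theop}). First I would fix the constant $s$ so that the intervals $I_j = x_j + \tfrac{s}{K_n(x_j,x_j)}[-1,1]$ are pairwise disjoint: this follows from the separation hypothesis $\mathrm{s}_n\geq s_0$ together with Lemma~\ref{lspinoff} (which gives $K_n(x_j,x_j)\simeq\psi_n(x_j)\simeq n\sigma_n(x_j)$ on the range of the configuration) — if the $x_j$ are spaced at least $s_0/(n\sigma_n(x_j))$ apart, then taking $s$ a small enough multiple of $s_0$ makes the $I_j$ disjoint; here one also uses Lemma~\ref{lemma_a} to control the variation of $\psi_n$ (hence of $K_n(x,x)$) across a single $I_j$, so that the interval lengths are comparable whether measured at $x_j$ or at a neighbouring point.

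Next, for each $j\in J$ and each $t\in I_j$, write $f(x_j) = f(t) - \int_t^{x_j} f'(u)\,du$, so that
\begin{align*}
|f(x_j)| \le |f(t)| + \int_{I_j} |f'(u)|\,du \le |f(t)| + \|f'\psi_n^{-1}\|_{\infty,I_j}\int_{I_j}\psi_n(u)\,du.
\end{align*}
By Lemma~\ref{lemma_a}, $\psi_n \simeq \psi_n(x_j) \simeq K_n(x_j,x_j)$ on $I_j$, so $\int_{I_j}\psi_n \lesssim s$, and the second term is $\lesssim s\,\|f'\psi_n^{-1}\|_{\infty,I_j}$. Squaring, averaging over $t\in I_j$, and using $|I_j|\simeq s/K_n(x_j,x_j)$ gives
\begin{align*}
\frac{|f(x_j)|^2}{K_n(x_j,x_j)} \lesssim \frac{1}{s}\int_{I_j}|f|^2 + s\,\frac{\|f'\psi_n^{-1}\|_{\infty,I_j}^2}{K_n(x_j,x_j)}.
\end{align*}
At this point I would insert the trade-off parameter $\alpha$ by Young's inequality $ab \le \tfrac{1}{2}(a^2+b^2)$ applied after scaling, turning the bound into $\tfrac{1}{s\alpha}\int_{I_j}|f|^2 + s\alpha\cdot(\text{error})$; the precise bookkeeping of where $\alpha$ enters is routine. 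Summing over $j\in J$: the first terms give $\tfrac{1}{s\alpha}\sum_{j\in J}\int_{I_j}|f|^2$ since the $I_j$ are disjoint; for the error terms I bound $\sum_j \|f'\psi_n^{-1}\|^2_{\infty,I_j}/K_n(x_j,x_j)$ by first replacing the local sup-norm on $I_j$ by a local $L^2$-average over a slightly enlarged interval (using Corollary~\ref{coroniko}, the Nikolskii inequality, applied on the rescaled scale $1/K_n(x_j,x_j) \simeq 1/(n\sigma_n(x_j))$, i.e. a fixed number of "fundamental" wavelengths), and then, since these enlarged intervals have bounded overlap, summing to get $\lesssim \int_\R |f'\psi_n^{-1}|^2 \le C\|f\|_2^2$ by Theorem~\ref{theop} with $p=2$. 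This yields \eqref{bessel}. Finally \eqref{bessel2} follows by taking $J=\{1,\dots,n\}$ and choosing $\alpha$ to be any fixed constant, say $\alpha=1$, absorbing everything into $C'$.

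The main obstacle I expect is the careful handling of the "global error" term, i.e. passing from a sum of \emph{local} sup-norms $\|f'\psi_n^{-1}\|_{\infty,I_j}$ (one per configuration point) to the \emph{global} $L^2$-norm $\|f'\psi_n^{-1}\|_2$: this requires a Nikolskii-type estimate that is correctly calibrated to the varying microscopic scale $1/(n\sigma_n(x_j))$ — which ranges from $1/n$ in the bulk to $n^{-2/3}$ at the edge — and a covering argument ensuring the enlarged intervals have uniformly bounded overlap. The rescaling in Corollary~\ref{coroniko} must be applied to $\ww_n$ restricted (in a suitable sense) to a window of a bounded number of wavelengths around each $x_j$; making this rigorous, while keeping the constants dependent only on $M$ and $s_0$, is the delicate point. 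Everything else — disjointness of the $I_j$, the fundamental-theorem-of-calculus estimate, and the Young-inequality juggling of $\alpha$ — is elementary given the cited inequalities.
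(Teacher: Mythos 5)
Your overall plan---fundamental theorem of calculus plus the Bernstein inequality (Theorem~\ref{theop}) to convert a sum of point evaluations into $\|f\|_2^2$---is the same as the paper's, and the initial reduction (disjointness of $I_j$ from separation plus Lemma~\ref{lspinoff}, constancy of $\psi_n$ on $I_j$ from Lemma~\ref{lemma_a}) is correct. However, there are two genuine gaps.

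First, the insertion of the trade-off parameter $\alpha$ is \emph{not} routine, and Young's inequality cannot do it. From $|f(x_j)|\le|f(t)|+E$, squaring with a weight gives either $(1+\tfrac1\alpha)|f(t)|^2+(1+\alpha)E^2$ or $(1+\alpha)|f(t)|^2+(1+\tfrac1\alpha)E^2$: you can put the factor $\tfrac1\alpha$ on the favorable term, or put $\tfrac1\alpha$ on the error, but you can never get the simultaneous pair $(\tfrac1\alpha,\alpha)$ --- the product of the two coefficients is always $\gtrsim 1$. The actual mechanism in the paper is geometric, not algebraic: one uses a mean-value (pigeonhole) argument to produce a point $y_j$ not in all of $I_j$ but in the \emph{shrunken} interval $A_j=x_j+\alpha(I_j-x_j)$, so that $|f(y_j)|^2\le\fint_{A_j}|f|^2\le\frac1\alpha\fint_{I_j}|f|^2$, and then one bounds $|f(x_j)-f(y_j)|^2\le\bigl(\int_{A_j}|f'|\bigr)^2\le|A_j|\int_{A_j}|f'|^2$ by Cauchy--Schwarz; the factor $\alpha$ on the error is exactly $|A_j|/|I_j|$. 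This shrunken-interval trick is the heart of the proof and is missing from your plan.

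Second, you route the error term through the local sup-norm $\|f'\psi_n^{-1}\|_{\infty,I_j}$ and then want a \emph{local} Nikolskii inequality on the fundamental-wavelength scale to get back to $L^2$. Corollary~\ref{coroniko} as stated is global (on $\ww_n$, on all of $\R$), not local, and $f'$ is not an element of $\ww_n$ anyway; as you yourself concede, formalizing a local Nikolskii calibrated to $1/K_n(x_j,x_j)$ would require real work that you do not carry out. This detour is also unnecessary: applying Cauchy--Schwarz directly, $(\int_{A_j}|f'|)^2\le|A_j|\int_{A_j}|f'|^2$, and then using Lemma~\ref{lemma_a} to replace $K_n(x_j,x_j)^{-2}$ by $\psi_n(x)^{-2}$ for $x\in I_j$, converts the error to $s\alpha\int_{I_j}|f'\psi_n^{-1}|^2$; summing over the disjoint $I_j$ and invoking Theorem~\ref{theop} with $p=2$ yields $s\alpha\,C\|f\|_2^2$ with no Nikolskii argument and no overlap lemma. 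That is the paper's final step.
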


\begin{proof}[Proof of Lemma~\ref{lemmabessel}]
	Since $s_n\geq s_0$, by \eqref{ex_psi} and Lemma \ref{lspinoff} we can choose $0<s\le 1$ sufficiently small so that the intervals $I_j$ are indeed disjoint. For $\alpha \in (0,1)$ and $j \in J$ define $A_j=x_j+\alpha(I_j-x_j)$.
	Since
	\[\fint_{A_j}|f|^2=\frac{1}{\alpha |I_j|} \int _{A_j}|f|^2
	\le \frac{1}{\alpha |I_j|} \int _{I_j}|f|^2 =\frac{1}{\alpha } \fint _{I_j}|f|^2 \]
	there exists ${y}_j\in A_j$ such that
	\[|f({y}_j)|^2\le \frac{1}{\alpha} \fint_{I_j} |f|^2.\]
	As a consequence,
	\begin{align}
		\label{bound1}
		\frac{|f({x}_j)|^2}{K_n({x}_j,{x}_j)}&\le 2\frac{|f({y}_j)|^2}{K_n({x}_j,{x}_j)}+2\frac{|f({x}_j)-f({y}_j)|^2} {K_n({x}_j,{x}_j)} \notag
		\\ &\le \frac{2}{\alpha K_n({x}_j,{x}_j)} \fint_{I_j} |f|^2+2\frac{|f({x}_j)-f({y}_j)|^2}{K_n({x}_j,{x}_j)} \notag
		\\ &=\frac{1}{s\alpha } \int_{I_j} |f|^2+2\frac{|f({x}_j)-f({y}_j)|^2}{K_n({x}_j,{x}_j)}.
	\end{align}
	Note that
	\begin{align}
		\label{eq_der}
		\frac{|f({x}_j)-f({y}_j)|^2}{K_n({x}_j,{x}_j)}\le \frac{1}{K_n({x}_j,{x}_j)} \Big(\int_{A_j} |f'|\Big)^2
		\le \frac{2s\alpha}{K_n({x}_j,{x}_j)^2}\int_{I_j} |f'|^2.
	\end{align}
	
	Now recall the function $\psi_n(x)$ defined in \eqref{ex_psi}.
	By Lemma \ref{lspinoff} there is a constant $C_1$ such that $C_1 \psi_n({x_j})\le K_n({x}_j,{x}_j)$ and in particular assuming $s\le C_1$,
	\[\psi_n(x_j)\cdot |x-x_j|\le \psi_n(x_j)\frac{s}{K_n({x}_j,{x}_j)}
	\le \frac{C_1\psi_n(x_j)}{K_n({x}_j,{x}_j)} \le 1, \qquad x\in I_j.\]
	So, by Remark \ref{lemma_a},
	\[\frac{1}{K_n({x}_j,{x}_j)^2}\le \frac{1}{C_1^2\psi_n(x_j)^2}\le \frac{3}{2 C_1^2\psi_n(x)^2}, \qquad x\in I_j.\]
	Combining this with \eqref{eq_der},
	\begin{align}
		\label{bound2}
		\frac{|f({x}_j)-f({y}_j)|^2}{K_n({x}_j,{x}_j)}\le s\alpha C \int_{I_j} |f'|^2\psi_n^{-2}.
	\end{align}
	From \eqref{bound1} and \eqref{bound2} we obtain
	\[\sum_{j=1}^n \frac{|f({x}_j)|^2}{K_n({x}_j,{x}_j)} \le \frac{1}{s\alpha}\sum_{j\in J}\int_{I_j}|f|^2 + s \alpha C \|f' \psi_n^{-1}\|_2^2.\]
	The conclusion follows from Theorem \ref{theop} with $p=2$.
\end{proof}

Whereas we will only need \eqref{bessel2} to prove the sampling and interpolation theorem, the proof of the discrepancy estimates below relies on the most subtle estimate \eqref{bessel}. The second ingredient towards sampling and interpolation is the following reproducing kernel bound whose proof is postponed to Section~\ref{seclem}.

\begin{lemma}
	\label{lemtech}
	The following statements hold.
	\begin{enumerate}
		\item Given $M\ge1$ there is a constant $C>0$ such that for every $|y|\le 2+ M n^{-2/3}$:
		\begin{align}
			\label{eqint}
			\int_{-2}^2\frac{K_n(x,y)^2}{K_n(x,x)^3}dx\le C\frac{1}{ K_n(y,y)^2}.
		\end{align}
		\item Given $M\ge1$ there is a constant $C>0$ such that for every $|y|\le 2+ M n^{-2/3}$:
		\begin{align}
			\label{eqint2}
			\int_{-2}^2K_n(x,y)^6 K_n(x,x)dx\le C K_n(y,y)^6.
		\end{align}
	\end{enumerate}
\end{lemma}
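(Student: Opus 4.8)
Both parts of the lemma have the same structure, so the plan is to treat them in parallel. The inputs I would use are: the diagonal comparison $K_n(x,x)\simeq\psi_n(x)$ on a neighbourhood of the droplet (Lemma~\ref{lspinoff}), which applies both to the integration variable $x\in[-2,2]$ and to the parameter $y$ since $|y|\le 2+Mn^{-2/3}$; the slow variation of $\psi_n$ from Lemma~\ref{lemma_a}; and --- crucially --- the off-diagonal decay of $K_n(x,y)$ furnished by Lemma~\ref{leker}, which is established first in this section. Using only $K_n(x,x)\simeq\psi_n(x)$, the two inequalities reduce to
\[\int_{-2}^{2}\frac{K_n(x,y)^2}{\psi_n(x)^{3}}\,dx\lesssim\frac{1}{\psi_n(y)^{2}}\qquad\text{and}\qquad\int_{-2}^{2}\psi_n(x)\,K_n(x,y)^6\,dx\lesssim\psi_n(y)^{6},\]
where $\psi_n(x)=n\sqrt{\max\{1-|x|/2,\,n^{-2/3}\}}$ is completely explicit.

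For both, the plan is to split $[-2,2]=B(y,\rho_n)\cup\big([-2,2]\setminus B(y,\rho_n)\big)$ at the microscopic scale $\rho_n:=1/\psi_n(y)$. On the near-diagonal piece $B(y,\rho_n)$, Lemma~\ref{lemma_a} gives $\psi_n(x)\simeq\psi_n(y)$, and the Cauchy--Schwarz bound $|K_n(x,y)|\le K_n(x,x)^{1/2}K_n(y,y)^{1/2}\simeq\psi_n(y)$, combined with $|B(y,\rho_n)|\simeq\psi_n(y)^{-1}$, already produces contributions of order $\psi_n(y)^{-3}\cdot\psi_n(y)^{2}\cdot\psi_n(y)^{-1}=\psi_n(y)^{-2}$ and $\psi_n(y)\cdot\psi_n(y)^{6}\cdot\psi_n(y)^{-1}=\psi_n(y)^{6}$, exactly the desired orders. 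On the far piece $[-2,2]\setminus B(y,\rho_n)$ I would insert the off-diagonal bound of Lemma~\ref{leker} and integrate against $\psi_n(x)^{-3}\,dx$, resp. $\psi_n(x)\,dx$, splitting the range of $x$ according to whether $1-|x|/2\gg n^{-2/3}$ (the bulk portion, where one changes variables to $\varepsilon=1-|x|/2$ and is left with elementary power integrals) or $1-|x|/2\lesssim n^{-2/3}$ (the two $O(n^{-2/3})$-wide edge windows, where $\psi_n\simeq n^{2/3}$); in each case the exponents are arranged so that the result is again $\lesssim\psi_n(y)^{-2}$, resp. $\lesssim\psi_n(y)^{6}$. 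Adding the near- and far-field contributions yields the lemma.

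I expect the main obstacle to be the book-keeping in the far piece, specifically the interplay of the bulk and edge scales. When $y$ lies at (or within $n^{-2/3}$ of) the edge, so that $\psi_n(y)\simeq n^{2/3}$, the decay of $K_n(x,y)$ as $x$ ranges into the bulk is genuinely slower than the naive $|x-y|^{-1}$ one might expect from the sine kernel; as a result the far-field integrals are only barely convergent against the weights $\psi_n(x)^{-3}\,dx$ and $\psi_n(x)\,dx$, which is exactly why the powers $3$ and $6$ --- and not smaller ones --- are needed in the statement. Making the competing powers of $n$ cancel correctly and uniformly over all admissible $y$ is the real content; the remaining tools are Lemma~\ref{lspinoff}, Lemma~\ref{lemma_a}, Cauchy--Schwarz for reproducing kernels, and elementary one-variable integration.
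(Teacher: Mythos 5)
Your proposal is correct and follows essentially the same route as the paper: diagonal comparison via Lemma~\ref{lspinoff}, a near/far split at the microscopic scale $K_n(y,y)^{-1}$, Cauchy--Schwarz together with Lemma~\ref{lemma_a} on the near piece, and the off-diagonal decay of Lemma~\ref{leker} (with a bulk/edge subdivision in $x$) on the far piece. The paper's only additional shortcut is to first dispose of the half of $[-2,2]$ where $|x|\le|y|$ (for part 1) or $|x|\ge|y|$ (for part 2) by combining the monotonicity $K_n(x,x)\gtrsim K_n(y,y)$ (resp.\ $K_n(x,y)^2\le K_n(x,x)K_n(y,y)\lesssim K_n(y,y)^2$) with the reproducing identity $\int_{\R}K_n(x,y)^2\,dx=K_n(y,y)$, so that Lemma~\ref{leker} need only be invoked in the asymmetric regime $|y|\le|x|$ in which it is actually stated, rather than being applied with the roles of $x$ and $y$ swapped as your plan would require.
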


With these tools at hand, we are now ready to show that the Gaussian ensemble in the perfect freezing regime solves the sampling and interpolation problems for weighted polynomials.

\begin{proof}[Proof of Theorem \ref{theosam}]
	To avoid some uninteresting technicalities, we assume throughout that $\rho$ in (Interpolation) and (Sampling) is such that $n\rho$ is an integer.
	
	We invoke Theorem~\ref{sepa}, Theorem~\ref{lemmaprob} and Lemma~\ref{lemmabound}. By restricting to an event of probability $\mathbf{P}_n^{\beta_n}$ at least $1- C(n^{-\tau}+ e^{- \beta_n n /C})$ we ensure that \ref{eqwid} and \ref{eqsepar} are satisfied, and that
	$\|\ell_j\|_\beta \leq n^{\frac{1+\tau}{\beta_n}}$.
	Since $n^{\frac{1}{\beta_n}} \asymp 1$, we conclude from Corollary~\ref{coroniko} that
	\begin{align}
		\label{eqlag}
		\max_{1\le j\le n}\|\ell_j\|_\infty \le C \max_{1\le j\le n}\|\ell_j\|_{\beta_n} \le C n^{\frac {1+\tau}{\beta_n}}\le C e^{\frac {1+\tau}{c_0}}.
	\end{align}
	
	\textit{Step 1. (Interpolation).} Let us write $\rho=1+4\varepsilon$. We can assume $n\varepsilon$ is an integer.
	For $(a_j)_{j=1}^n\subseteq \C$
	define $f \in \ww_{n\rho}$ by
	\[f(x) = \sum_{j=1}^n a_j \ell_j({x}) \frac{K_{n\varepsilon}({x}_j,{x})^4}{K_{n\varepsilon}({x}_j,{x}_j)^4}.\]
	It is clear that $f(x_j)=a_j$ for every $1\le j\le n$.
	Regarding the interpolant's 2-norm, by Theorem~\ref{theores} and \eqref{eqlag},
	\begin{align*}
		\int_\R |f|^2
		&\lesssim \int_{-2}^2 |f|^2 \lesssim \int_{-2}^2 \Big|\sum_{j\in I} a_j \ell_j({x}) \frac{K_{n\varepsilon}({x}_j,{x})^4}{K_{n\varepsilon}({x}_j,{x}_j)^4}\Big|^2\, dx  \notag
		\\ &\lesssim
		\int_{-2}^2 \Big(\sum_{j\in I}  \frac{|a_j|}{\sqrt{K_{n}({x}_j,{x_j})}} \frac{|K_{n\varepsilon}({x}_j,{x})|^3 K_{n}({x}_j,{x}_j)}{K_{n\varepsilon}({x}_j,{x}_j)^4}\cdot \frac{|K_{n\varepsilon}({x}_j,{x})|}{\sqrt{K_{n}({x}_j,{x}_j)}}\Big)^2\, dx \notag
		\\ &\le \int_{-2}^2 \Big(\sum_{j\in I} \frac{|a_j|^2}{K_{n}({x}_j,{x_j})} \frac{K_{n\varepsilon}({x}_j,{x})^6 K_{n}({x}_j,{x}_j)^2}{K_{n\varepsilon}({x}_j,{x}_j)^8}\Big) \Big(\sum_{j=1}^n \frac{K_{n\varepsilon}({x}_j,{x})^2}{K_{n}({x}_j,{x}_j)}\Big)\, dx. \notag
		\intertext{So, applying Lemmas \ref{lemmabessel} and \ref{remsep}, and Lemma \ref{lemtech}  for $n\varepsilon$,}
		\int_\R |f|^2 &\lesssim \int_{-2}^2 \Big(\sum_{j\in I} \frac{|a_j|^2}{K_{n}({x}_j,{x_j})} \frac{K_{n\varepsilon}({x}_j,{x})^6K_{n}({x}_j,{x}_j)^2}{K_{n\varepsilon}({x}_j,{x}_j)^8}\Big) \|K_{n\varepsilon}(\cdot,{x})\|_2^2\, dx \notag
		\\ &\lesssim \frac{1}{\varepsilon^2} \sum_{j\in I}  \frac{|a_j|^2}{K_{n}({x}_j,{x}_j)} \int_{-2}^2 \frac{K_{n\varepsilon}({x}_j,{x})^6 K_{n\varepsilon}(x,{x})}{K_{n\varepsilon}({x}_j,{x}_j)^6}\,  dx \notag
		\\ &\lesssim \frac{1}{\varepsilon^2} \sum_{j\in I}  \frac{|a_j|^2}{K_{n}({x}_j,{x}_j)}.
	\end{align*}
	
	\smallskip
	
	\textit{Step 2. (Sampling).} Let us write $\rho=1-2\varepsilon$. We can assume $n\varepsilon$ is an integer. For every $f\in\ww_{n\rho}$ and for every ${x},{y} \in\R$ we have
	\[f({x})K_{n\varepsilon}({x},{y})^2=\sum_{j=1}^n f({x}_j) K_{n\varepsilon}({x}_j,{y})^2 \ell_j({x}).\]
	From Theorem~\ref{theores} and \eqref{eqlag},
	\begin{align*}
		\int_\R |f|^2 &\lesssim \int_{-2}^{2} |f|^2 \notag
		= \int_{-2}^{2} \frac{\Big|\sum_{j=1}^n f(x_j)  K_{n\varepsilon}({x}_j,{x})^2\ell_j({x})\Big|^2}{K_{n\varepsilon}(x,x)^4}\, dx \notag
		\\ &\lesssim
		\int_{-2}^{2} \Big(\sum_{j=1}^n  \frac{|f(x_j)|}{\sqrt{K_{n}({x}_j,{x_j})}} \frac{|K_{n\varepsilon}({x}_j,{x})| K_{n}({x}_j,{x}_j)}{K_{n\varepsilon}(x,x)^2}\cdot \frac{|K_{n\varepsilon}({x}_j,{x})|}{\sqrt{K_{n}({x}_j,{x}_j)}}\Big)^2\, dx \notag
		\\ &\lesssim \int_{-2}^{2}\Big(\sum_{j=1}^n  \frac{|f(x_j)|^2}{K_{n}({x}_j,{x_j})} \frac{K_{n\varepsilon}({x}_j,{x})^2 K_{n}({x}_j,{x}_j)^2}{K_{n\varepsilon}(x,x)^4}\Big) \Big( \sum_{j=1}^n  \frac{K_{n\varepsilon}({x}_j,{x})^2}{K_{n}({x}_j,{x}_j)} \Big)\, dx \notag
	\end{align*}
	Now applying Lemmas \ref{lemmabessel} and \ref{remsep}, and Lemma \ref{lemtech} for $n\varepsilon$ we conclude
	\begin{align*}
		\int_\R |f|^2 & \lesssim \int_{-2}^{2}\sum_{j=1}^n  \frac{|f(x_j)|^2}{K_{n}({x}_j,{x_j})} \frac{K_{n\varepsilon}({x}_j,{x})^2 K_{n}({x}_j,{x}_j)^2}{K_{n\varepsilon}(x,x)^3}\,  dx \notag
		\\ &\lesssim \frac{1}{\varepsilon^2} \sum_{j=1}^n  \frac{|f(x_j)|^2}{K_{n}({x}_j,{x_j})} \int_{-2}^{2} \frac{K_{n\varepsilon}({x}_j,{x})^2 K_{n\varepsilon}({x}_j,{x}_j)^2}{K_{n\varepsilon}(x,x)^3}\,  dx \notag
		\\ &\lesssim \frac{1}{\varepsilon^2} \sum_{j=1}^n  \frac{|f(x_j)|^2}{K_{n}({x}_j,{x_j})}. \qedhere
	\end{align*}
\end{proof}


\section{Discrepancy}\label{secdis}

In this section we prove Theorem~\ref{propdisc}.

\subsection{Toeplitz operators and rescaling}
For $n\in \N$ and an interval $\Omega\subseteq \R$, we let $T_{n,\Omega}: L^2(\R) \to L^2(\R)$ be the \emph{Toeplitz operator},
\[T_{n,\Omega}f= P_{\ww_n}(1_{\Omega} P_{\ww_n} f),\]
where $P_{\ww_n}$ is the orthogonal projection of $L^2(\R)$ onto $\ww_n$. Then $T_{n,\Omega}$ is a positive contraction
and for $f\in \ww_n$ we have
\[T_{n,\Omega} f(x)= P_{\ww_n}(1_{\Omega} f)(x)=\int_{\Omega}  K_n(x,y) f(y) dy.\]
Notice that $1_{\Omega} P_{\ww_n} 1_{\Omega}$ has the same spectrum as $T_{n,\Omega}$. We will consider $n$-dependent intervals $\Omega_n$ and
denote $T_{\rho n,\Omega_n}$ by $T_{\rho n}$ to alleviate notation.

For an interval $\Omega\subseteq \R$, consider the corresponding Toeplitz operators associated with the sine and the Airy kernels $\sker,\aker: L^2(\R) \to L^2(\R)$, given by the following integral kernels:
\begin{align}\label{eq_fa2}
	S(x,y)&=1_\Omega(x) \frac{\sin(\pi(x-y))}{\pi(x-y)} 1_\Omega(y),\qquad
	\text{and}\qquad
	\\ A(x,y)&=1_\Omega(x) \frac{\ai(x)\ai'(y)-\ai(y)\ai'(x)}{x-y} 1_\Omega(y), \notag
\end{align}
where $\ai$ is the Airy function (see \eqref{eqair}).

\begin{lemma}\label{coro_eigen2}
	Let $L>0$, and $\rho_* \in (1/2,3/2)$. For each $n \in \mathbb{N}$ let $\rho=\rho_n \in (1/2,3/2)$ with $\rho  n \in \N$ and $\rho \to \rho_*$. The following statements hold.
	\begin{enumerate}[label=(\roman*),ref=(\roman*)]
		\item (Bulk)  Fix $\delta>0$. For every $\varepsilon>0$, there exists an $n_0(\delta,L,\varepsilon)\in \N$ such that for $n\ge n_0$, $\eta>0$, $|x_0|\le 2-\delta$,  ${\Omega_n}=B(x_0,L/n)$ and $\Omega=B\big(0,\rho_* L\sigma(x_0)\big)$ 
		we have 	
		\begin{align}\label{eq_ggg2}
			\#\{k\in \N: \ \lambda_k(\sker)>\eta+\varepsilon\} &\le \#\{k\in \N: \ \lambda_k(T_{\rho n})>\eta\}
			\le \#\{k\in \N: \ \lambda_k(\sker)>\eta-\varepsilon\}.
		\end{align}
		\item (Boundary) For  $\eta>0$, $x_0=\pm 2$,  ${\Omega_n}=B\big(x_0,(L/n)^{2/3}\big)$ and $\Omega=B(0,(\rho_* L)^{2/3}),$
		we have
		\begin{multline*}
			\#\{k\in \N: \ \lambda_k(\aker)>\eta\}\le \liminf_{n\to\infty} 	\#\{k\in \N: \ \lambda_k(T_{\rho n})>\eta\} 
			\\ \le \limsup_{n\to\infty} 	\#\{k\in \N: \ \lambda_k(T_{\rho n})>\eta\}\le  \#\{k\in \N: \ \lambda_k(\aker)\ge\eta\}.
		\end{multline*}
	\end{enumerate}
\end{lemma}

\begin{proof}
	First notice that $T_{\rho n}$ has the same spectrum as $1_{\Omega_n} P_{\ww_{\rho n}} 1_{\Omega_n}$. We work with a rescaled version of this operator given by the kernel \begin{align}\label{eq_fbb2}
		1_{r({\Omega_n}-x_0)}(x)\frac{K_{\rho n}(x_0+x/r, x_0+y/r)}{r} 1_{r({\Omega_n}-x_0)}(y),
	\end{align}
	where $r=K_{\rho n}(x_0,x_0)$. By \cite[Theorem~1.2]{LeLu5} (bulk convergence of rescaled reproducing kernels) we get
	\begin{align*}
		\frac{K_{\rho n}(x_0+x/r, x_0+y/r)}{r} \longrightarrow \frac{\sin(\pi(x-y))}{\pi(x-y)},
		\qquad \mbox{as }n \to \infty,
	\end{align*}
	uniformly for $x,y \in r(\Omega_n-x_0)$, provided that $|x_0|\le 2-\delta$.
	Together with Lemma~\ref{remsep}, this readily implies that \eqref{eq_fbb2} converges in $L^2(\mathbb{R}\times\mathbb{R},dxdy)$ to the integral kernel in \eqref{eq_fa2}. This implies operator convergence, and in turn, convergence of the eigenvalues by a straightforward application of the Courant-Fisher formula. The boundary case follows analogously using \cite{Fo2} or \cite[Theorem 1.2]{LeLu6} and the rescaling $r=(\rho n)^{2/3}$.
\end{proof}

We first inspect the spectrum of Toeplitz operators related to the sine kernel. We will use the following refined asymptotics from~\cite{KaRoDa}.
\begin{lemma}\label{lemma_e1}
	There is a universal constant $C>0$ such that
	the following holds for $L>2$, $\Omega=B(0,L)$ and $\eta\in(0,1)$,
	\begin{align}\label{eqsin}
		\big|\#\{k\in \N: \ \lambda_k(\sker)>\eta\}-2L\big|  \le C \log(\max\{\eta^{-1},(1-\eta)^{-1}\}) \log L.
	\end{align}
\end{lemma}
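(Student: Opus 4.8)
The plan is to derive \eqref{eqsin} from the known spectral theory of the sine-kernel operator $\sker_\Omega$ on an interval, which via the substitution $x\mapsto x/(2L)$ is unitarily equivalent to the classical prolate spheroidal operator on $B(0,1/2)$ with band parameter $c=\pi L$. For such operators it is well understood that the eigenvalues exhibit a sharp \emph{plunge region}: roughly $2L$ of them are close to $1$, a logarithmic-in-$L$ number transition through $(0,1)$, and the rest are exponentially small. The quantitative statement I would invoke is precisely the refined eigenvalue counting estimate of Kulikov--Ramos--Dai (cited as \cite{KaRoDa} in the excerpt), which controls $\#\{k:\lambda_k(\sker)>\eta\}$ up to an error of order $\log(\text{something})\cdot\log L$, uniformly in $\eta\in(0,1)$ bounded away from $0$ and $1$ by a quantity that only enters logarithmically. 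So the first step is simply to translate the normalization: the Toeplitz operator here acts on $L^2(\R)$ with integral kernel $1_\Omega(x)\frac{\sin(\pi(x-y))}{\pi(x-y)}1_\Omega(y)$ and $\Omega=B(0,L)$, which corresponds to time-band product $2L$ in the standard Landau--Widom normalization; hence the ``bulk count'' is $2L$, matching the main term in \eqref{eqsin}.

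Second, I would state the input from \cite{KaRoDa} in the exact form needed: there is a universal $C>0$ such that for the prolate operator with Landau--Widom parameter $N$ (here $N=2L$), and for any threshold $\eta\in(0,1)$,
\[
\bigl|\#\{k:\lambda_k>\eta\}-N\bigr|\le C\,\log\!\bigl(\max\{\eta^{-1},(1-\eta)^{-1}\}\bigr)\,\log N .
\]
One should double-check that the error term in the cited reference is genuinely of the form $\log(\tfrac{1}{\eta(1-\eta)})\log N$ and not, say, $\log\tfrac{1}{\eta}\cdot\log\tfrac{1}{1-\eta}\cdot\log N$ or with an additional additive constant; either way, since $L>2$ we have $\log L>\log 2>0$, so any bounded additive discrepancy can be absorbed into the constant $C$ at the cost of enlarging it, using that $\log(\max\{\eta^{-1},(1-\eta)^{-1}\})\ge \log 2$ as well (the max is always $\ge 2$ since at least one of $\eta,1-\eta$ is $\le 1/2$). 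Thus the two logarithmic factors in \eqref{eqsin} are both bounded below by a positive absolute constant, which lets me merge lower-order terms cleanly.

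The third step is bookkeeping to match conventions: confirm that ``$2L$'' is the right leading constant (as opposed to $L$, $4L$, etc.) by recalling that for the sine kernel $\frac{\sin\pi(x-y)}{\pi(x-y)}$ restricted to an interval of length $2L$ — which is what $B(0,L)$ is — the accumulated eigenvalue mass near $1$ is the measure of the interval, i.e.\ $2L$; this is consistent with $\tr\sker_{B(0,L)}=\int_{-L}^L \frac{\sin 0}{0}\,dx=2L$, so the count of near-$1$ eigenvalues must be $\approx 2L$. Then \eqref{eqsin} follows by plugging $N=2L$ into the cited estimate and renaming the constant.

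The main obstacle here is not conceptual but one of precisely citing the right inequality: \cite{KaRoDa} presumably proves several closely related forms of the plunge-region estimate, and I need the one that is uniform in $\eta$ with the stated explicit $\log(\max\{\eta^{-1},(1-\eta)^{-1}\})\log L$ dependence, rather than an asymptotic-in-$L$ or fixed-$\eta$ version. If the reference only gives the estimate for the interval $[-1/2,1/2]$ with band parameter $c$, I would need the (completely standard) scaling identity relating $\lambda_k$ of $\sker_{B(0,L)}$ to those of the prolate operator with $c=\pi L$, and verify that $c/\pi = L$ corresponds to $N = 2L$ eigenvalues near $1$. Beyond that the argument is a one-line substitution.
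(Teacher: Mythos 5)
Your proposal is correct and takes essentially the same route as the paper: the paper also establishes Lemma~\ref{lemma_e1} purely by citation to \cite[Theorem~3 and Section~3.2]{KaRoDa} (with the same rescaling/normalization bookkeeping implicit), noting only in addition that the cited reference gives an explicit constant. Your observations about absorbing additive slack into $C$ using $\log L>\log 2$ and $\log(\max\{\eta^{-1},(1-\eta)^{-1}\})\ge\log 2$ are the right sanity checks, even if the paper does not spell them out.
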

Lemma \ref{lemma_e1} is proved in ~\cite[Theorem~3 and Section~3.2]{KaRoDa}, where in addition an explicit value for $C$ is given (see also \cite{israel15,Os,BoJaKa,KaZhWaRoDa}).
In contrast to more qualitative classic asymptotics going back to Landau and Widom \cite{MR593228, MR169015}, the merit of Lemma \ref{lemma_e1} is that the bound \eqref{eqsin} holds simultaneously for all $L$ and $\eta$. This fact will be crucial for the proof of Theorem~\ref{propdisc}.

The spectrum of the Airy integral operator seems to be less well understood than that of the sine kernel (see~\cite{KaAbMe,ShSe} for an analysis of numerical methods). We settle for the following estimate and point out that any improvement on the dependence of $\eta$ would directly translate to an improvement of the discrepancy on the boundary in Theorem~\ref{propdisc}.

\begin{lemma}\label{lemtrace2} For $\Omega=B(0, L^{\frac 23})$ with $L>2$ and $\eta\in (0,1)$ we have
	\[\big|\#\{k\in \N: \ \lambda_k(\aker)>\eta\}-\tfrac{2}{3\pi} L\big|=\max\{\eta^{-1},(1-\eta)^{-1}\} \oo(\log L).\]
\end{lemma}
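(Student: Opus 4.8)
The plan is to reduce the counting of large eigenvalues of the Airy operator $\aker_\Omega$ with $\Omega=B(0,L^{2/3})$ to the already-understood sine kernel case, Lemma \ref{lemma_e1}, by exploiting the Plancherel--Rotach heuristics: near the soft edge the local density of states of the confined operator behaves like $\tfrac{1}{\pi}\sqrt{(-x)_+}$ (in the standard Airy normalization), so that the ``bandwidth'' of $\aker$ on $(-\infty,T]$ is $\tfrac{1}{\pi}\int_{-T}^0\sqrt{-x}\,dx=\tfrac{2}{3\pi}T^{3/2}$. With $T=L^{2/3}$ this produces the main term $\tfrac{2}{3\pi}L$. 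First I would record the trace identity $\tr(\aker_\Omega)=\int_{-\infty}^{L^{2/3}} A_\Omega(x,x)\,dx=\int_{-\infty}^{L^{2/3}}\big(\ai'(x)^2-x\,\ai(x)^2\big)\,dx$, and use the classical asymptotics of the Airy function to show this equals $\tfrac{2}{3\pi}L+O(\log L)$ — indeed the relevant antiderivative is $x\,\ai'(x)^2-\ai(x)\ai'(x)-x^2\ai(x)^2$ evaluated between $-\infty$ and $L^{2/3}$, and the oscillatory tail on the negative axis integrates to $O(\log L)$ after using $\ai(x)^2\sim \tfrac{1}{\pi\sqrt{-x}}(\text{bounded oscillation})$ as $x\to-\infty$. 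This handles the ``sum of eigenvalues'' but not yet the count above a level $\eta$.

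Next I would control the discrepancy between $\#\{\lambda_k(\aker)>\eta\}$ and the trace. The standard device is the identity $\#\{\lambda_k>\eta\}-\tfrac1\eta\tr(\aker)=\sum_k\big(\mathbf 1_{\lambda_k>\eta}-\tfrac{\lambda_k}{\eta}\big)$, whose right-hand side is $-\tfrac1\eta\sum_k\lambda_k(1-\lambda_k)\cdot(\text{sign-controlled weight})$ up to the contribution of the near-zero eigenvalues; the error is governed by $\tr\big(\aker_\Omega-\aker_\Omega^2\big)=\iint_{\Omega\times\Omega^c}A(x,y)^2\,dx\,dy$, the ``plunge region'' mass. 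So the real task is a Landau--Widom-type bound
\[
\tr\big(\aker_\Omega-\aker_\Omega^2\big)=\oo(\log L),
\]
with the $\eta$-dependence entering only through the combinatorial factor $\max\{\eta^{-1},(1-\eta)^{-1}\}$. To obtain this I would use the commutator method: the Airy operator $\aker_{(-\infty,T]}$ commutes with the confluent differential operator $\mathcal D=\tfrac{d}{dx}\big((x-T)\tfrac{d}{dx}\big)+ \ldots$ (the one-dimensional Airy analogue of the prolate spheroidal operator, whose eigenfunctions are the finite-Airy functions), which lets one establish that the spectrum of $\aker_{(-\infty,T]}$ clusters near $0$ and $1$ with a logarithmically-thin transition layer; integrating the spectral density of the commuting operator over the relevant window yields the $\log L$ bound. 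Alternatively — and this is the route I would actually take, since the commuting operator for the truncated-Airy case is more delicate — I would compare directly with the sine kernel: on $[-T,0]$ rescale locally so that $A_\Omega$ looks like the sine kernel at the varying density $\tfrac{1}{\pi}\sqrt{-x}$, cover $[-T,0]$ by $O(\log T)$ dyadic blocks, apply Lemma \ref{lemma_e1} on each block, and absorb the edge block near $x=0$ (where the Airy, not sine, behavior is genuine) into the error since it contributes $O(T^{3/2}\cdot \text{(unit length)})=O(1)$ to the bandwidth; the Airy tail on $(-\infty,-T]$ contributes exponentially little.

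The main obstacle is the plunge-region estimate $\tr(\aker_\Omega-\aker_\Omega^2)=\oo(\log L)$ with the correct (merely logarithmic) growth in $L$: unlike the sine case, there is no clean translation structure, and the commuting differential operator for the truncated Airy kernel, while known, does not come with off-the-shelf eigenvalue-spacing estimates of the precision we need. I expect to spend most of the effort there, and this is exactly why the statement is content with the crude factor $\max\{\eta^{-1},(1-\eta)^{-1}\}$ in place of the $\log(\max\{\eta^{-1},(1-\eta)^{-1}\})$ that Lemma \ref{lemma_e1} achieves for the sine kernel — any sharpening of this step would, as remarked before the statement, immediately sharpen the boundary discrepancy in Theorem \ref{propdisc}.
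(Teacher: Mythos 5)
Your high-level strategy---reduce the eigenvalue count to $\tr(\aker)$ and a plunge-region quantity $\tr(\aker-\aker^2)$---is the right first step and is the same one the paper takes, but two things need fixing. First, your reduction identity compares $\#\{\lambda_k>\eta\}$ with $\eta^{-1}\tr(\aker)$, which would produce the wrong main term $\tfrac{2}{3\pi\eta}L$ rather than $\tfrac{2}{3\pi}L$; the clean route is the pointwise inequality $|1_{(\eta,1)}(x)-x|\le \max\{\eta^{-1},(1-\eta)^{-1}\}(x-x^2)$ on $[0,1]$, which gives $\big|\#\{\lambda_k>\eta\}-\tr(\aker)\big|\le \max\{\eta^{-1},(1-\eta)^{-1}\}\,\tr(\aker-\aker^2)$ with the correct centering. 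Second, your trace computation evaluates an antiderivative between $-\infty$ and $L^{2/3}$; the interval is $\Omega=B(0,L^{2/3})=(-L^{2/3},L^{2/3})$, and the lower endpoint must be $-L^{2/3}$ (the extension to $-\infty$ diverges, since the Airy density grows like $\pi^{-1}\sqrt{-x}$); the antiderivative coefficients you quote are also off, though this is immaterial to the method.

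The genuine gap is the bound $\tr(\aker-\aker^2)=\oo(\log L)$, which you correctly identify as the crux but never establish: the commuting-operator route is left as a heuristic and you acknowledge the needed spectral spacing estimates for the truncated Airy case are not off-the-shelf, while the dyadic block comparison with the sine kernel is not worked out (the inter-block contributions and the edge block are precisely where a Landau--Widom-type argument gets delicate). The paper sidesteps all of this with a one-line observation: since $\aker$ is the correlation kernel of a determinantal point process (the Airy point field), $\tr(\aker_\Omega)=\E\#[\Omega]$ and $\tr(\aker_\Omega-\aker_\Omega^2)=\V\#[\Omega]$ are literally the mean and variance of the particle count, and Soshnikov's work already provides $\E\#[(-L^{2/3},\infty)]=\tfrac{2}{3\pi}L+\oo(1)$ and $\V\#[(-L^{2/3},\infty)]=\oo(\log L)$, plus finite moments for $\#[(a,\infty)]$. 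Passing from half-lines to the symmetric interval $B(0,L^{2/3})$ costs only $\oo(1)$, and the lemma becomes a citation rather than an estimate. If you want a self-contained proof, your idea of bounding $\iint_{\Omega\times\Omega^c}A(x,y)^2\,dx\,dy$ directly from Airy asymptotics is the natural route, but it essentially amounts to reproving Soshnikov's variance bound.
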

\begin{proof}
	We first note that that for $\eta\in(0,1)$,
	\begin{align*}
		\big|\#\{k: \lambda_k(\aker)>\eta\}-\tr(\aker)\big|
		&=\big|\tr\big[(1_{(\eta,1)}-\id)(\aker)\big]\big|
		\le\tr\big[|1_{(\eta,1)}-\id|(\aker)\big] \notag
		\\ &\le \max\{\eta^{-1},(1-\eta)^{-1}\} \tr(\aker-\aker^2),
	\end{align*}
	since $|1_{(\eta,1)}(x)-x|\le \max\{\eta^{-1},(1-\eta)^{-1}\} (x-x^2)$. Therefore, it suffices to check that
	\[ \tr(\aker)=\frac{2}{3\pi} L+\oo(1), \quad \text{and} \quad \tr(\aker-\aker^2)=\oo(\log L).\]
	This statements are well-known because they describe the statistical fluctuations of
	the \emph{Airy random point field}, that is, the determinantal point process whose correlation kernel is the Airy kernel (see \cite{So}). Indeed, in terms of the Airy point field,
	\[ \tr(\aker)=\E \#[B(0, L^{\frac 23})], \quad \text{and} \quad \tr(\aker-\aker^2)=\V \#[B(0, L^{\frac 23})].\]
	In \cite{So} it is shown that
	\begin{align*}
		&\bullet \ \E \big[\#[(a,\infty)]^k\big] <\infty,  \text{ for every } a\in\R \text{ and } k\in\N;
		\\ &\bullet \  \E \#[(-L^{\frac 23},\infty)] =\tfrac {2 }{3\pi}L+\oo(1);
		\\ & \bullet  \ \V \#[(-L^{\frac 23},\infty)] =\oo(\log L).
	\end{align*}
	So we get
	\begin{align*}
		\big|\E \#[B(0,L^{\frac 23})]-\tfrac {2 }{3\pi}L\big| \le \E \#[ L^{\frac 23},\infty)] + \oo(1) \le \E \#[(0,\infty)] + \oo(1) = \oo(1),
	\end{align*}
	and,
	\begin{align*}
	\V \#[B(0,L^{\frac 23})] &\le 2\V \#[(-L^{\frac 23},\infty)]+2 \V \#[(L^{\frac 23},\infty)]
	\\ &
	\le \oo(\log L) + 2 \E\big[\#[(0,\infty)]^2\big]= \oo(\log L). \qedhere
\end{align*}
\end{proof}

\subsection{Proof of Theorem~\ref{propdisc}}
The proof is inspired by \cite[Proposition~1.3]{AmRom}.
Fix a bandwidth margin $\gamma>0$ and choose $\tau>0$ so that the assertions of Theorem~\ref{theosam} hold with probability $\mathbf{P}_n^{\beta_n}$ $\ge 1- C(n^{-\tau}+ e^{- \beta_n n /C})$. Let $\Omega_n,\Omega$ be as in Lemma~\ref{coro_eigen2} (in either the bulk or the boundary case).
Recall the disjoint intervals $I_j$ from Lemma~\ref{lemmabessel} and the corresponding constant $s$, and define
\begin{align*}
	J_n^+&=\{ j\in\{1,\ldots, n\}:\ I_j\cap\Omega_n \not= \varnothing\},
	\\ J_n^-&=\{j\in\{1,\ldots, n\}:\ I_j\subseteq \Omega_n \},
	\\  \ N_n^\pm&=\#J_n^\pm, \text{ and }N_n=\#[\Omega_n].
\end{align*}
Note that
\[N_n^-\le N_n \le N_n^+ \le N_n^- + 2.\]

\textit{Step 1. (Lower bound).} Choose $\rho_*=1-\gamma$ with
\begin{align}\label{eq_m0}
	0<\gamma<1/2,
\end{align}
and select $\rho=\rho_n=\rho_*+o(1)$ such that $\rho n \in \mathbb{N}$. Let us restrict the operator $T_{\rho n}$ to $\ww_{\rho n}$; this does not change its spectrum, except for the zero eigenvalue. Let $(\phi_j)_{j=1}^{\rho n}$ be an orthonormal basis of eigenvectors of $T_{\rho n}$ with eigenvalues
\[1\ge \lambda_1 \ge \ldots \ge \lambda_{\rho n}>0.\]
Here we assume that $\rho n$ is an integer as before.
Write
\[F=\spa\{\phi_1,\ldots,\phi_{N_n^+ +1}\}.\]
There is an element $f\in F$ such that $\|f\|=1$ and $f(x_j)=0$ for every $j\in J_n^+$.
By Theorem~\ref{theosam} and Lemma~\ref{lemmabessel}, for any $\alpha\in (0,1)$ we get
\begin{align*}
	1&=\|f\|^2 \le \frac{A}{ \gamma^{2}} \sum_{j\notin J_n^+} \frac{|f(x_j)|^2}{K_n(x_j,x_j)} \le \frac{A}{s\alpha \gamma^{2}} \sum_{j\notin J_n^+}\int_{I_j}|f|^2 +\frac{A  \tilde{C} s\alpha}{\gamma^{2}} \|f\|_2
	\\ &\le \frac{A}{s\alpha \gamma^{2}} \int_{\Omega^c_n}|f|^2 +\frac{A  \tilde{C} s\alpha}{\gamma^{2}}.
\end{align*}
Assume for the moment that
\begin{align}\label{eq_m1}
	\frac{A\tilde{C}s}{\gamma^{2}}>\frac12,
\end{align}
and choose $\alpha={\gamma^2}/{2A\tilde{C}s}$. Then
\begin{align*}
	1\le  \frac{C}{\gamma^{4}} \int_{\Omega^c_n}|f|^2,
\end{align*}
with $C=C(c_0,\tau)$, so that
\begin{align*}
	\lambda_{N_n^+ +1}\le \langle T_{\rho n}f,f \rangle =\int_{\Omega_n}|f|^2 =1-\int_{\Omega^c_n}|f|^2\le 1- \frac{\gamma^{4}}{C}.
\end{align*}
We may assume that $C>2$ and take $\eta=1-\gamma^{4}/C>1/2$. By Lemma \ref{coro_eigen2},
\begin{align*}
	\liminf_{n\to\infty} N_n^+ \ge \liminf_{n\to\infty} \#\{j: \lambda_j>\eta\}\ge
	\begin{cases}
		\#\{j: \lambda_j(\sker)>\eta\}
		& \text{(Bulk)}
		\\ \#\{j: \lambda_j(\aker)>\eta\}
		& \text{(Boundary)}
	\end{cases}.
\end{align*}
By Lemmas \ref{lemma_e1} and \ref{lemtrace2},
\begin{align}
	\label{eqlimi}
	\liminf_{n\to\infty} N_n \ge
	\begin{cases}
		\frac{\sqrt{4-x_0^2}}{\pi}(1-\gamma)L + \log (\gamma^{-1}) \oo( \log L)
		& \text{(Bulk)}
		\\ \frac{2}{3\pi}(1-\gamma) L + \gamma^{-4}\oo(\log L)
		& \text{(Boundary)}
	\end{cases}
\end{align}
where the implied constants do not depend on $\gamma$ or $x_0$.

\smallskip

\textit{Step 2. (Upper bound).} Set $\rho_*=1+\gamma$
with \eqref{eq_m0} and select $\rho=\rho_n = \rho_* + o(1)$ such that $\rho n \in \mathbb{N}$. Consider again $T_{\rho n}$ as an operator on $\ww_{\rho n}$. Let
\begin{alignat*}{2}
	&K_{x_j}&&=K_{\rho n}(\,\cdot\,,x_j)\in \ww_{\rho n},
	\\ &V&&=\spa\{K_{x_j}\}_{j=1}^n\subseteq \ww_{\rho n},
	\\ &W&&=V \ominus \spa\{K_{x_j}\}_{j\notin J_n^-} \subseteq \ww_{\rho n},
\end{alignat*}
and note that $\dim W=N_n^-$.

Now fix $f\in W \setminus \{0\}$. By Theorem~\ref{theosam} there exists $g\in\ww_{\rho n}$ such that $g(x_j)=f(x_j)$ for every $1\le j\le n$ and
\[\|g\|^2 \le \frac{A}{ \gamma^2} \sum_{j=1}^n \frac{|f(x_j)|^2}{K_n(x_j,x_j)}=\frac{A}{ \gamma^2} \sum_{j\in J_n^-} \frac{|f(x_j)|^2}{K_n(x_j,x_j)},\]
where we used that $f(x_j)=\langle f, K_{x_j}\rangle=0$ for $j\notin  J_n^-$. Letting $P_V:\ww_{\rho n}\to V$ be the orthogonal projection, we note that
\[P_V g (x_j)=\langle P_V g, K_{x_j}\rangle=\langle g, K_{x_j}\rangle=g (x_j)=f (x_j), \quad \text{for every }1\le j\le n.\]
Since evaluation at $(x_j)_{j=1}^n$ is an isomorphism between $V$ and $\C^n$, we have that $P_V g=f$ and
\[\|f\|^2\le \|g\|^2\le \frac{A}{ \gamma^2} \sum_{j\in J_n^-} \frac{|f(x_j)|^2}{K_n(x_j,x_j)}.\]
Combining this with Lemma~\ref{lemmabessel},
\[\|f\|^2\le \frac{A}{s \alpha \gamma^2} \int_{\Omega_n}|f|^2 +\frac{A  \tilde{C} s\alpha}{\gamma^{2}} \|f\|^2.\]
As before, assuming \eqref{eq_m1} and choosing $\alpha=\gamma^2/(2A\tilde{C}s)$ yields
\[\|f\|^2 < \frac{C}{\gamma^4}\int_{\Omega_n}|f|^2, \]
for a constant $C=C(c_0,\tau)$.
Since $\dim W=N_n^-$, by the Courant-Fisher characterization of eigenvalues of self-adjoint operators,
\[\lambda_{N_n^-}\ge \min_{f\in W \setminus \{0\}} \frac{\langle T_{\rho n}f,f\rangle}{\|f\|^2}= \min_{f\in W\setminus \{0\}} \frac{\int_{\Omega_n}|f|^2}{\|f\|^2}> \frac{\gamma^4}{C}.\]
By Lemma~\ref{coro_eigen2},
\begin{align*}
	\limsup_{n\to\infty} N_n^- \le \limsup_{n\to\infty} \#\{j: \lambda_j>\gamma^4/C\}\le
	\begin{cases}
		\#\{j: \lambda_j(\sker)\ge\gamma^4/C\}
		& \text{(Bulk)}
		\\ \#\{j: \lambda_j(\aker)\ge\gamma^4/C\}
		& \text{(Boundary)}
	\end{cases}.
\end{align*}
Assume also $C>1$ so that $\gamma^4/C<1/2$. Again by Lemmas \ref{lemma_e1} and \ref{lemtrace2},
\begin{align}
	\label{eqlims}
	\limsup_{n\to\infty} N_n \le
	\begin{cases}
		\frac{\sqrt{4-x_0^2}}{\pi}(1+\gamma)L + \log (\gamma^{-1})  \oo(\log L)
		& \text{(Bulk)}
		\\ \frac{2}{3\pi}(1+\gamma) L + \gamma^{-4}\oo(\log L)
		& \text{(Boundary)}
	\end{cases}.
\end{align}
where the implied constants do not depend on $\gamma$ or $x_0$.

\smallskip

\textit{Step 3. (Conclusions).} From \eqref{eqlimi} and \eqref{eqlims},
\begin{align*}
	\bullet&& \limsup_{n\to\infty} \Big| N_n - \tfrac{\sqrt{4-x_0^2}}{\pi}L \Big|&\le
	\gamma\oo(L) + \log (\gamma^{-1}) \oo(\log L)
	& &\text{(Bulk)};
	\\\bullet&& \limsup_{n\to\infty} \Big| N_n - \tfrac{2}{3\pi} L \Big| &\le
	\gamma \oo(L) + \gamma^{-4}\oo(\log L)
	& &\text{(Boundary)}.
\end{align*}
Moreover, by Lemma~\ref{coro_eigen2}, in the bulk case, the bound is uniform in the following sense: for $\delta>0$ there exists $n_0(\delta,L,\gamma,\tau,c_0)\in \N$ such that for every $n\ge n_0$ and $|x_0|\le 2-\delta$,
\[\Big| N_n - \tfrac{\sqrt{4-x_0^2}}{\pi}L \Big|\le
\gamma\oo(L) + \log (\gamma^{-1}) \oo(\log L).\]

The desired conclusions \eqref{eq_ppp1} and \eqref{eq_ppp2} follow by taking $\gamma=L^{-1}$ in the bulk case and $\gamma=(\log L /L)^{1/5}$ in the boundary case. These choices are compatible with \eqref{eq_m0} and \eqref{eq_m1} as soon as $L \geq L_0$ with $L_0=L_0(c,\tau)$ a certain constant. For the range $2<L<L_0$ we use a crude bound based on the separation condition \ref{eqsepar} of Theorem~\ref{theosam}. \qed

\section{Proof of Lemma~\ref{lemtech}} \label{seclem}
In this section we prove Lemma~\ref{lemtech}. First notice that by the reproducing property and Cauchy–Schwarz inequality:
\begin{align}\label{eqcs}
	K_n(x,y)^2\le
	K_n(x,x) K_n(y,y), \quad x,y \in \mathbb{R}.
\end{align}
This can also be seen by noting that the two-point intensity of the GUE is non-negative and equals the determinant $K_n(x,x) K_n(y,y)-K_n(x,y)^2$ (see \cite[Proposition 5.1.2]{Fo}).

We also need a more precise control of $K_n(x,y)$.
An orthonormal basis of $\mathcal{W}_n$ is given by the rescaled Hermite functions $\{p_k\}_{k=0}^{n-1}$, where
\begin{align}\label{eqherm}
	p_k(x) =  n^{1/4} h_k\!\left(\sqrt{n}x\right)e^{-nx^2/4},  \qquad x \in \mathbb{R},\ k \geq 0,
\end{align}
and
\begin{align*}
	h_k(x) = \big(\sqrt{2\pi} k!\big)^{-\tfrac{1}{2}} (-1)^k e^{x^2/2} \frac{d^k}{d^kx} e^{-x^2/2}, \qquad x \in \mathbb{R},\ k \geq 0.
\end{align*}

From the \emph{Christoffel-Darboux formula} (see \cite[Proposition~5.1.3]{Fo}) we have
\begin{align}
	\label{eqCD}
	K_n(x,y)=\sum_{k=0}^{n-1}p_k(x)p_k(y)=\frac{p_n(x)p_{n-1}(y)-p_n(y)p_{n-1}(x)}{x-y}.
\end{align}
So, estimates on Hermite polynomials such as \cite[Theorem~12.1]{LeLu2}, \cite[Theorem~8.22.9]{Sz} and \cite[Theorem~2.2]{DKMLV} can be used to obtain off-diagonal estimates for the reproducing kernel $K_n(x,y)$. For the first two we do this by hand, and for the latter we elaborate on what was already done in \cite{Gu}. In general, off-diagonal estimates have been much less studied than the microscopic limits (the sine kernel and the Airy kernel), and while the following bounds
are most probably known among experts, we are not aware of more explicit references.

\begin{lemma}\label{lpn}
	The following estimates hold.
	\begin{enumerate}
		\item \label{it11} For every $n\in \N$ and $|x|\le 2-n^{-2/3}$ we have
		\begin{align}
			\label{eqkernel11}
			p_n(x)&= \oo\Big( \frac{1}{(2-|x|)^{1/4}} \Big),
			\intertext{and}
			\label{eqkernel12}
			p_n(x)-p_{n-1}(x)&= \oo\big( (2-|x|)^{1/4} \big).
		\end{align}
		\item \label{it21} Fix $M\ge 1$. For every $n\in \N$ and every $x\in \R$ such that $|2- |x| | \le M n^{-2/3}$ we have
		\begin{align}
			\label{eqkernel21}
			p_n(x)&= \oo\big( n^{1/6} \big),
			\intertext{and}
			\label{eqkernel22}
			p_n(x)-p_{n-1}(x)&= \oo\big( n^{-1/6} \big),
		\end{align}
		where the implicit constants depend only on $M$.
	\end{enumerate}
\end{lemma}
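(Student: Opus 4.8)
The plan is to read off all four estimates from Plancherel--Rotach asymptotics for the Hermite functions. Writing $\hat h_m(t)=h_m(t)e^{-t^2/4}$ for the $m$-th $L^2$-normalized Hermite function, we have $p_m(x)=n^{1/4}\hat h_m(\sqrt n\,x)$, and its classical turning points sit at $\sqrt n\,x=\pm2\sqrt{m+\tfrac12}$, i.e.\ at $x=\pm2+\oo(1/n)$ for $m\in\{n-1,n\}$. I would use two descriptions of $p_m$. In the oscillatory regime the trigonometric Plancherel--Rotach formula (as in \cite[Theorem~8.22.9]{Sz} and the Hermite bounds of \cite[Theorem~12.1]{LeLu2}) reads, after the rescaling and schematically, $p_m(x)=A_m(x)\bigl(\sin\Theta_m(x)+\oo(1/n)\bigr)$ with amplitude $A_m(x)\simeq(4-x^2)^{-1/4}$ and local frequency $\Theta_m'(x)\simeq n\sqrt{4-x^2}$, valid for $4-x^2\ge c$. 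Near the soft edge I would use instead the uniform Airy-type expansion from \cite[Theorem~2.2]{DKMLV} (elaborating on \cite{Gu}), schematically $p_m(x)=c\,n^{1/6}\bigl(\ai(n^{2/3}\zeta_m(x))(1+\oo(n^{-1}))+n^{-2/3}\ai'(n^{2/3}\zeta_m(x))\,\oo(1)\bigr)$, where $x\mapsto\zeta_m(x)$ is a smooth increasing change of variables with $\zeta_m(\pm2)=\oo(1/n)$, $\zeta_m'\asymp1$, valid for $\bigl|\,2-|x|\,\bigr|\lesssim n^{-2/3}$ and matching the first formula in the overlap. Since the amplitude estimates are even in $x$, it suffices to take $x\ge0$; for the difference estimates I would likewise localize near $x=2$, the range $x\le0$ reducing to this via the parity $p_m(-x)=(-1)^m p_m(x)$.

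For the amplitude bounds \eqref{eqkernel11} and \eqref{eqkernel21}: when $4-x^2\ge c$ the inequality $|p_n(x)|\lesssim(4-x^2)^{-1/4}\simeq(2-|x|)^{-1/4}$ is immediate from the oscillatory formula since $|\sin\Theta_n|\le1$. For $0\le2-x\le c$ I would pass to the edge formula: if $2-x\ge n^{-2/3}$ then $n^{2/3}\zeta_n(x)\lesssim-1$, so $|\ai(n^{2/3}\zeta_n(x))|\lesssim\bigl(n^{2/3}(2-x)\bigr)^{-1/4}$ and hence $|p_n(x)|\lesssim n^{1/6}\cdot n^{-1/6}(2-x)^{-1/4}=(2-x)^{-1/4}$, which is \eqref{eqkernel11}; if $|2-x|\le Mn^{-2/3}$ then $n^{2/3}\zeta_n(x)$ stays in a compact set and $|p_n(x)|\lesssim n^{1/6}\bigl(\|\ai\|_\infty+n^{-2/3}\|\ai'\|_\infty\bigr)\lesssim n^{1/6}$, which is \eqref{eqkernel21}.

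The difference estimates \eqref{eqkernel12} and \eqref{eqkernel22} are the heart of the matter: since $p_n$ and $p_{n-1}$ individually only satisfy $\lesssim(2-|x|)^{-1/4}$, one must extract a full factor $(2-|x|)^{1/2}$ of cancellation. A convenient device is the ladder identity
\[
p_n(x)-p_{n-1}(x)=\bigl(1-\tfrac x2\bigr)p_n(x)-\tfrac1n\,p_n'(x),
\]
coming from $h_n'=\sqrt n\,h_{n-1}$ and the Hermite three-term recurrence (equivalently $p_{n-1}=\tfrac x2 p_n+\tfrac1n p_n'$). Near $x=2$ the factor $1-\tfrac x2=\tfrac{2-x}{2}$ reduces the first term to $\lesssim(2-x)^{3/4}$, so it remains to prove $|p_n'(x)|\lesssim n(2-x)^{1/4}$; this I would obtain by differentiating the Plancherel--Rotach/Airy expansion, the dominant term being $A_n\Theta_n'\simeq(4-x^2)^{-1/4}\cdot n\sqrt{4-x^2}=n(4-x^2)^{1/4}$ (with the matching bound $|p_n'|\lesssim n^{5/6}$ in the transitional window), the remaining terms being smaller precisely because $2-x\ge n^{-2/3}$. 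Combining gives \eqref{eqkernel12}, and in the edge window $|2-x|\le Mn^{-2/3}$ the same input yields $\tfrac1n|p_n'(x)|\lesssim n^{-1/6}$ and $|(1-\tfrac x2)p_n(x)|\lesssim n^{-2/3}\cdot n^{1/6}=n^{-1/2}$, hence \eqref{eqkernel22}. An equivalent route --- and this is what ``elaborating on \cite{Gu}'' points to --- is to compare the two expansions of $p_n$ and $p_{n-1}$ term by term: their turning points differ by $\oo(1/n)$, so $\zeta_n-\zeta_{n-1}=\oo(1/n)$, $n^{2/3}\zeta_n-n^{2/3}\zeta_{n-1}=\oo(n^{-1/3})$, and Taylor's formula for $\ai$ together with $|\ai'(-\xi)|\lesssim(1+\xi)^{1/4}$ reproduces the same bounds, the amplitude discrepancy contributing $\oo\bigl(n^{-1}(2-x)^{-5/4}\bigr)\lesssim(2-x)^{1/4}$.

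The main obstacle I anticipate is uniformity. The two expansions have to be glued with error terms controlled \emph{simultaneously} for all $n$ and all $x$ in the relevant range, and the perturbation analysis for the index shift $n\mapsto n-1$ must be carried out with the same uniformity in each of the bulk, transitional and edge zones. Accordingly I would work from a single uniform Plancherel--Rotach expansion (\cite{DKMLV}, cf.\ \cite{Sz,Gu}) rather than patching two formulas, and the real work is to verify that every perturbation produced by the index shift is dominated by $(2-|x|)^{1/4}$ once $2-|x|\ge n^{-2/3}$ --- the borderline case $2-|x|\asymp n^{-2/3}$ being exactly the scale at which the two regimes \eqref{eqkernel11}/\eqref{eqkernel21} and \eqref{eqkernel12}/\eqref{eqkernel22} are designed to meet.
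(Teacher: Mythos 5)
Your proposal is correct in substance: the amplitude bounds \eqref{eqkernel11}, \eqref{eqkernel21} are read off exactly as in the paper (the first from the pointwise Hermite inequality in \cite[Theorem~12.1]{LeLu2}, the second from the rescaled edge asymptotic of \cite[Theorem~8.22.9]{Sz}), and your ``Route~2'' for the difference bounds \eqref{eqkernel12}, \eqref{eqkernel22} --- term-by-term comparison of the Airy-type expansions of $p_n$ and $p_{n-1}$, using that the turning-point variable shifts by $\oo(n^{-1/3})$ after the $n^{2/3}$ rescaling and then Taylor expanding $\ai$, $\ai'$ with the growth bounds $|\ai^{(j)}(-t)|\lesssim t^{(2j-1)/4}$ --- is precisely what the paper does, following \cite{Gu,DKMLV}. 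Your instinct that the crux is controlling what happens across the transitional scale $2-|x|\asymp n^{-2/3}$ is also the right one.

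The one place you should be more careful is your ``Route~1.'' The ladder identity
\[
p_n(x)-p_{n-1}(x)=\Bigl(1-\tfrac x2\Bigr)p_n(x)-\tfrac1n\,p_n'(x)
\]
is correct, and the reduction to the claim $|p_n'(x)|\lesssim n(2-|x|)^{1/4}$ is clean. But notice that this bound is an improvement by a factor $(2-|x|)^{1/2}$ over what the derivative relation $p_n'=n\,p_{n-1}-\tfrac{nx}{2}p_n$ combined with \eqref{eqkernel11} gives you, namely $|p_n'|\lesssim n(2-|x|)^{-1/4}$. Indeed the ladder identity and this derivative relation are algebraically equivalent, so the cancellation you need in $p_n-p_{n-1}$ is exactly the cancellation you need in $p_n'$: neither can be obtained from the other without new input. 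The new input you invoke is ``differentiating the Plancherel--Rotach/Airy expansion,'' but the sources cited --- \cite[Theorem~12.1]{LeLu2}, \cite[Theorem~8.22.9]{Sz}, \cite[Theorem~2.2]{DKMLV} --- control $p_n$ itself, not $p_n'$, and differentiating an asymptotic expansion does not automatically propagate the error bound. Establishing the derivative asymptotic rigorously (e.g.\ via the analyticity underlying the Riemann--Hilbert derivation in \cite{DKMLV}, or by comparing edge expansions of $p_n$ and $p_{n-1}$ and then using the recurrence) is feasible, but it amounts to essentially the same work as Route~2. So Route~1 should be understood as a reformulation, not a shortcut; the paper does not use it. Your Route~2, on the other hand, matches the paper line by line.
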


\begin{proof}
	From \cite[Theorem~12.1]{LeLu2}, for every $|t|< 2\sqrt{n}$ we have
	\begin{align*}
		|h_n(t)| e^{-t^2/4}
		\lesssim \frac{1}{(4n-t^2)^{1/4}}\lesssim \frac{1}{n^{1/4}(2-|t|/\sqrt{n})^{1/4}}.
	\end{align*}
	For $|x|< 2$, taking $t=\sqrt{n}x$ we get
	\begin{align}\label{epn}
		|p_n(x)|=n^{1/4} |h_n\!\left(\sqrt{n}x\right)|e^{-nx^2/4} \lesssim  \frac{1}{(2-|x|)^{1/4}},
	\end{align}
	which proves \eqref{eqkernel11}.
	
	The proof of \eqref{eqkernel12} is essentially implicit in the proof of \cite[Lemma~2.3]{Gu}. We provide a few details for completeness. From the asymptotics of \cite[Theorem~2.2]{DKMLV} (see also \cite{Gu}) one gets that for a sufficiently small $\delta>0$, if $2-\delta \le x \le 2-n^{-2/3}$, then
	\begin{align*}
		\bullet && p_n(x)&=
		\begin{multlined}[t]
			\Big(\frac{2+x}{2-x}\Big)^{1/4}(3nF(x))^{1/6}
			\ai(-(3nF(x))^{2/3})(1+\oo(n^{-1}))
			\\ +\Big(\frac{2-x}{2+x}\Big)^{1/4}(3nF(x))^{-1/6}
			\ai'(-(3nF(x))^{2/3})(1+\oo(n^{-1}));
		\end{multlined}
		\\
		\bullet && p_{n-1}(x)&=\begin{multlined}[t] \Big(\frac{n}{n'}\Big)^{1/4}
			\Big[\Big(\frac{2+x_n}{2-x_n}\Big)^{1/4}(3n'F(x_n))^{1/6}
			\ai(-(3n'F(x_n))^{2/3})(1+\oo(n^{-1}))
			\\ +\Big(\frac{2-x_n}{2+x_n}\Big)^{1/4}(3n'F(x_n))^{-1/6}
			\ai'(-(3n'F(x_n))^{2/3})(1+\oo(n^{-1}))\Big];
		\end{multlined}
	\end{align*}
	where $n'=n-1$, $x_n=x\sqrt{n/n'}$, $\ai$ is the Airy function given by 
	\begin{align}\label{eqair}
		\ai(x)=\frac{1}{\pi} \int_0^\infty \cos(t^3/3+xt)dt,
	\end{align}
	and
	\[F(x)=\frac{1}{4}\int_x^2\sqrt{4-t^2}dt.\]
	Fix $2-\delta \le x \le 2-n^{-2/3}$, so that these estimates hold. Since  $x_n=x+\oo(n^{-1})$, a Taylor expansion around $x$ shows that
	\begin{align*}
		\bullet && \Big(\frac{2+x_n}{2-x_n}\Big)^{1/4}(3n'F(x_n))^{1/6}=&
		\Big(\frac{2+x}{2-x}\Big)^{1/4}(3nF(x))^{1/6} +\oo(n^{-5/6});
		\\ \bullet && \Big(\frac{2-x_n}{2+x_n}\Big)^{1/4}(3n'F(x_n))^{-1/6}=&
		\Big(\frac{2-x}{2+x}\Big)^{1/4}(3nF(x))^{-1/6} +\oo(n^{-5/6}).
	\end{align*}
	
	Regarding the Airy function, from \cite[Chapter~11]{Ol}, for $t\gtrsim 1$ we have
	\begin{align}\label{eai}
		|\ai(-t)|\lesssim t^{-1/4}, \quad
		|\ai'(-t)|\lesssim t^{1/4} \quad \text{and} \quad
		|\ai''(-t)|\lesssim t^{3/4}.
	\end{align}
	Notice that
	\begin{align*}
		F(x)\simeq (2-x)\sqrt{4-x^2}\simeq (2-x)^{3/2}.
	\end{align*}
	In particular, $(3nF(x))^{2/3}\gtrsim 1$ and the estimates \eqref{eai} hold near this value. Furthermore, (after a suitable rescaling) in \cite[Lemma~2.3]{Gu} it is shown that
	\begin{align*}
		(3n'F(x_n))^{2/3}= (3nF(x))^{2/3} + \oo(n^{-1/3}).
	\end{align*}
	Expanding $\ai$ and $\ai'$ we get
	\begin{align*}
		\bullet && \ai(-(3n'F(x_n))^{2/3})&
		=\ai(-(3nF(x))^{2/3}) +\oo(n^{-1/6}(2-x)^{1/4});
		\\ \bullet && \ai'(-(3n'F(x_n))^{2/3})&=
		\ai'(-(3nF(x))^{2/3}) +\oo(n^{1/6}(2-x)^{3/4}).
	\end{align*}
	Joining everything,
	\begin{align*}
		p_{n-1}(x)&= \Big(\frac{n}{n'}\Big)^{1/4} (p_n(x)+\oo((2-x)^{1/4}))=p_n(x)+\oo((2-x)^{1/4})+\oo(n^{-3/4})
		\\ &=p_n(x)+\oo((2-x)^{1/4}).
	\end{align*}
	This proves \eqref{eqkernel12} for $2-\delta \le x \le 2-n^{-2/3}$. The case $-2+n^{-2/3} \le x \le -2+\delta$ is completely analogous. If $|x| \le 2-\delta$, again by \cite[Theorem~12.1]{LeLu2} for $|t|<2\sqrt{n-1}$ we get
	\begin{align*}
		|h_{n-1}(t)| e^{-t^2/4}
		\lesssim \frac{1}{(4(n-1)-t^2)^{1/4}}\lesssim \frac{1}{n^{1/4}(2-|t|/\sqrt{n-1})^{1/4}}.
	\end{align*}
	Since $|x|\le 2-\delta$, taking $t=\sqrt{n}x$ we get that $|t|\le (2-\delta/2)\sqrt{n-1}$ if $n$ is sufficiently large. Therefore,
	\begin{align*}
		|p_{n-1}(x)|&=n^{1/4} |h_{n-1}\!\left(\sqrt{n}x\right)|e^{-nx^2/4} \lesssim  \frac{1}{n^{1/4}(2-|t|/\sqrt{n-1})^{1/4}} =\oo(1)
		\\&=\oo((2-|x|)^{1/4}),
	\end{align*}
	where we leave the dependence on $\delta$ implicit since this parameter is fixed throughout the proof.
	This together with \eqref{epn} gives
	\[p_n(x)-p_{n-1}(x)=\oo(1)=\oo((2-|x|)^{1/4}).\]
	Finally, for small $n$ and $|x| \le 2-\delta$ it is clear that
	\[p_n(x)-p_{n-1}(x)=\oo((2-|x|)^{1/4}),\]
	which completes the proof of \eqref{eqkernel12}.
	
	Now we proceed to show \eqref{eqkernel21}. Rescaling \cite[Theorem~8.22.9]{Sz}, since $x=2+\oo(n^{-2/3})$ for $k=n,n-1$ we have that
	\begin{align}\label{epk}
		p_k(x)=n^{1/4} k^{-1/12} (\ai(-s_k)+\oo(n^{-2/3})),
	\end{align}
	where
	\[s_k=k^{1/6}\Big(\sqrt{k+\frac{1}{2}}-\frac{\sqrt{n}x}{2}\Big).\]
	Note that
	\[s_k=k^{1/6}\Big(\sqrt{n}+\oo(n^{-1/2})-\sqrt{n}+\oo(n^{-1/6})\Big)=k^{1/6}\oo(n^{-1/6})=\oo(1).\]
	Since $\ai$ is continuous and $s_n$ is bounded by a constant depending only on $M$, \eqref{eqkernel21} follows from \eqref{epk}.
	
	Regarding \eqref{eqkernel22}, we have
	\begin{align*}
		s_{n-1}&=(n-1)^{1/6}\Big(\sqrt{n-\frac{1}{2}}-\frac{\sqrt{n}x}{2}\Big)=n^{1/6}\Big(1-\frac{1}{n}\Big)^{1/6}\Big(\sqrt{n-\frac{1}{2}}-\frac{\sqrt{n}x}{2}\Big)
		\\&=(1+\oo(n^{-1}))s_n  (1+\oo(n^{-1}))n^{1/6}\Big(\sqrt{n-\frac{1}{2}}-\sqrt{n+\frac{1}{2}}\Big)
		\\&=s_n+\oo(n^{-1})+ \oo(n^{-1/3})=s_n+ \oo(n^{-1/3}).
	\end{align*}
	So, expanding $\ai(s_{n-1})$,
	\[\ai(-s_{n-1})=\ai(-s_{n})+\oo(n^{-1/3}).\]
	Plugging this into \eqref{epk},
	\begin{align*}
		p_{n-1}(x)&=n^{1/4} (n-1)^{-1/12} (\ai(-s_{n-1})+\oo(n^{-2/3}))
		\\ &=n^{1/4} n^{-1/12}(1+\oo(n^{-1})) (\ai(-s_{n})+\oo(n^{-1/3}))
		\\ &=(1+\oo(n^{-1}))p_n(x)+\oo(n^{-1/6})=p_n(x)+\oo(n^{-1/6}),
	\end{align*}
	where in the last step we used \eqref{eqkernel21}.
\end{proof}

\begin{lemma}[Off-diagonal estimates for the reproducing kernel]
	\label{leker}
	The following estimates hold.
	\begin{enumerate}
		\item \label{it1} For every $n\in \N$ and $|y|\le |x|\le 2-n^{-2/3}$ we have
		\begin{align}
			\label{eqkernel1}
			K_n(x,y)= \oo\Big( \frac{(2-|y|)^{1/4}}{(2-|x|)^{1/4}|x-y|} \Big).
		\end{align}
		\item \label{it2}  Fix $M\ge 1$.  For every $n\in \N$ and every $|y|\le 2- n^{-2/3}\le |x| \le 2 + M n^{-2/3}$ we have
		\begin{align}
			\label{eqkernel2}
			K_n(x,y)= \oo\Big( \frac{n^{1/6} (2-|y|)^{1/4} }{|x-y|} \Big),
		\end{align}
		where the implicit constant depends only on $M$.
	\end{enumerate}
\end{lemma}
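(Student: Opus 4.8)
The plan is to read off both bounds from the Christoffel--Darboux identity \eqref{eqCD} combined with the pointwise estimates of Lemma~\ref{lpn}. The one nontrivial idea is that the numerator of \eqref{eqCD} should not be bounded term by term, but rather telescoped: adding and subtracting $p_n(x)p_n(y)$ gives
\[
p_n(x)p_{n-1}(y)-p_n(y)p_{n-1}(x)
= p_n(x)\bigl(p_{n-1}(y)-p_n(y)\bigr)+p_n(y)\bigl(p_n(x)-p_{n-1}(x)\bigr),
\]
hence
\[
K_n(x,y)=\frac{p_n(x)\bigl(p_{n-1}(y)-p_n(y)\bigr)+p_n(y)\bigl(p_n(x)-p_{n-1}(x)\bigr)}{x-y}.
\]
Now each summand is a product of one ``large'' factor $p_n(\cdot)$ (of size $(2-|\cdot|)^{-1/4}$ away from the edge, $n^{1/6}$ near it) and one ``small'' consecutive difference (of size $(2-|\cdot|)^{1/4}$, resp.\ $n^{-1/6}$), precisely the two quantities estimated in Lemma~\ref{lpn}. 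Throughout we may assume $x\ne y$, since otherwise the right-hand sides of \eqref{eqkernel1} and \eqref{eqkernel2} are $+\infty$.

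For \ref{it1}, both $|x|,|y|\le 2-n^{-2/3}$, so \eqref{eqkernel11} and \eqref{eqkernel12} apply at each point. The first summand is $O\bigl((2-|x|)^{-1/4}(2-|y|)^{1/4}\bigr)$, which after division by $|x-y|$ is exactly the claimed bound; the second summand is $O\bigl((2-|x|)^{1/4}(2-|y|)^{-1/4}\bigr)$, and since $|y|\le|x|$ forces $2-|y|\ge 2-|x|>0$ we have $(2-|x|)^{1/4}(2-|y|)^{-1/4}\le 1\le(2-|x|)^{-1/4}(2-|y|)^{1/4}$, so the second summand is absorbed into the first. This gives \eqref{eqkernel1}.

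For \ref{it2}, the hypothesis $2-n^{-2/3}\le|x|\le 2+Mn^{-2/3}$ yields $\bigl|\,2-|x|\,\bigr|\le Mn^{-2/3}$, so \eqref{eqkernel21} and \eqref{eqkernel22} apply at $x$ (with constants depending only on $M$), while \eqref{eqkernel11} and \eqref{eqkernel12} still apply at $y$ since $|y|\le 2-n^{-2/3}$. The first summand is then $O\bigl(n^{1/6}(2-|y|)^{1/4}\bigr)$, matching the claimed numerator, and the second is $O\bigl(n^{-1/6}(2-|y|)^{-1/4}\bigr)$. Because $|y|\le 2-n^{-2/3}$ implies $2-|y|\ge n^{-2/3}$, equivalently $n^{1/3}(2-|y|)^{1/2}\ge 1$, we get $n^{-1/6}(2-|y|)^{-1/4}\le n^{1/6}(2-|y|)^{1/4}$, so again the second summand is dominated by the first. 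Division by $|x-y|$ gives \eqref{eqkernel2}.

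Since Lemma~\ref{lpn} — which packages the Plancherel--Rotach / Airy asymptotics — is already available, there is no serious obstacle here; the whole point is to use the telescoped form, as a naive bound of $p_n(x)p_{n-1}(y)$ and $p_n(y)p_{n-1}(x)$ separately would produce only the much weaker $O\bigl((2-|x|)^{-1/4}(2-|y|)^{-1/4}|x-y|^{-1}\bigr)$ in \ref{it1} and would lose a factor $n^{1/3}$ at the edge in \ref{it2}. The only things left to double-check are the two elementary monotonicity facts just used — $2-|y|\ge 2-|x|$ in \ref{it1} (from $|y|\le|x|$) and $2-|y|\ge n^{-2/3}$ in \ref{it2} (from $|y|\le 2-n^{-2/3}$) — which are exactly what makes the ``wrong-order'' summand harmless.
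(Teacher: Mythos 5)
Your proposal is correct and follows essentially the same route as the paper: the paper also telescopes the Christoffel--Darboux numerator as $p_n(x)(p_{n-1}(y)-p_n(y))+p_n(y)(p_n(x)-p_{n-1}(x))$, applies Lemma~\ref{lpn} in each region, and for part \ref{it2} invokes $n^{-1/3}\le\sqrt{2-|y|}$ to control the second term. You merely spell out the two elementary monotonicity comparisons that the paper leaves tacit.
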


\begin{proof}
	From the Christoffel-Darboux formula \eqref{eqCD},
	\[|K_n(x,y)|\le \frac{|p_n(x)||p_{n-1}(y)-p_n(y)|}{|x-y|}+ \frac{|p_n(y)||p_n(x)-p_{n-1}(x)|}{|x-y|}.\]
	Applying Lemma \ref{lpn} to $x$ and $y$ according to the region where they lie gives the result. We only point out that for the second statement one has to use the bound
	\[n^{-1/3} \le \sqrt{2-|y|}. \qedhere\]
\end{proof}

\begin{remark} For $|x|,|y|\le 2-\delta$, \eqref{eqkernel1} reduces to $|K_n(x,y)|=O(|x-y|^{-1})$. This off-diagonal bulk estimate away from the boundary is known for a broad class of potentials, see for example Kuijlaars' lecture notes \cite[Eq. (5.44)]{Kuijlaars}.
\end{remark}

Now we are in position to prove Lemma~\ref{lemtech}.

\begin{proof}[Proof of Lemma~\ref{lemtech}]
	Let us start with the first assertion. If $|x|\le|y|$, by Lemma~\ref{lspinoff} we have that $K_n(y,y)\lesssim K_n(x,x)$ and thus
	\begin{align}
		\label{eqint3}
		\int_{\{x:|x|\le |y| \}} \frac{K_n(x,y)^2}{K_n(x,x)^3}\, dx\lesssim \frac{1}{K_n(y,y)^3}\int_\R K_n(x,y)^2\,  dx = \frac{1}{K_n(y,y)^2}.
	\end{align}
	So we only need to estimate
	\begin{align}
		\label{eqint33}
		\int_{\{x:|y|\le |x| \le 2\}}\frac{K_n(x,y)^2}{K_n(x,x)^3}\, dx.
	\end{align}
	
	Notice that if $ 2- 2 n^{-2/3} \le |y|\le |x| \leq 2$, then $K_n(x,x) \simeq K_n(y,y)\simeq n^{2/3}$
	and
	\begin{align}
		\label{eqint32}
		\int_{\{x:|y|\le |x| \le 2\}} \frac{K_n(x,y)^2}{K_n(x,x)^3}\, dx &\le
		\int_{\{x:|y|\le |x| \le 2\}} \frac{K_n(y,y)}{K_n(x,x)^2}\, dx
		\\ &\lesssim n^{-4/3} \simeq \frac{1}{K_n(y,y)^2}. \notag
	\end{align}
	
	It remains to study the integral in \eqref{eqint33} for $|y|\le 2-2n^{-2/3}$. So we fix an arbitrary point $y$ satisfying $|y|\le 2-2n^{-2/3}$. We divide our analysis into three parts,
	according to the distance to the point $y$ and the position of $x$. For notational simplicity, the regions considered are a covering rather than a partition of $\{x:|y|\le |x| \le 2\}$.
	
	\textbf{Case 1:} first we deal with the values of $x$ such that $|x-y|\le K_n(y,y)^{-1}$. In this case, by Lemma \ref{lspinoff}, \eqref{eqcs} and Remark~\ref{lemma_a} we have that $K_n(y,y)\simeq K_n(x,x)$ and $K_n(x,y)^2\le K_n(x,x)K_n(y,y)\simeq K_n(y,y)^2$. So we get
	\begin{align}
		\label{eqint4}
		\int_{\{x: |x-y|\le K_n(y,y)^{-1}\}} \frac{K_n(x,y)^2}{K_n(x,x)^3}\, dx\lesssim
		\frac{1}{K_n(y,y)^2}.
	\end{align}
	\textbf{Case 2:}
	for $|x|\le 2-n^{-2/3}$, using \eqref{eqkernel1} and Lemma \ref{lspinoff},
	
	\[\frac{K_n(x,y)^2}{K_n(x,x)^3} \lesssim  \frac{(2-|y|)^{1/2}}{n^3(2-|x|)^{2}(x-y)^2}.\]
	Therefore, denoting $A_1=\{x\in\R: |x-y|\geq K_n(y,y)^{-1}, |x|\le 2- n^{-2/3}\}$,
	\begin{align*}
		\int_{A_1} \frac{K_n(x,y)^2}{K_n(x,x)^3}\, dx \lesssim \frac{(2-|y|)^{1/2}}{n^3}
		\int_{A_1} \frac{1}{(x-y)^2(2-|x|)^{2}}\, dx.
	\end{align*}
	Notice that
	\[2-|y|=2-|y-x+x|\le 2-(|x|-|x-y|)=2-|x|+|x-y|.\]
	So,
	\begin{align}\label{eqk3}
		\int_{A_1} \frac{K_n(x,y)^2}{K_n(x,x)^3}\, dx
		&\lesssim  \frac{1}{  n^3(2-y)^{3/2}} \int_{A_1} \frac{(2-|y|)^2}{|x-y|^2(2-|x|)^{2}}\, dx
		\\ &\le   \frac{1}{  n^3(2-y)^{3/2}}\int_{A_1}\Big( \frac{1}{|x-y|} + \frac{1}{2-|x|}\Big)^2 dx \notag
		\\ &\lesssim   \frac{1}{  n^3(2-y)^{3/2}}\Big(\int_{A_1} \frac{1}{(x-y)^2}\, dx + \int_{A_1} \frac{1}{(2-|x|)^2}\,  dx \Big)\notag
		\\ & \lesssim \frac{1}{ n^3(2-y)^{3/2}} \Big( \int_{K_n(y,y)^{-1}}^\infty \frac{1}{t^2}\, dt+ \int_{n^{-2/3}}^{\infty } \frac{1}{t^2}\, dt\Big) \notag
		\\ &= \frac{1}{ n^3(2-y)^{3/2}}  \left(K_n(y,y) +n^{2/3}\right)
		\lesssim \frac{1}{ K_n(y,y)^{3}}  K_n(y,y)\notag \\
		&
		= \frac{1}{ K_n(y,y)^{2}}  . \notag
	\end{align}
	
	\textbf{Case 3:}
	denote $A_2=\{x\in\R: 2- n^{-2/3}\le |x|\le 2\}$. Since $|y|\le 2-2n^{-2/3}$; a straightforward computation shows that
	\[2-|y|\le 2(2-n^{-2/3}-|y|)\le 2(|x|-|y|)\le 2 |x-y|.\]
	For $x\in A_2$, using this together with \eqref{eqkernel2} and Lemma \ref{lspinoff} we get
	\[\frac{K_n(x,y)^2}{K_n(x,x)^3} \lesssim  \frac{n^{1/3}(2-|y|)^{1/2}}{n^{2}(x-y)^2}\lesssim \frac{n^{1/3}}{n^{2}(2-|y|)^{3/2}}\lesssim \frac{n^{1/3}}{K_n(y,y)^{2}(2-|y|)^{1/2}}\lesssim \frac{n^{2/3}}{K_n(y,y)^{2}}.\]
	So,
	\begin{align*}
		\int_{A_2}  \frac{K_n(x,y)^2}{K_n(x,x)^3}\, dx
		\lesssim \frac{n^{2/3}}{  K_n(y,y)^{2}} |A_2|
		\lesssim \frac{1}{ K_n(y,y)^{2}}  .
	\end{align*}
	
	This together with \eqref{eqint3}, \eqref{eqint32}, \eqref{eqint4} and \eqref{eqk3}, prove \eqref{eqint}.

	The proof of \eqref{eqint2} is similar. We begin by fixing $y$ with $|y|\le 2-Mn^{-2/3}$. If $|x|\ge|y|$, then by Lemma \ref{lspinoff} and \eqref{eqcs} we have that $K_n(x,x)\lesssim K_n(y,y)$ and $K_n(x,y)^2 \le K_n(x,x) K_n(y,y)\lesssim K_n(y,y)^2$. Thus,
	\begin{align}
		\label{eqint30}
		\int_{\{x:|y|\le |x| \le 2\}} K_n(x,y)^6 K_n(x,x)\, dx\lesssim K_n(y,y)^5 \int_\R K_n(x,y)^2\, dx = K_n(y,y)^6.
	\end{align}
	So we only need to estimate
	\[ \int_{\{x:|x|\le |y| \}} K_n(x,y)^6 K_n(x,x)\, dx.\]
	
	As before, we divide our analysis according to the distance between $x$ and $y$, and the position of $y$.
	
	\textbf{Case 1:} consider first the values of $x$ such that $|x-y|\le K_n(y,y)^{-1}$. By Lemma~\ref{lspinoff}, \eqref{eqcs} and Remark \ref{lemma_a}, $K_n(y,y)\simeq K_n(x,x)$ and $K_n(x,y)^2\lesssim K_n(x,x)K_n(y,y)\simeq K_n(y,y)^2$. We get
	\begin{align}
		\label{eqint8}
		\int_{\{x: |x-y|\le K_n(y,y)^{-1}, \, |x|\le|y|\}}  & K_n(x,y)^6 K_n(x,x)\, dx\\
		&\lesssim
		K_n(y,y)^7 K_n(y,y)^{-1} = K_n(y,y)^{6}.\nonumber
	\end{align}
	\textbf{Case 2:}
	if $|y|\le 2-n^{-2/3}$, using \eqref{eqkernel1} and Lemma \ref{lspinoff},
	\[K_n(x,y)^6 K_n(x,x) \lesssim  \frac{n (2-|x|)^{2}}{(2-|y|)^{3/2}|x-y|^6}.\]
	Define $A_3:=\{x\in\R: |x-y|\geq K_n(y,y)^{-1}, |x|\le |y|\}$. Since by the triangle inequality $2-|x|\le 2-|y|+|x-y|$, we get
	\begin{align}
		\label{eqint9}
		\int_{A_3}  & K_n(x,y)^6  K_n(x,x)\, dx
		\lesssim \int_{A_3} \frac{n(2-|x|)^2}{ (2-|y|)^{3/2}|x-y|^6}\, dx
		\\ & \le \frac{n }{(2-|y|)^{3/2}}\int_{A_3} \frac{(x-y)^2 + 2 |x-y|(2-|y|)+ (2-|y|)^2}{|x-y|^6}\, dx \notag
		\\ & \lesssim n\Big(\frac{K_n(y,y)^3}{(2-|y|)^{3/2}}+ \frac{K_n(y,y)^4}{(2-|y|)^{1/2}} +K_n(y,y)^5(2-|y|)^{1/2} \Big) \notag
		\\ &\lesssim n^2 K_n(y,y)^3 + n^{4/3} K_n(y,y)^4 +  K_n(y,y)^6 \lesssim K_n(y,y)^6, \notag
	\end{align}
	where in the last step we used $n^{2/3}\lesssim K_n(y,y)$.
	
	\textbf{Case 3:} assume $|y|\ge 2-n^{-2/3}$. Notice that if also  $|x|\ge 2-n^{-2/3}$, then $K_n(x,x) \simeq K_n(y,y)\simeq n^{2/3}$
	and
	\begin{align}
		\label{eqint36}
		\int_{\{x:\ 2-  n^{-2/3} \le |x|\le |y|\}} & K_n(x,y)^6 K_n(x,x)\, dx \\
		&\le \int_{\{x:\ 2-  n^{-2/3} \le |x|\le |y|\}} K_n(y,y)^3 K_n(x,x)^4\, dx\notag
		\\ &\lesssim  K_n(y,y)^7 n^{-2/3}\simeq  K_n(y,y)^6. \notag
	\end{align}
	So it remains to deal with the region $A_4:=\{x\in\R: |x-y|\geq K_n(y,y)^{-1}, |x|\le 2-n^{-2/3}\}$.
	Using again that $2-|x|\le 2-|y|+|x-y|$ together with \eqref{eqkernel2}
	and Lemma~\ref{lspinoff},
	\begin{align*}
		\int_{A_4}  K_n(x,y)^6 & K_n(x,x)\, dx
		\lesssim \int_{A_4} \frac{n^2(2-|x|)^2}{ |x-y|^6}\, dx \notag
		\\ & = n^2\int_{A_4} \frac{(x-y)^2 + 2 |x-y|(2-|y|)+ (2-|y|)^2}{|x-y|^6}\, dx \notag
		\\ & \lesssim n^2\Big(K_n(y,y)^3+ K_n(y,y)^4(2-|y|) +K_n(y,y)^5(2-|y|)^{2} \Big) \notag
		\\ &\lesssim n^2 K_n(y,y)^3 + n^{4/3} K_n(y,y)^4 +  n^{2/3}K_n(y,y)^5 \lesssim K_n(y,y)^6,
	\end{align*}
	where in the last step we used $n^{2/3}\lesssim K_n(y,y)$. This together with \eqref{eqint30},
	\eqref{eqint8}, \eqref{eqint9} and \eqref{eqint36} prove \eqref{eqint2}.
\end{proof}

\bibliographystyle{abbrv}
\bibliography{biblio}

\begin{thebibliography}{10}

\bibitem{AGK22}
G.~Akemann, V.~Gorski, and M.~Kieburg.
\newblock Consecutive level spacings in the chiral {G}aussian unitary ensemble:
  from the hard and soft edge to the bulk.
\newblock {\em J. Phys. A}, 55(19):Paper No. 194002, 34, 2022.

\bibitem{AMP}
G.~Akemann, A.~Mielke, and P.~P\"{a}\ss~ler.
\newblock Spacing distribution in the two-dimensional {C}oulomb gas: surmise
  and symmetry classes of non-{H}ermitian random matrices at noninteger
  {$\beta$}.
\newblock {\em Phys. Rev. E}, 106(1):Paper No. 014146, 11, 2022.

\bibitem{Am2}
Y.~Ameur.
\newblock Repulsion in low temperature {$\beta$}-ensembles.
\newblock {\em Comm. Math. Phys.}, 359(3):1079--1089, 2018.

\bibitem{Am}
Y.~Ameur.
\newblock A localization theorem for the planar {C}oulomb gas in an external
  field.
\newblock {\em Electron. J. Probab.}, 26:Paper No. 46--21, 2021.

\bibitem{AB21}
Y.~Ameur and S.-S. Byun.
\newblock Almost-{H}ermitian random matrices and bandlimited point processes.
\newblock {\em Anal. Math. Phys.}, 13(3):Paper No. 52, 57, 2023.

\bibitem{AmOC}
Y.~Ameur and J.~Ortega-Cerd\`a.
\newblock Beurling-{L}andau densities of weighted {F}ekete sets and correlation
  kernel estimates.
\newblock {\em J. Funct. Anal.}, 263(7):1825--1861, 2012.

\bibitem{AmRom}
Y.~Ameur and J.~L. Romero.
\newblock The planar low temperature {C}oulomb gas: separation and
  equidistribution.
\newblock {\em Rev. Mat. Iberoam.}, 39(2):611--648, 2023.

\bibitem{AmTr}
Y.~Ameur and E.~Troedsson.
\newblock Remarks on the one-point density of hele-shaw $\beta$-ensembles.
\newblock {\em arXiv preprint:2402.13882}, 2024.

\bibitem{AS21}
S.~Armstrong and S.~Serfaty.
\newblock Local laws and rigidity for {C}oulomb gases at any temperature.
\newblock {\em Ann. Probab.}, 49(1):46--121, 2021.

\bibitem{BaSu}
Z.~Bao and Z.~Su.
\newblock Local semicircle law and gaussian fluctuation for hermite {$\beta$}
  ensemble.
\newblock {\em Sci. Sin. Math.}, 42(10):1017--1030, 2012.

\bibitem{BeLeSe}
F.~Bekerman, T.~Lebl\'{e}, and S.~Serfaty.
\newblock C{LT} for fluctuations of {$\beta$}-ensembles with general potential.
\newblock {\em Electron. J. Probab.}, 23:Paper no. 115, 31, 2018.

\bibitem{BeLo}
F.~Bekerman and A.~Lodhia.
\newblock Mesoscopic central limit theorem for general {$\beta$}-ensembles.
\newblock {\em Ann. Inst. Henri Poincar\'{e} Probab. Stat.}, 54(4):1917--1938,
  2018.

\bibitem{BoJaKa}
A.~Bonami, P.~Jaming, and A.~Karoui.
\newblock Non-asymptotic behavior of the spectrum of the sinc-kernel operator
  and related applications.
\newblock {\em J. Math. Phys.}, 62(3):Paper No. 033511, 20, 2021.

\bibitem{BHS}
S.~V. Borodachov, D.~P. Hardin, and E.~B. Saff.
\newblock {\em Discrete energy on rectifiable sets}.
\newblock Springer Monographs in Mathematics. Springer, New York, [2019]
  \copyright 2019.

\bibitem{BoErYa}
P.~Bourgade, L.~Erd\H{o}s, and H.-T. Yau.
\newblock Universality of general {$\beta$}-ensembles.
\newblock {\em Duke Math. J.}, 163(6):1127--1190, 2014.

\bibitem{BoErYa2}
P.~Bourgade, L.~Erd\"{o}s, and H.-T. Yau.
\newblock Edge universality of beta ensembles.
\newblock {\em Comm. Math. Phys.}, 332(1):261--353, 2014.

\bibitem{CSA}
G.~Cardoso, J.-M. St\'{e}phan, and A.~G. Abanov.
\newblock The boundary density profile of a {C}oulomb droplet. {F}reezing at
  the edge.
\newblock {\em J. Phys. A}, 54(1):Paper No. 015002, 24, 2021.

\bibitem{MR3831027}
T.~Carroll, J.~Marzo, X.~Massaneda, and J.~Ortega-Cerd\`a.
\newblock Equidistribution and {$\beta$}-ensembles.
\newblock {\em Ann. Fac. Sci. Toulouse Math. (6)}, 27(2):377--387, 2018.

\bibitem{DKMLV}
P.~Deift, T.~Kriecherbauer, K.~T.-R. McLaughlin, S.~Venakides, and X.~Zhou.
\newblock Strong asymptotics of orthogonal polynomials with respect to
  exponential weights.
\newblock {\em Comm. Pure Appl. Math.}, 52(12):1491--1552, 1999.

\bibitem{DeKrMLVeZh2}
P.~Deift, T.~Kriecherbauer, K.~T.-R. McLaughlin, S.~Venakides, and X.~Zhou.
\newblock Uniform asymptotics for polynomials orthogonal with respect to
  varying exponential weights and applications to universality questions in
  random matrix theory.
\newblock {\em Comm. Pure Appl. Math.}, 52(11):1335--1425, 1999.

\bibitem{De99}
P.~A. Deift.
\newblock {\em Orthogonal polynomials and random matrices: a
  {R}iemann-{H}ilbert approach}, volume~3 of {\em Courant Lecture Notes in
  Mathematics}.
\newblock New York University, Courant Institute of Mathematical Sciences, New
  York; American Mathematical Society, Providence, RI, 1999.

\bibitem{DH90}
B.~Dietz and F.~Haake.
\newblock Taylor and {P}ad\'{e} analysis of the level spacing distributions of
  random-matrix ensembles.
\newblock {\em Z. Phys. B}, 80(1):153--158, 1990.

\bibitem{DuEd}
I.~Dumitriu and A.~Edelman.
\newblock Matrix models for beta ensembles.
\newblock {\em J. Math. Phys.}, 43(11):5830--5847, 2002.

\bibitem{ErXu}
L.~Erd\H{o}s and Y.~Xu.
\newblock Small deviation estimates for the largest eigenvalue of {W}igner
  matrices.
\newblock {\em Bernoulli}, 29(2):1063--1079, 2023.

\bibitem{ErYa}
L.~Erd\H{o}s and H.-T. Yau.
\newblock Gap universality of generalized {W}igner and {$\beta$}-ensembles.
\newblock {\em J. Eur. Math. Soc. (JEMS)}, 17(8):1927--2036, 2015.

\bibitem{FeTiWe}
R.~Feng, G.~Tian, and D.~Wei.
\newblock Small gaps of {GOE}.
\newblock {\em Geom. Funct. Anal.}, 29(6):1794--1827, 2019.

\bibitem{FeWe}
R.~Feng and D.~Wei.
\newblock Small gaps of circular {$\beta$}-ensemble.
\newblock {\em Ann. Probab.}, 49(2):997--1032, 2021.

\bibitem{Fo2}
P.~J. Forrester.
\newblock The spectrum edge of random matrix ensembles.
\newblock {\em Nuclear Phys. B}, 402(3):709--728, 1993.

\bibitem{Fo}
P.~J. Forrester.
\newblock {\em Log-gases and random matrices}, volume~34 of {\em London
  Mathematical Society Monographs Series}.
\newblock Princeton University Press, Princeton, NJ, 2010.

\bibitem{F22}
P.~J. Forrester.
\newblock A review of exact results for fluctuation formulas in random matrix
  theory.
\newblock {\em Probab. Surv.}, 20:170--225, 2023.

\bibitem{GMW}
T.~Guhr, A.~M\"{u}ller-Groeling, and H.~A. Weidenm\"{u}ller.
\newblock Random-matrix theories in quantum physics: common concepts.
\newblock {\em Phys. Rep.}, 299(4-6):189--425, 1998.

\bibitem{Gu}
J.~Gustavsson.
\newblock Gaussian fluctuations of eigenvalues in the {GUE}.
\newblock {\em Ann. Inst. H. Poincar\'{e} Probab. Statist.}, 41(2):151--178,
  2005.

\bibitem{HoPa}
D.~Holcomb and E.~Paquette.
\newblock The maximum deviation of the {${\rm Sine}_\beta$} counting process.
\newblock {\em Electron. Commun. Probab.}, 23:Paper No. 58, 13, 2018.

\bibitem{HoVa}
D.~Holcomb and B.~Valk\'{o}.
\newblock Overcrowding asymptotics for the {$\rm sine_\beta$} process.
\newblock {\em Ann. Inst. Henri Poincar\'{e} Probab. Stat.}, 53(3):1181--1195,
  2017.

\bibitem{Ism}
M.~E.~H. Ismail.
\newblock An electrostatics model for zeros of general orthogonal polynomials.
\newblock {\em Pacific J. Math.}, 193(2):355--369, 2000.

\bibitem{israel15}
A.~Israel.
\newblock The eigenvalue distribution of time-frequency localization operators.
\newblock {\em arXiv preprint:1502.04404}.

\bibitem{Jo}
K.~Johansson.
\newblock On fluctuations of eigenvalues of random {H}ermitian matrices.
\newblock {\em Duke Math. J.}, 91(1):151--204, 1998.

\bibitem{KaRoDa}
S.~Karnik, J.~Romberg, and M.~A. Davenport.
\newblock Improved bounds for the eigenvalues of prolate spheroidal wave
  functions and discrete prolate spheroidal sequences.
\newblock {\em Appl. Comput. Harmon. Anal.}, 55:97--128, 2021.

\bibitem{KaZhWaRoDa}
S.~Karnik, Z.~Zhu, M.~B. Wakin, J.~Romberg, and M.~A. Davenport.
\newblock The fast {S}lepian transform.
\newblock {\em Appl. Comput. Harmon. Anal.}, 46(3):624--652, 2019.

\bibitem{KaAbMe}
A.~Karoui, I.~Mehrzi, and T.~Moumni.
\newblock Eigenfunctions of the {A}iry's integral transform: properties,
  numerical computations and asymptotic behaviors.
\newblock {\em J. Math. Anal. Appl.}, 389(2):989--1005, 2012.

\bibitem{KrML}
T.~Kriecherbauer and K.~T.-R. McLaughlin.
\newblock Strong asymptotics of polynomials orthogonal with respect to {F}reud
  weights.
\newblock {\em Internat. Math. Res. Notices}, (6):299--333, 1999.

\bibitem{Kuijlaars}
A.~Kuijlaars.
\newblock Lecture notes on {R}iemann-{H}ilbert problems and multiple orthogonal
  polynomials.
\newblock {\em Constructive Functions 2014, Nashville TN, USA,
  http://wis.kuleuven.be/analyse/arno}, 2014.

\bibitem{LaLeWe}
G.~Lambert, M.~Ledoux, and C.~Webb.
\newblock Quantitative normal approximation of linear statistics of
  {$\beta$}-ensembles.
\newblock {\em Ann. Probab.}, 47(5):2619--2685, 2019.

\bibitem{LP22}
G.~Lambert and E.~Paquette.
\newblock Strong approximation of {G}aussian {$\beta$} ensemble characteristic
  polynomials: the hyperbolic regime.
\newblock {\em Ann. Appl. Probab.}, 33(1):549--612, 2023.

\bibitem{MR222554}
H.~J. Landau.
\newblock Necessary density conditions for sampling and interpolation of
  certain entire functions.
\newblock {\em Acta Math.}, 117:37--52, 1967.

\bibitem{MR593228}
H.~J. Landau and H.~Widom.
\newblock Eigenvalue distribution of time and frequency limiting.
\newblock {\em J. Math. Anal. Appl.}, 77(2):469--481, 1980.

\bibitem{Le16}
T.~Lebl\'{e}.
\newblock Logarithmic, {C}oulomb and {R}iesz energy of point processes.
\newblock {\em J. Stat. Phys.}, 162(4):887--923, 2016.

\bibitem{LeRi}
M.~Ledoux and B.~Rider.
\newblock Small deviations for beta ensembles.
\newblock {\em Electron. J. Probab.}, 15:no. 41, 1319--1343, 2010.

\bibitem{LeLu}
A.~L. Levin and D.~S. Lubinsky.
\newblock {$L_\infty$} {M}arkov and {B}ernstein inequalities for {F}reud
  weights.
\newblock {\em SIAM J. Math. Anal.}, 21(4):1065--1082, 1990.

\bibitem{LeLu3}
A.~L. Levin and D.~S. Lubinsky.
\newblock Christoffel functions, orthogonal polynomials, and {N}evai's
  conjecture for {F}reud weights.
\newblock {\em Constr. Approx.}, 8(4):463--535, 1992.

\bibitem{LeLu4}
A.~L. Levin and D.~S. Lubinsky.
\newblock {$L_p$} {M}arkov-{B}ernstein inequalities for {F}reud weights.
\newblock {\em J. Approx. Theory}, 77(3):229--248, 1994.

\bibitem{LeLu2}
E.~Levin and D.~S. Lubinsky.
\newblock {\em Orthogonal polynomials for exponential weights}, volume~4 of
  {\em CMS Books in Mathematics/Ouvrages de Math\'{e}matiques de la SMC}.
\newblock Springer-Verlag, New York, 2001.

\bibitem{LeLu5}
E.~Levin and D.~S. Lubinsky.
\newblock Universality limits for exponential weights.
\newblock {\em Constr. Approx.}, 29(2):247--275, 2009.

\bibitem{LeLu6}
E.~Levin and D.~S. Lubinsky.
\newblock Universality limits at the soft edge of the spectrum via classical
  complex analysis.
\newblock {\em Int. Math. Res. Not. IMRN}, (13):3006--3070, 2011.

\bibitem{Lew22}
M.~Lewin.
\newblock Coulomb and {R}iesz gases: {T}he known and the unknown.
\newblock {\em J. Math. Phys.}, 63(6):Paper No. 061101, 77, 2022.

\bibitem{MR2006561}
N.~Marco, X.~Massaneda, and J.~Ortega-Cerd\`a.
\newblock Interpolating and sampling sequences for entire functions.
\newblock {\em Geom. Funct. Anal.}, 13(4):862--914, 2003.

\bibitem{MNT}
A.~M\'{a}t\'{e}, P.~Nevai, and V.~Totik.
\newblock Asymptotics for the zeros of orthogonal polynomials associated with
  infinite intervals.
\newblock {\em J. London Math. Soc. (2)}, 33(2):303--310, 1986.

\bibitem{MLMi}
K.~T.-R. McLaughlin and P.~D. Miller.
\newblock The {$\overline{\partial}$} steepest descent method for orthogonal
  polynomials on the real line with varying weights.
\newblock {\em Int. Math. Res. Not. IMRN}, pages Art. ID rnn 075, 66, 2008.

\bibitem{Meh}
M.~L. Mehta.
\newblock {\em Random matrices}, volume 142 of {\em Pure and Applied
  Mathematics (Amsterdam)}.
\newblock Elsevier/Academic Press, Amsterdam, third edition, 2004.

\bibitem{NaTr}
F.~Nakano and K.~D. Trinh.
\newblock Gaussian beta ensembles at high temperature: eigenvalue fluctuations
  and bulk statistics.
\newblock {\em J. Stat. Phys.}, 173(2):295--321, 2018.

\bibitem{MR2929058}
S.~Nitzan and A.~Olevskii.
\newblock Revisiting {L}andau's density theorems for {P}aley-{W}iener spaces.
\newblock {\em C. R. Math. Acad. Sci. Paris}, 350(9-10):509--512, 2012.

\bibitem{Ol}
F.~W.~J. Olver.
\newblock {\em Asymptotics and special functions}.
\newblock AKP Classics. A K Peters, Ltd., Wellesley, MA, 1997.
\newblock Reprint of the 1974 original [Academic Press, New York; MR0435697 (55
  \#8655)].

\bibitem{Os}
A.~Osipov.
\newblock Certain upper bounds on the eigenvalues associated with prolate
  spheroidal wave functions.
\newblock {\em Appl. Comput. Harmon. Anal.}, 35(2):309--340, 2013.

\bibitem{RaRiVi}
J.~A. Ram\'{\i}rez, B.~Rider, and B.~Vir\'{a}g.
\newblock Beta ensembles, stochastic {A}iry spectrum, and a diffusion.
\newblock {\em J. Amer. Math. Soc.}, 24(4):919--944, 2011.

\bibitem{SaTo}
E.~B. Saff and V.~Totik.
\newblock {\em Logarithmic potentials with external fields}, volume 316 of {\em
  Grundlehren der mathematischen Wissenschaften [Fundamental Principles of
  Mathematical Sciences]}.
\newblock Springer-Verlag, Berlin, 1997.
\newblock Appendix B by Thomas Bloom.

\bibitem{SV15}
K.~Schubert and M.~Venker.
\newblock Empirical spacings of unfolded eigenvalues.
\newblock {\em Electron. J. Probab.}, 20:Paper No. 120, 37, 2015.

\bibitem{Seip}
K.~Seip.
\newblock {\em Interpolation and sampling in spaces of analytic functions},
  volume~33 of {\em University Lecture Series}.
\newblock American Mathematical Society, Providence, RI, 2004.

\bibitem{Sh}
M.~Shcherbina.
\newblock Fluctuations of linear eigenvalue statistics of {$\beta$} matrix
  models in the multi-cut regime.
\newblock {\em J. Stat. Phys.}, 151(6):1004--1034, 2013.

\bibitem{Sh14}
M.~Shcherbina.
\newblock Change of variables as a method to study general {$\beta$}-models:
  bulk universality.
\newblock {\em J. Math. Phys.}, 55(4):043504, 23, 2014.

\bibitem{ShSe}
Z.~Shen and K.~Serkh.
\newblock On the evaluation of the eigendecomposition of the airy integral
  operator.
\newblock {\em Appl. Comput. Harmon. Anal.}, 57:105--150, 2022.

\bibitem{So}
A.~B. Soshnikov.
\newblock Gaussian fluctuation for the number of particles in {A}iry, {B}essel,
  sine, and other determinantal random point fields.
\newblock {\em J. Statist. Phys.}, 100(3-4):491--522, 2000.

\bibitem{SoWo}
P.~Sosoe and P.~Wong.
\newblock Local semicircle law in the bulk for {G}aussian {$\beta$}-ensemble.
\newblock {\em J. Stat. Phys.}, 148(2):204--232, 2012.

\bibitem{Sz}
G.~Szeg\H{o}.
\newblock {\em Orthogonal polynomials}.
\newblock American Mathematical Society Colloquium Publications, Vol. XXIII.
  American Mathematical Society, Providence, R.I., fourth edition, 1975.

\bibitem{Th}
E.~Thoma.
\newblock Overcrowding and separation estimates for the coulomb gas.
\newblock {\em Comm. Pure Appl. Math.}, 77(7):3227--3276, 2024.

\bibitem{TiRiKa}
P.~Tian, R.~Riser, and E.~Kanzieper.
\newblock Statistics of local level spacings in single- and many-body quantum
  chaos.
\newblock {\em Phys. Rev. Lett.}, 132:220401, May 2024.

\bibitem{Tr}
K.~D. Trinh.
\newblock Global spectrum fluctuations for {G}aussian beta ensembles: a
  {M}artingale approach.
\newblock {\em J. Theoret. Probab.}, 32(3):1420--1437, 2019.

\bibitem{VaVi}
B.~Valk\'{o} and B.~Vir\'{a}g.
\newblock Continuum limits of random matrices and the {B}rownian carousel.
\newblock {\em Invent. Math.}, 177(3):463--508, 2009.

\bibitem{Vi}
J.~P. Vinson.
\newblock {\em Closest spacing of consecutive eigenvalues}.
\newblock PhD thesis, Princeton University, 2001.

\bibitem{MR169015}
H.~Widom.
\newblock Asymptotic behavior of the eigenvalues of certain integral equations.
  {II}.
\newblock {\em Arch. Rational Mech. Anal.}, 17:215--229, 1964.

\bibitem{Wo}
P.~Wong.
\newblock Local semicircle law at the spectral edge for {G}aussian
  {$\beta$}-ensembles.
\newblock {\em Comm. Math. Phys.}, 312(1):251--263, 2012.

\bibitem{Zh}
C.~Zhong.
\newblock Large deviation bounds for the airy point process.
\newblock {\em arXiv preprint: 1910.00797}, 2020.

\end{thebibliography}

\end{document}